\numberwithin{equation}{section}
\newtheorem {theorem}[equation]           {Theorem}
\newtheorem* {theorem*}                   {Theorem}
\newtheorem {lemma}[equation]{Lemma}
\newtheorem {prop}[equation]     {Proposition}
\newtheorem* {prop*}     {Proposition}
\newtheorem {Claim}[equation]     {Claim}
\newtheorem {corollary}[equation]       {Corollary}
\theoremstyle{definition}
\newtheorem {defi}[equation] {Definition}
\newtheorem {Remark} [equation]         {Remark}
\newtheorem* {Example*}    {Example}
\def\R{\mathbb{R}}
\newcommand{\pp}[2]{\frac{\partial#1}{\partial#2}}
\author{Robert Cardona}\address{ Robert Cardona,
Laboratory of Geometry and Dynamical Systems, Department of Mathematics, Universitat Polit\`{e}cnica de Catalunya and BGSMath Barcelona Graduate School of
Mathematics,  Avinguda del Doctor Mara\~{n}on 44-50, 08028 , Barcelona  \it{e-mail: robert.cardona@upc.edu }
 }
 \thanks{The author acknowledges financial support from the Spanish Ministry of Economy and Competitiveness, through the Mar\'ia de Maeztu Programme for Units of Excellence in R\&D (MDM-2014-0445) via an FPI grant. The author is also supported by the grant MTM2015-69135-P/FEDER  and AGAUR grant 2017SGR932.}
\title{Steady Euler flows and Beltrami fields in high dimensions}
\begin{document}

\begin{abstract}
Using open books, we prove the existence of a non-vanishing steady solution to the Euler equations for some metric in every homotopy class of non-vanishing vector fields of any odd dimensional manifold. As a corollary, any such field can be realized in an invariant submanifold of a contact Reeb field on a sphere of high dimension. The constructed solutions are geodesible and hence of Beltrami type, and can be modified to obtain chaotic fluids. We characterize Beltrami fields in odd dimensions and show that there always exist volume-preserving Beltrami fields which are neither geodesible nor Euler flows for any metric. This contrasts with the three dimensional case, where every volume-preserving Beltrami field is a steady Euler flow for some metric. Finally, we construct a non-vanishing Beltrami field (which is not necessarily volume-preserving) without periodic orbits in every manifold of odd dimension greater than three.
\end{abstract}

\maketitle

\section{Introduction}
The Euler equations, which model the dynamics of an inviscid and incompressible fluid, are written for a general Riemannian manifold $(M,g)$ as
\begin{equation*}
\begin{cases}
\partial_t X + \nabla_X X &= -\nabla p \\
\operatorname{div}X=0
\end{cases}
\end{equation*}
where $X$ is the velocity of the fluid, the scalar function $p$ is the pressure, and the differential operators are taken with respect to the metric $g$. A topological and geometric approach happens to be very enriching for the study of steady Euler solutions (cf. \cite{AK} for a monograph on Topological Hydrodynamics). A lot of work has been done to understand such steady flows in dimension $3$ and its geometric properties, but higher dimensions are much less studied. Recently, the interest on high dimensional Euler flows has flourished both in the context of the analysis of partial differential equations \cite{T1} and in the geometrical context of steady solutions \cite{CMPP,PRT}. The aim of this paper is to study three kinds of non-vanishing vector fields and its interactions in high dimensions: steady solutions to the Euler equations, geodesible fields and Beltrami fields. Recall that a geodesible vector field is a vector field such that its orbits are geodesics for some metric. In the other hand, we define Beltrami fields in odd dimensions as vector fields which are parallel to their curl for some metric. For the study of steady solutions, the Bernoulli function plays an important role: this function is defined as $B=p+\frac{1}{2}g(X,X)$, where $p$ is the pressure, the vector field $X$ is the velocity of the fluid and $g$ is the Riemannian metric.

We start introducing a geometric structure, that we call stable Eulerisable structure, which uniquely determines a vector field, and coincides with stable Hamiltonian structures in dimension three. This vector field is a unit length geodesible volume-preserving field and we can check that such structures provide an equivalent geometric formulation (in terms of differential forms) for the study of such fields. Volume-preserving geodesible fields are in correspondence with vector fields that are solutions to the steady Euler equations for some metric and constant Bernoulli function. It follows that vector fields defined by stable Eulerisable structures are steady Euler flows of this type. This viewpoint unveils their geometric wealth and allows us to naturally import topological techniques from the contact and stable Hamiltonian world. We show that one can construct stable Eulerisable structures supported by open books, and use it to prove the existence of such structures in every homotopy class of non-vanishing vector fields of any odd dimensional manifold. In particular, by the mentioned relation with steady Euler flows, we deduce the following result.

\begin{theorem}\label{Eulerex}
Given an odd dimensional manifold and a homotopy class of non-vanishing vector fields, there exist a metric and a vector field in the given class that is a steady solution to the Euler equations with constant Bernoulli function.
\end{theorem}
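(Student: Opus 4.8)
Given the constructions described above, the plan is essentially to chain two ingredients together, so the proof of Theorem~\ref{Eulerex} itself is short; the substance lies in the two ingredients. The first ingredient is the geometric correspondence recalled above: a non-vanishing vector field that is volume-preserving and geodesible is automatically a steady Euler flow for some metric, with constant Bernoulli function. Indeed, if $g$ is a metric for which a suitable positive reparametrisation of $X$ has unit length with orbits that are $g$-geodesics, and for which moreover $\operatorname{div}_g X = 0$, then $\nabla_X X = 0$; hence $\partial_t X + \nabla_X X = -\nabla p$ holds with $p$ constant and $\operatorname{div}X = 0$, and the Bernoulli function $B = p + \tfrac12 g(X,X)$ is constant. (That a single metric can be chosen to satisfy all these conditions for any volume-preserving geodesible field is exactly the correspondence stated above; it amounts to a reparametrisation together with an adjustment of the transverse part of the metric, and I would not repeat the argument.) So it suffices to realise, in the prescribed homotopy class, a non-vanishing volume-preserving geodesible vector field.

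The second ingredient supplies such a field: the vector field determined by a stable Eulerisable structure is by construction of unit length, volume-preserving and geodesible. Thus Theorem~\ref{Eulerex} follows at once from the statement, obtained above, that an open book can be equipped with a stable Eulerisable structure whose underlying field lies in any prescribed homotopy class of non-vanishing vector fields. To reconstruct that statement the plan is: (i) recall that every closed odd-dimensional manifold carries an open book decomposition; (ii) place on it a stable Eulerisable structure whose Reeb-type field is positively transverse to the pages away from the binding and tangent to the binding, thereby fixing a definite homotopy class $[R_0]$; (iii) modify the open book so as to move $[R_0]$ to the target class.

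The main obstacle is step (iii). By obstruction theory one may first homotope the target field so that it agrees with $R_0$ outside an embedded ball; the two fields then differ by an element of a homotopy group of spheres, namely $\pi_{\dim M}(S^{\dim M-1})$ — which is $\Z$ when $\dim M = 3$ and $\Z/2$ in higher dimensions — and if $M$ is not simply connected there may be a further discrepancy on lower skeleta to be corrected first. One then wants to realise each such element by a modification of the open book that is localised in a ball and is compatible with the stable Eulerisable structure: a higher-dimensional analogue of a Lutz twist carried out near the binding, or a connected sum with an open book on a sphere carrying a suitably twisted Reeb field. The delicate points — and where I expect the real work to be — are verifying that this local surgery can be performed while preserving every defining condition of a stable Eulerisable structure, and that it changes the homotopy invariant by exactly the prescribed amount. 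Once this is in hand one iterates to kill the discrepancy, and the correspondence of the first paragraph then turns the resulting field into a steady Euler solution with constant Bernoulli function, as required.
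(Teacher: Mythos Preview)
Your first ingredient --- the correspondence between volume-preserving geodesible fields and steady Euler flows with constant Bernoulli function --- is exactly what the paper uses (Proposition~\ref{corr}), and your reduction of Theorem~\ref{Eulerex} to the existence of a stable Eulerisable structure in every homotopy class of non-vanishing vector fields is correct.

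The divergence, and the genuine gap, is in how one reaches every homotopy class. You propose to build a single stable Eulerisable structure on an open book and then perform local Lutz-type modifications to adjust the homotopy class. But step~(iii) is not carried out: you acknowledge that the local modification must simultaneously preserve all the defining conditions of a stable Eulerisable structure and realise a prescribed change in the homotopy invariant, and you exhibit no candidate model. For non-simply-connected $M$ you mention but do not handle the lower-skeletal obstructions; even in the simply-connected case, a higher-dimensional Lutz twist for stable Eulerisable structures (as opposed to contact forms) is not available off the shelf, and its effect on the homotopy class would have to be computed from scratch. As written, the proposal reduces the theorem to a problem that is not obviously easier than the original.

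The paper bypasses this entirely via a different mechanism. It argues by induction on the dimension, using Quinn's theorem that every closed manifold of odd dimension $\geq 5$ admits an open book $(B,\pi)$ with \emph{almost canonical} pages (handlebodies with handles of index at most $n$), together with a result of Etnyre (Theorem~\ref{et}): for such open books the natural map $H$ from homotopy classes of hyperplane fields on the binding $B$ to homotopy classes on $M$ is \emph{surjective}. Thus any target class on $M$ is $H(\xi)$ for some class $\xi$ on $B^{2n-1}$; by the inductive hypothesis $B$ carries a stable Eulerisable structure $(\beta,\nu')$ with $\ker\beta\in\xi$, and the paper extends it explicitly across the open book to a stable Eulerisable structure on $M$ in the desired class. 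The base case is Martinet--Lutz in dimension three. No local surgery or obstruction-theoretic bookkeeping is needed: the homotopy class is controlled globally, through the choice on the binding, rather than corrected locally after the fact.
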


The constructed solutions are geodesible and hence of Beltrami type, but are not a reparametrized Reeb field of a contact form. A more geometric interpretation of this existence theorem by saying that an odd dimensional manifold can be foliated by geodesics of some metric in any homotopy class, and furthermore the vector field whose orbits are the geodesics preserves the Riemannian volume. By means of a local modification of the constructed solutions we exhibit Euler flows that are chaotic, in the sense that there is a compact invariant set with positive topological entropy. Another consequence, which follows from results in \cite{CMPP}, is the fact  that any homotopy class of vector fields of every odd dimensional manifold can be realized in the invariant submanifold of a Reeb field in a standard contact sphere of higher dimension (see the last paragraph of Section \ref{sec:euler} for a precise statement).

In three dimensions, a very fruitful source of examples of steady Euler flows are volume-preserving Beltrami fields: volume-preserving vector fields which are parallel to their curl (for some metric). The correspondence between geodesible and Beltrami fields in three manifolds shows that steady Euler flows with constant Bernoulli function are equivalent to volume-preserving Beltrami fields. The study of Beltrami flows and its properties in high odd dimensions was already proposed in \cite{GK}, where it is proved that non integrable analytic examples of steady flows are always Beltrami fields. It is mentioned that it would be interesting to construct examples and compare its properties to the three dimensional case, we do so in this work. In fact, the constructions that lead to Theorem \ref{Eulerex}, provide a lot of examples of such Beltrami type steady Euler flows (cf. Section \ref{sec:euler}).

In the high dimensional setting the correspondence between geodesible and Beltrami fields is broken: any geodesible field is Beltrami but the converse is not true. We give a characterization of Beltrami fields and provide a construction, which uses plugs and can be done volume-preserving, of vector fields that are parallel to their curl for some metric but are not geodesible. This yields also examples of volume-preserving Beltrami fields which are not solutions to the Euler equations for any metric (i.e. it is not Eulerisable): this is highly in contrast with the situation for $3$-manifolds.

\begin{theorem}\label{nongeod}
In every manifold of dimension $2n+1>3$ and every homotopy class of non-vanishing vector fields, there is a volume-preserving Beltrami field which is not geodesible nor a solution to the Euler equations for any metric.
\end{theorem}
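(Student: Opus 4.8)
\emph{Overall strategy.} I would obtain $X$ by surgering a plug into a steady Euler flow furnished by Theorem~\ref{Eulerex}. Fix a homotopy class of non-vanishing vector fields and let $Y$ be a representative which is a steady Euler flow with constant Bernoulli function (Theorem~\ref{Eulerex}); then $Y$ is volume-preserving, geodesible and hence Beltrami. In a flow box $U\cong D^{2n}\times[0,1]$ on which $Y=\partial_t$ I replace $Y|_U$ by a vector field $W$ which equals $\partial_t$ near $\partial(D^{2n}\times[0,1])$, is divergence-free, carries a $1$-form certifying that it is Beltrami, and has a forward-trapped orbit (a point whose positive semiorbit stays in the plug forever). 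Gluing produces a non-vanishing volume-preserving field $X$; the surgery, being supported in a ball and keeping the field non-vanishing, changes the homotopy class only by a fixed class in $\pi_{2n+1}(S^{2n})$, which can be cancelled with one more compensating insertion (or by starting from a suitably shifted representative), so $X$ lies in the prescribed class. The trapped orbit will then obstruct both geodesibility and Eulerisability, while the plug keeps $X$ Beltrami.

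\emph{The plug.} Meeting all three demands at once is the delicate point. A ``closed'' Beltrami certificate is impossible: a $1$-form $\alpha_P$ with $\alpha_P(W)>0$ and $\diff\alpha_P=0$ on the contractible plug would be $\diff h$, and $h$ would have to increase without bound along the trapped orbit inside the compact plug. Instead I use that $2n+1\ge 5$: take $P=P_0\times D^{2n-2}$ with $P_0\cong D^2\times[0,1]$ a volume-preserving $3$-dimensional plug with a forward-trapped orbit whose limit set is interior (such plugs can be produced from volume-preserving Kuperberg-type constructions), let $W_0$ be its field, $t$ the $[0,1]$-coordinate, $z$ the $D^{2n-2}$-coordinate, and set $W=\rho(z)W_0+(1-\rho(z))\partial_t$ with $\rho$ equal to $1$ away from $\partial D^{2n-2}$ and $0$ near it. Then $W=\partial_t$ near $\partial P$, $W$ is divergence-free (as $W_0$ is and does not differentiate $\rho$), and $W$ still traps an orbit along $P_0\times\{z_0\}$ for $z_0$ where $\rho\equiv1$. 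For the certificate take $\alpha_P=\alpha_0+c\,\diff t$ with $\alpha_0$ pulled back from $P_0$, $\alpha_0(W_0)>0$, and $c$ small: then $\diff\alpha_P=\diff\alpha_0$ is pulled back from a $3$-manifold, hence has rank $\le 2$ everywhere, so $(\diff\alpha_P)^n=0$ since $n\ge2$; thus for any metric $g_W$ with $\iota_W g_W=\alpha_P$ one has $\operatorname{curl}W$ trivially parallel to $W$. One then checks $\alpha_P(W)>0$ on all of $P$ after arranging in the construction of $P_0$ that $W_0$ is never a negative multiple of $\partial_t$ and $\alpha_0(\partial_t)\ge0$. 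Gluing $g_W$ to a metric realizing the Beltrami structure of $Y$ on $M\setminus P$ — having chosen $U$ metrically standard near $\partial U$, where both forms are $\diff t$ — gives a metric on $M$ for which $X$ is Beltrami.

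\emph{Ruling out geodesibility and Euler.} As $X$ is volume-preserving, geodesibility would make it a steady Euler flow with constant Bernoulli function (by the correspondence recalled above), so it is enough to show $X$ is not Eulerisable. If $\iota_X\diff\alpha=-\diff B$ with $\alpha=\iota_X g$, $\alpha(X)>0$ and $\operatorname{div}X=0$, then $B$ is a first integral; the trapped orbit has nonempty compact $\omega$-limit set $\Omega\subset P$ on which $B$ is constant, equal to some $b$, and any invariant probability measure supported in $\Omega$ has vanishing Schwartzman asymptotic cycle because its support lies in a ball. When $B\equiv b$ on $M$ (the geodesible case) this contradicts a Sullivan-type homological criterion for volume-preserving geodesible fields, via an orbit-cylinder argument inside the contractible plug; when $B$ is non-constant one restricts to the invariant hypersurface $\{B=b\}$ (or a nearby regular level), where $X$ again induces a volume-preserving geodesible field carrying a homologically trivial invariant measure, and one concludes by the same argument one dimension down.

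\emph{Main obstacle.} The technical core is, first, the simultaneous construction of a volume-preserving plug that is genuinely Beltrami while carrying the trapping dynamics and matching the product ends — in particular keeping $\alpha_P(W)>0$ near the interior limit set, where $W$ has vanishing $\partial_t$-component — and, second, turning ``the trapped orbit supports a homologically trivial invariant measure'' into a clean contradiction with geodesibility (the Sullivan-type input) and then reducing the general steady Euler case to it.
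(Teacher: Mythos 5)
Your overall strategy is the one the paper follows: start from the geodesible volume-preserving field of Theorem~\ref{Eulerex}, insert a volume-preserving Wilson-type plug in a flow box, certify the Beltrami property via a one-form $\alpha_P$ with $\alpha_P(X)>0$ and $\iota_X(\diff\alpha_P)^n=0$ whose differential has rank $\le 2$, and invoke the fact that fields admitting plugs are neither geodesible nor Eulerisable. Your observation that a closed certificate is impossible on the plug is correct and is exactly why the rank condition, rather than $\diff\alpha_P=0$, is the right thing to aim for. For the last step you can simply cite the obstruction theorems of Rechtman and Peralta-Salas--Rechtman--Torres de Lizaur (plugs are not geodesible, resp.\ not Eulerisable, in any dimension), as the paper does; your Schwartzman-cycle sketch is the idea behind those results, but your reduction of the non-constant Bernoulli case to a level set is not an argument you have actually carried out, and it is unnecessary.

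The genuine gap is in the gluing of the certificate form on your product plug $P=P_0\times D^{2n-2}$. You take $\alpha_P=\alpha_0+c\,\diff t$ with $\alpha_0$ pulled back from the $3$-dimensional factor $P_0$. Near $\partial P_0\times D^{2n-2}$ this can be arranged to equal $\diff t$, but near $P_0\times\partial D^{2n-2}$ it equals $\alpha_0+c\,\diff t\neq\diff t$, while the ambient connection form is $\diff t$ there; so $\alpha_P$ does not extend to a global one-form and Lemma~\ref{beltchar} cannot be applied. The natural repair, interpolating $\alpha_P=\rho(z)\alpha_0+(1-\rho(z))\diff t$ with the same cutoff $\rho$ you use for the vector field, introduces the term $\diff\rho\wedge(\alpha_0-\diff t)$ in $\diff\alpha_P$, and then $(\diff\alpha_P)^2$ acquires the cross term $2\rho\,\diff\rho\wedge(\alpha_0-\diff t)\wedge\diff\alpha_0$, whose contraction with $W$ does not vanish in general; this kills the argument precisely in dimension $5$ ($n=2$), while for $n\ge 3$ the cube still vanishes for degree reasons. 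The paper sidesteps this entirely by taking the plug to be $T^{d-2}\times[1,2]\times[-1,1]$ (a closed torus factor, no boundary to cut off against) with the explicit form $\alpha_P=f\,\diff\theta_1+h_1\,\diff z$, whose coefficient functions are the plug data themselves and automatically satisfy $f=0$, $h_1=1$ near the whole boundary, so that $\alpha_P=\diff z$ there and $\diff\alpha_P$ involves only $\diff z,\diff r,\diff\theta_1$, giving $(\diff\alpha_P)^2=0$ on the nose. You should either switch to a torus factor or tie the certificate form to the plug's defining functions as the paper does.
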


It was proved \cite{HT,R2} that, except in a torus bundle over $S^1$, every Beltrami field in a three manifold which is either analytic or volume-preserving has a periodic orbit. This shows that Reeb fields of stable Eulerisable structures satisfy the Weinstein conjecture in dimension three except in torus bundles over the circle. We give examples of manifolds in every dimension that admit aperiodic Reeb fields of stable Eulerisable structures, which generalize the torus bundle over $S^1$ counterexample. Even if the construction in Theorem \ref{nongeod} is done using plugs, it does not directly imply that volume-preserving Beltrami fields can be aperiodic. This is because the plug cannot be used arbitrarily: one needs to find points where the geometric information of the Beltrami field has a specific normal form.

It is natural to ask if for the class of Beltrami fields one can find examples without periodic orbits. Taking into account the mentioned constrains to insert plugs, we construct aperiodic Beltrami fields (not necessarily volume-preserving) in high dimensions. Hence the existence of periodic orbits remains open only for three dimensional smooth Beltrami fields.

\begin{theorem}\label{aperio}
Let $M$ be a closed manifold of dimensions $2n+1>3$. Then $M$ admits a (not necessarily volume-preserving) Beltrami vector field without periodic orbits.
\end{theorem}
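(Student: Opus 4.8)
By the characterization of Beltrami fields it is enough to produce on $M$ a nowhere-zero vector field $X$ \emph{with no periodic orbit}, together with a $1$-form $\alpha$ satisfying $\alpha(X)>0$ and $\iota_Xd\alpha=0$ (such a pair is automatically Beltrami for a suitable metric). I would start from a nowhere-zero geodesible field $X_0$ on $M$ with defining $1$-form $\alpha_0$, so $\alpha_0(X_0)=1$ and $\iota_{X_0}d\alpha_0=0$, provided by the constructions behind Theorem~\ref{Eulerex}. By choosing the supporting open book appropriately (a monodromy with finitely many periodic points, plus an inductive control of the dynamics along the binding) I would also arrange that $X_0$ has only finitely many periodic orbits $\gamma_1,\dots,\gamma_k$; this point has to be checked carefully, but I expect it to be softer than the plug construction below.

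\textbf{Normal form near the periodic orbits.} Pick pairwise disjoint flow boxes $U_i\cong D^{2n}\times[-1,1]$ (coordinates $(z,t)$) with $X_0=\partial_t$ and $\gamma_i=\{z=0\}$. Decomposing $\alpha_0=f\,dt+\lambda$ on $U_i$ with $f=\alpha_0(\partial_t)>0$ and $\lambda$ without $dt$-component, the identity $\iota_{\partial_t}d\alpha_0=0$ is equivalent to $\partial_t\lambda=d_zf$ (fibrewise differential), which integrates to $\alpha_0=dF+\pi^*\lambda_0$ with $F(z,t)=\int_{-1}^tf(z,s)\,ds$, $\partial_tF>0$, $\pi\colon U_i\to D^{2n}$ the projection and $\lambda_0=\lambda|_{t=-1}$. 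I then replace $\alpha_0$ on $U_i$ by $dF+\chi(z)\,\pi^*\lambda_0$, where $\chi$ depends on $z$ only, is $1$ near $\partial D^{2n}$ and $0$ on $D^{2n}_{1/2}$: the differential of this form has no $dt$-component, so $\iota_{X_0}d\alpha_0=0$ and $\alpha_0(X_0)>0$ persist globally, the form is unchanged near $\partial U_i$, and after the coordinate change $t\mapsto F(z,t)$ and a harmless reparametrization of $X_0$ I obtain $X_0=\partial_t$ and $\alpha_0=dt$ on a smaller flow box $U_i'$ meeting $\gamma_i$ along its core. This alters $\alpha_0$, hence the metric and the volume it induces; this is exactly the freedom unavailable in the volume-preserving setting, and it is what lets us realize the normal form required to splice in a plug.

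\textbf{A Beltrami aperiodic plug.} Next I would build a plug $(W,X_W,\alpha_W)$, $W=D^{2n}\times[-1,1]$, with: $\alpha_W(X_W)>0$ and $\iota_{X_W}d\alpha_W=0$ on $W$; $X_W=\partial_t$, $\alpha_W=dt$ near $\partial W$; $X_W$ entry--exit matched (an orbit traversing $W$ leaves with the same $D^{2n}$-coordinate it entered); the orbit entering at the core $(0,-1)$ trapped in $W$; and $X_W$ without periodic orbits. The underlying field is a Wilson plug, which is aperiodic because $\dim W=2n+1\ge5$ leaves room for an invariant $2$-torus carrying a linear irrational flow onto which the trapped orbits spiral. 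The work is to carry a compatible $1$-form along: on the invariant torus one takes a scaled constant closed $1$-form with $\alpha_W(X_W)\equiv1$, and one extends it over a tubular neighborhood and then over the rest of $W$ keeping $\iota_{X_W}d\alpha_W=0$, with $\alpha_W=dt$ near $\partial W$. Note that $\alpha_W$ cannot be closed on all of $W$: a primitive of a closed $\alpha_W$ would be strictly increasing along the orbits of $X_W$ (since $\alpha_W(X_W)>0$), contradicting the existence of a trapped orbit; so $d\alpha_W\neq0$ near the torus, consistently with the plug destroying the volume. Constructing this plug, together with $\alpha_W$, will be the main obstacle.

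\textbf{Conclusion.} I then splice a copy of $(W,X_W,\alpha_W)$ into each $U_i'$ in place of $(\partial_t,dt)$, obtaining $(X,\alpha)$; $\alpha$ is a smooth global $1$-form, the data matching along $\partial U_i'$, and it satisfies $\alpha(X)>0$ and $\iota_Xd\alpha=0$, so $X$ is Beltrami. If $X$ had a periodic orbit $\delta$, it could not lie inside one $W$ (aperiodicity of the plug); if it entered some $U_i'$ it would have to leave (otherwise it is trapped, hence not periodic), and the matching condition would then identify $\delta$ outside $U_i'$ with a periodic orbit of $X_0$ meeting $U_i'$, necessarily $\gamma_i$ --- but $\gamma_i$ enters $U_i'$ along the core $\{z=0\}$, whose $X_W$-orbit is trapped in $W$, so $\delta$ would be trapped, a contradiction; and if $\delta$ avoided every $U_i'$ it would be a periodic orbit of $X_0$ distinct from each $\gamma_i$, impossible. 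Hence $X$ is a nowhere-zero Beltrami field on $M$ without periodic orbits (and it preserves no volume, in accordance with the failure of the analogous volume-preserving statement on general manifolds).
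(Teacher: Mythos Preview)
Your plan contains a genuine obstruction at the plug step. Throughout you aim for the condition $\iota_X d\alpha=0$, but this is the \emph{geodesibility} condition; Lemma~\ref{beltchar} only requires the weaker $\iota_X(d\alpha)^n=0$ for $X$ to be Beltrami. The distinction is fatal here: the ``Beltrami aperiodic plug'' you propose, with $\alpha_W(X_W)>0$, $\iota_{X_W}d\alpha_W=0$ on $W$, and $(\alpha_W,X_W)=(dt,\partial_t)$ near $\partial W$, cannot exist. Were such a plug available, your own normal-form argument would let you splice it into \emph{any} geodesible field on a closed manifold, yielding a geodesible field that contains a plug --- contradicting the theorem, cited in the paper from \cite{R1,PRT}, that plugs are never geodesible. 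Concretely, the invariant torus inside the Wilson plug carries a nontrivial foliation cycle which is exact (the plug is contractible), and Sullivan's criterion \cite{Sul} rules this out for geodesible flows. Your observation that $\alpha_W$ cannot be closed is correct but much too weak: it is the full condition $\iota_{X_W}d\alpha_W=0$ that is obstructed, not merely $d\alpha_W=0$.

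The paper circumvents this by exploiting the genuinely weaker Beltrami condition. Instead of an open-book geodesible field it starts from a round Morse function $f$ (available in every dimension $\geq 4$ by Asimov--Miyoshi), modifies the gradient near each critical circle to obtain a nonvanishing field $X$ with finitely many periodic orbits, and builds an accompanying one-form $\alpha$ satisfying $(d\alpha)^2=0$ everywhere. This automatically gives $\iota_X(d\alpha)^n=0$ for $n\geq 2$ without ever asking that $\iota_X d\alpha=0$. After a further local adjustment one reaches $(\alpha,X)=(dz,\partial_z)$ near a point on each periodic orbit, and the plug inserted there (as in Theorem~\ref{main2}) carries a form $\alpha_P$ with $(d\alpha_P)^2=0$ but $\iota_{X_P}d\alpha_P\neq 0$. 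The resulting aperiodic field is Beltrami --- in fact irrotational --- but, consistently with the obstruction above, it is not geodesible.
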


\paragraph{\textbf{Plan of the paper.}}

In Section \ref{sec:pre} we review preliminary definitions and the state of the art of the three dimensional case. In Section \ref{sec:euler}, we define stable Eulerisable structures in order to study steady Euler flows of Beltrami type. We prove an existence theorem, deducing Theorem \ref{Eulerex} and discuss its implications. In Section \ref{sec:bel} we proceed to the study of Beltrami fields, characterizing them and proving that there are Beltrami volume-preserving vector fields which are not geodesible nor Euler flows for any metric. Finally, in Section \ref{sec:Wei}, we discuss existence of periodic orbits and construct aperiodic Beltrami fields in every manifold of dimension $2n+1>3$.

\textbf{Acknowledgements:} The author is very grateful to his advisor Eva Miranda and to Fran Presas for useful discussions and encouraging this work. The author is indebted to Daniel Peralta-Salas, who read a first draft of this work and provided insightful suggestions. Finally, the author thanks Francisco Torres de Lizaur for useful comments on a final version of the article.
\section{Eulerisable, Beltrami and geodesible fields} \label{sec:pre}
In this section we provide some background on Euler flows and other related classes of vector fields. We will first define the different kinds of vector fields and then explain their interactions. Through the rest of sections, other specific background will be introduced. If not specified, we will refer through all the paper to vector fields, flows or solutions to the Euler equations which are non-vanishing.

When we consider steady solutions to the Euler equations, one can give a reformulation of the equations using differential forms. If we denote $\alpha$ the form dual to $X$ by the metric, i.e. $\alpha= g(X,\cdot)$, and $\mu$ the Riemannian volume, the equations can be written 
\begin{equation*}
\begin{cases}
\iota_X d\alpha = -dB \\
d\iota_X\mu=0
\end{cases},
\end{equation*}
where $B:= p + \frac{1}{2}g(X,X)$ is the Bernoulli function. The second equation imposes that the vector field is volume-preserving. The equations make sense in any manifold, but we will only work on odd dimensional manifolds. Note that one needs to fix the metric to look for solutions, however one can study vector fields which are solutions to the Euler equations for some metric. Following \cite{PRT}, we introduce the notion of Eulerisable field.

\begin{defi}
Let $M$ be manifold with a volume form $\mu$. A volume-preserving vector field $X$ is \textbf{Eulerisable} if there is a metric $g$ on $M$ for which $X$ satisfies the Euler equations for some Bernoulli function $B:M \rightarrow \R$.
\end{defi}

When the preserved volume is not the Riemannian one, the equations describe the behavior of an ideal barotropic fluid. However given an Eulerisable field one can always construct a metric such that the Riemannian volume is $\mu$. In \cite{PRT} a characterization \textit{\`a la Sullivan} is given for these vector fields. However, we will only use the following simpler characterization.

\begin{lemma}[\cite{PRT}] \label{Eulerchar}
A non-vanishing volume-preserving vector field $X$ in $M$ is Eulerisable if and only if there exists a one form $\alpha$ such that $\alpha(X)>0$ and $\iota_Xd\alpha$ is exact.
\end{lemma}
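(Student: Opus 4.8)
Since the statement is an equivalence I would prove the two implications separately. The forward implication is essentially immediate: if $X$ is Eulerisable, pick a metric $g$ and a Bernoulli function $B$ for which the Euler equations hold and set $\alpha = g(X,\cdot)$. Then $\alpha(X) = g(X,X) > 0$ because $X$ does not vanish, and the first Euler equation reads exactly $\iota_X d\alpha = -dB$, so $\iota_X d\alpha$ is exact. This direction uses neither the incompressibility condition nor the ambient volume form.

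For the converse, suppose $\alpha$ satisfies $\alpha(X) > 0$ and $\iota_X d\alpha = dh$ with $h \in C^\infty(M)$. The plan is to construct a metric $g$ for which $\alpha$ is the one-form metrically dual to $X$ and whose Riemannian volume is the given $\mu$; then the first Euler equation holds with Bernoulli function $B = -h$, and the second holds because $X$ preserves $\mu = \mu_g$ by hypothesis, so $X$ is Eulerisable. To build $g$, write $TM = \langle X \rangle \oplus \xi$ with $\xi = \ker\alpha$, declare the splitting orthogonal, and set $g(X,X) = \alpha(X)$; any metric of this form satisfies $g(X,\cdot) = \alpha$ since the two sides agree on $X$ and both annihilate $\xi$. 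The remaining freedom is the choice of metric along $\xi$, and I would use it to pin down the volume. First one observes that the restriction of $\iota_X\mu$ to $\xi$ is a nowhere-zero volume form on $\xi$, which is immediate from $\iota_X\iota_X\mu = 0$ together with $\alpha(X)\neq 0$. Computing the Riemannian volume in an orthonormal coframe adapted to the splitting gives $\mu_g = \alpha(X)^{-1/2}\,\alpha\wedge\nu$, where $\nu$ denotes the volume form of $g|_\xi$; on the other hand the algebraic identity $\alpha\wedge\iota_X\mu = \alpha(X)\,\mu$ shows that $\mu_g = \mu$ if and only if $\nu$ and $\alpha(X)^{-1/2}\iota_X\mu$ agree on $\xi$. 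Since any prescribed positive volume form on a rank-$2n$ bundle is the volume form of some metric — concretely, conformally rescale an arbitrary metric on $\xi$ by a suitable smooth factor — such a choice exists globally, which finishes the construction.

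The only delicate point is this last step: forcing $g(X,\cdot) = \alpha$ and $\mu_g = \mu$ simultaneously. Its resolution rests on the fact that $\iota_X\mu$ restricts to a genuine volume form on $\ker\alpha$ and that the leftover conformal freedom in the metric along $\ker\alpha$ is exactly what is needed to reconcile that form with the normalization forced by $\alpha(X)$. I would double-check the constant $\alpha(X)^{-1/2}$ and the sign conventions carefully, but there is no topological obstruction anywhere, so the argument works verbatim on every odd-dimensional manifold.
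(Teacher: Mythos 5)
Your argument is correct, and it is essentially the intended one: the paper does not prove Lemma \ref{Eulerchar} itself (it is quoted from \cite{PRT}), but your metric construction is exactly the paper's auxiliary Lemma \ref{metric} (orthogonal splitting $TM=\langle X\rangle\oplus\ker\alpha$, $g(X,\cdot)=\alpha$, conformal rescaling on $\ker\alpha$ to fix the volume), and the way you then read off the Euler equations with $B=-h$ mirrors the proof of Proposition \ref{corr}. The volume bookkeeping via $\alpha\wedge\iota_X\mu=\alpha(X)\,\mu$ is also right, so there is nothing to fix.
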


Observe that if the Bernoulli function is constant, then the first Euler equation reads $\iota_X d\alpha=0$.

\begin{defi}
A vector field $X$ is \textbf{geodesible} if there exists a metric $g$ such that its orbits are geodesics.
\end{defi}

By a characterization of Gluck \cite{Gl}, a vector field is geodesible if and only if there is a one form $\alpha$ such that $\alpha(X)>0$ and $\iota_Xd\alpha=0$. When we further have $\alpha(X)=1$, then the vector field is geodesible of unit length. We might refer to $\alpha$ as the connection one form. 

The last kind of vector fields that we are interested in are Beltrami fields. Recall that for a given vector field $X$ in a Riemannian manifold $(M,g)$ of odd dimension $2n+1$, we define its curl as the only vector field $Y$ satisfying the equation
$$ \iota_Y \mu = (d\alpha)^n, $$
where $\mu$ is the Riemannian volume and $\alpha:= \iota_X g$. 
\begin{defi}
A vector field $X$ in a Riemannian  manifold of odd dimension $(M,g)$ is \textbf{Beltrami} for that metric if it is everywhere parallel to its curl, i.e.  we have $Y=fX$ for some function $f\in C^\infty(M)$.
\end{defi}

Abusing notation, we will say that a vector field is Beltrami if it is Beltrami for some metric $g$ (one can also speak of a vector field being Beltramisable). The interactions between geodesible, Beltrami and Eulerisable vector fields in three dimensions are very well understood. It was already observed in \cite{R1} that geodesible and Beltrami fields are in correspondence.

\begin{prop*}
Being geodesible and Beltrami is equivalent in three dimensions.
\end{prop*}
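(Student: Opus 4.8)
The plan is to deduce the equivalence directly from Gluck's characterization quoted above (a field $X$ is geodesible iff there is a one-form $\alpha$ with $\alpha(X)>0$ and $\iota_X d\alpha=0$) together with the defining equation $\iota_Y\mu=d\alpha$ for the curl $Y$ in dimension three (where $(d\alpha)^n=d\alpha$). The whole argument rests on one low-dimensional fact: for a non-vanishing $X$ on a $3$-manifold with volume form $\mu$, the $2$-form $\iota_X\mu$ is nowhere zero, hence has $1$-dimensional kernel at each point, and since $\iota_X\iota_X\mu=0$ that kernel is exactly $\mathbb{R}X$.

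For the implication geodesible $\Rightarrow$ Beltrami, I would start from a one-form $\alpha$ as in Gluck's criterion and build an explicit metric realizing $\alpha$ as $\iota_X g$: take $g=\tfrac{1}{\alpha(X)}\,\alpha\otimes\alpha+\pi^*h$, where $\pi\colon TM\to\ker\alpha$ is the projection along $X$ and $h$ is any fibre metric on the plane field $\ker\alpha$. One checks $g$ is positive definite (it is block-diagonal with respect to $TM=\mathbb{R}X\oplus\ker\alpha$) and that $g(X,\cdot)=\alpha$. Let $Y$ be the curl for this $g$, so $\iota_Y\mu=d\alpha$. Then $\iota_X\iota_Y\mu=\iota_X d\alpha=0$, and the anticommutation $\iota_X\iota_Y+\iota_Y\iota_X=0$ gives $\iota_Y(\iota_X\mu)=0$; by the kernel fact above this forces $Y=fX$ for a smooth function $f$, so $X$ is Beltrami for $g$.

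For the converse Beltrami $\Rightarrow$ geodesible, suppose $X$ is Beltrami for some $g$, i.e. the curl satisfies $Y=fX$, and put $\alpha=\iota_X g$. Then $\alpha(X)=g(X,X)>0$, and from $\iota_Y\mu=d\alpha$ together with $Y=fX$ we get $d\alpha=f\,\iota_X\mu$, whence $\iota_X d\alpha=f\,\iota_X\iota_X\mu=0$. Gluck's characterization then yields that $X$ is geodesible, completing the equivalence.

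I do not expect a genuine obstacle here: both directions are short once one has Gluck's criterion and the formula for the curl. The only delicate point to state carefully is the dimensional input — that $\iota_X\mu$ has kernel precisely $\mathbb{R}X$, which is what makes ``curl parallel to $X$'' equivalent to $\iota_X d\alpha=0$. In dimension $2n+1>3$ the relevant object $(d\alpha)^n$ has degree $\geq 3$ and no such rigidity holds, which is exactly the phenomenon exploited in the later sections of the paper.
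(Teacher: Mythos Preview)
Your proposal is correct. The paper itself does not give a proof of this proposition; it is stated with attribution to \cite{R1}. However, your argument is exactly the three-dimensional specialization of the paper's Lemma~\ref{beltchar}, which characterizes Beltrami fields by the existence of $\alpha$ with $\alpha(X)>0$ and $\iota_X(d\alpha)^n=0$: for $n=1$ this is verbatim Gluck's geodesibility criterion, and your explicit metric construction is the content of the paper's Lemma~\ref{metric}. So the approach is essentially the same as the one the paper develops (in greater generality) later on.
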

This correspondence leads to another one, covered partially or totally at different points of the literature.
\begin{prop*}
The following classes of vector fields are equivalent in three dimensions:
\begin{enumerate}
\item Vector fields such that for some metric they satisfy the Euler equations with constant Bernoulli function,
\item reparametrizations of Reeb fields of stable Hamiltonian structures.
\item volume-preserving geodesible fields,
\item volume-preserving Beltrami fields.
\end{enumerate}
\end{prop*}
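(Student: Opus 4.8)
The plan is to prove the equivalence by taking class (3), the non-vanishing volume-preserving geodesible fields, as a hub, and establishing $(1)\Leftrightarrow(3)$, $(4)\Leftrightarrow(3)$ and $(2)\Leftrightarrow(3)$ separately; Gluck's criterion ($X$ is geodesible iff there is a one-form $\alpha$ with $\alpha(X)>0$ and $\iota_X d\alpha = 0$) supplies the common differential-form language. I would also record two elementary remarks on positive reparametrizations: if $\iota_X d\alpha = 0$ and $\alpha(X)>0$, then $X/\alpha(X)$ is geodesible with the \emph{same} connection form $\alpha$ and $\alpha(X/\alpha(X)) = 1$; and if $X$ preserves the volume $\mu$ and $h>0$, then $hX$ preserves $h^{-1}\mu$, since $\mathcal{L}_{hX}(h^{-1}\mu) = d\,\iota_X\mu = \mathcal{L}_X\mu = 0$. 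These make the class of volume-preserving geodesible fields coincide with the positive reparametrizations of the unit-length ones.

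$(1)\Leftrightarrow(3)$: If $X$ is a steady Euler flow for a metric $g$ with constant Bernoulli function, then with $\alpha = \iota_X g$ the first Euler equation reads $\iota_X d\alpha = 0$ while $\alpha(X) = g(X,X)>0$, so $X$ is geodesible, and it is volume-preserving by the second equation. Conversely, let $X$ be volume-preserving for $\mu$ and geodesible with connection one-form $\alpha$. Using the splitting $TM = \langle X\rangle \oplus \ker\alpha$, I would build a metric $g$ by declaring $X$ orthogonal to $\ker\alpha$ with $g(X,X) = \alpha(X)$ and equipping the $2$-plane field $\ker\alpha$ with a fibrewise metric whose area form is a suitable positive multiple of the nondegenerate restriction of $\iota_X\mu$ to $\ker\alpha$, chosen so that the Riemannian volume of $g$ equals $\mu$ — possible because in dimension three the $2$-forms annihilated by $X$ are exactly the functional multiples of $\iota_X\mu$. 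Then $\iota_X g = \alpha$, so $\iota_X d\alpha = 0$, and $X$ is volume-preserving for $\mu_g = \mu$, i.e. a steady Euler flow for $g$ with constant Bernoulli function.

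$(4)\Leftrightarrow(3)$: I would invoke the preceding proposition (geodesible $\Leftrightarrow$ Beltrami in dimension three) and only track the volume. In the metric $g$ above, $\operatorname{curl} X$ satisfies $\iota_{\operatorname{curl} X}\mu_g = d\alpha$, and $\iota_X d\alpha = 0$ forces $\operatorname{curl} X$ parallel to $X$, so $X$ is Beltrami for $g$ and volume-preserving for $\mu_g$; conversely, if $X$ is Beltrami for some $g$ (so $\operatorname{curl} X = fX$) and volume-preserving, then $\alpha = \iota_X g$ has $\alpha(X)>0$ and $d\alpha = f\,\iota_X\mu_g$, hence $\iota_X d\alpha = 0$ and $X$ is geodesible. $(2)\Leftrightarrow(3)$: The Reeb field $R$ of a stable Hamiltonian structure $(\lambda,\omega)$ has $\lambda(R) = 1>0$ and, since $\lambda\wedge\omega$ being a volume form forces $\ker\omega = \langle R\rangle \subseteq \ker d\lambda$, also $\iota_R d\lambda = 0$; thus $R$ is geodesible, and $\mathcal{L}_R(\lambda\wedge\omega) = 0$ makes it volume-preserving, so by the reparametrization remarks every reparametrization of $R$ lies in class (3). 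Conversely, given $X$ in class (3) with connection form $\alpha$ and preserved volume $\mu$, the field $X/\alpha(X)$ preserves $\alpha(X)\mu$ and is the Reeb field of $(\alpha,\iota_X\mu)$: closedness of $\iota_X\mu$ is the volume-preservation, $\alpha\wedge\iota_X\mu = \mu>0$, $\ker(\iota_X\mu) = \langle X\rangle \subseteq \ker d\alpha$ since $\iota_X d\alpha = 0$, and $\iota_{X/\alpha(X)}(\iota_X\mu) = 0$ with $\alpha(X/\alpha(X)) = 1$; hence $X$ is a reparametrization of a Reeb field of a stable Hamiltonian structure.

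The step I expect to demand the most care is the metric construction in $(1)\Leftrightarrow(3)$: arranging that the Riemannian volume of $g$ equals the prescribed preserved volume $\mu$ while simultaneously $\iota_X g = \alpha$. Throughout, the other delicate point is bookkeeping — keeping straight which metric and which volume form is active in each implication, and verifying that volume-preservation, geodesibility, the Beltrami condition and the Reeb condition all persist under the reparametrizations used to normalize $\alpha(X)$ to $1$. Everything else reduces to direct applications of Gluck's characterization and the definition of a stable Hamiltonian structure.
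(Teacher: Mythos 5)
Your proposal is correct and follows essentially the same route the paper takes for its higher-dimensional generalizations (Lemma \ref{metric} for the metric with prescribed dual one-form and volume, Proposition \ref{corr} for the Euler/Reeb correspondence, and Lemma \ref{beltchar} for the Beltrami characterization), specialized to $n=1$ where $\iota_X(d\alpha)^n=0$ collapses to Gluck's condition $\iota_X d\alpha=0$. The only blemish is the harmless identity slip $\alpha\wedge\iota_X\mu=\mu$, which should read $\alpha\wedge\iota_X\mu=\alpha(X)\,\mu$; positivity, which is all you use, still holds.
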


In the next section, we consider a generalization of stable Hamiltonian structures such that the three first items are still equivalent in higher dimensions. However, we will see that volume-preserving Beltrami fields are a wider class once the dimension is at least five. In particular being Beltrami in high dimensions does not imply being geodesible, i.e. the existence of a one form positive on the Beltrami field $X$ satisfying $\iota_X d\alpha=0$.

\section{Steady Euler flows of Beltrami type}
\label{sec:euler}
In this section we provide an alternative approach to the study of geodesible volume-preserving vector fields. This will be useful to import techniques from the contact topology and stable Hamiltonian topology world and prove an existence theorem. As a consequence we obtain the proof of Theorem \ref{Eulerex}. 
\subsection{Stable Eulerisable structures}
A formulation in terms of differential forms can be given for the study of geodesible volume-preserving vector fields. This formulation opens the possibility to study these structures from a topological perspective, as done for stable Hamiltonian structure \cite{CV}.

\begin{defi}
A stable Eulerisable structure is a pair $(\alpha,\nu)$ in a manifold $M^{m+1}$, where $\alpha$ is a one form and $\nu$ a $m$-form such that 
\begin{itemize}
\item $\alpha \wedge \nu >0$,
\item $d\nu=0$,
\item  $\ker \nu \subset \ker d\alpha$.
\end{itemize}
\end{defi}
A stable Eulerisable structure defines a unit length geodesible and volume-preserving vector field $R$, defined by the equations $\iota_R\nu=0$ and $\alpha(R)=1$. We will call this vector field the Reeb vector field of $(\alpha,\nu)$, as it is done for stable Hamiltonian structures. In fact, stable Eulerisable structures are exactly the same as geodesible volume-preserving vector fields, but with some extra information fixed: the preserved volume and the connection one form (as in Gluck's characterization). Note that in three dimensions the definition coincides with the one of stable Hamiltonian structure.

Let us state a standard auxiliary lemma that will be used through the discussion. It is used for example in \cite{Gl2} or \cite{PRT}.

\begin{lemma} \label{metric}
Let $X$ be a non-vanishing vector field on a manifold $M$ and $\alpha$ a one form such that $\alpha(X)>0$. Let $\mu$ be a volume form in $M$. Then, there exist a Riemannian metric $g$ such that $g(X,\cdot)=\alpha$ and $\mu$ is the induced Riemannian volume.

\end{lemma}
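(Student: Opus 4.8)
The plan is to build $g$ directly from the data, using that beyond the linear-algebraic constraint $g(X,\cdot)=\alpha$ the only remaining requirement is a single scalar equation — matching Riemannian volumes — which can be absorbed by rescaling an auxiliary metric on the hyperplane field $\ker\alpha$.

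First I would set $\xi:=\ker\alpha\subset TM$. Since $\alpha(X)>0$ everywhere, $\xi$ has corank one, $TM=\R X\oplus\xi$, and the projection $\pi\colon TM\to\xi$ along $X$ is a smooth bundle map. The identity $g(X,\cdot)=\alpha$ forces $X$ to be $g$-orthogonal to $\xi$ with $g(X,X)=\alpha(X)$, so I would look for $g$ of the shape
\[
g(v,w)=\frac{\alpha(v)\,\alpha(w)}{\alpha(X)}+h(\pi v,\pi w),
\]
where $h$ is any fibrewise positive-definite symmetric form on $\xi$, extended to $TM$ by pulling back along $\pi$. One checks immediately that $g(X,w)=\alpha(w)$ for every $w$, that $g$ is positive definite because $\alpha(X)>0$ and $h$ is positive definite on $\xi$, and that the value of $g$ is independent of $h$ whenever one argument is $X$; so whatever choice of $h$ we make afterwards, the relation $g(X,\cdot)=\alpha$ persists.

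Next I would fix the volume. Choose an auxiliary fibre metric $h_0$ on $\xi$ (for instance the restriction of an arbitrary Riemannian metric on $M$, the one place a partition of unity enters), let $g_0$ be the associated metric above, and let $\mu_0$ be its Riemannian volume. In a local frame $X,v_1,\dots,v_{\dim M-1}$ with the $v_i\in\xi$, the Gram matrix of $g_0$ is block diagonal with blocks $(\alpha(X))$ and $(h_0(v_i,v_j))$, so $\sqrt{\det g_0}=\sqrt{\alpha(X)}\,\sqrt{\det(h_0(v_i,v_j))}$; hence replacing $h_0$ by $c\,h_0$ for a positive function $c$ multiplies $\mu_0$ by $c^{(\dim M-1)/2}$ and, by the previous paragraph, leaves $g(X,\cdot)=\alpha$ untouched. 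Since $\mu/\mu_0$ is a positive smooth function, I would set $c:=(\mu/\mu_0)^{2/(\dim M-1)}$, take $h:=c\,h_0$, and let $g$ be the corresponding metric: then $g(X,\cdot)=\alpha$ and the Riemannian volume of $g$ equals $c^{(\dim M-1)/2}\mu_0=\mu$, as required.

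The construction is canonical once $h_0$ is fixed, so nothing beyond the standard existence of $h_0$ is needed. The only points that require (routine) attention are the smoothness of $\pi$ and of the rescaling function $c$, and the elementary computation that rescaling the fibre metric on the rank-$(\dim M-1)$ bundle $\xi$ rescales the volume by the stated power; this is the sole, and mild, obstacle.
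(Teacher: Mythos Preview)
Your argument is correct and follows essentially the same route as the paper: split $TM=\R X\oplus\ker\alpha$, declare $X$ orthogonal to $\ker\alpha$ with $g(X,X)=\alpha(X)$, put an arbitrary metric on $\ker\alpha$, and then rescale that fibre metric by a conformal factor to match the prescribed volume. The only difference is that you are more explicit, writing out the formula $g=\alpha\otimes\alpha/\alpha(X)+\pi^*h$ and the exact rescaling power, whereas the paper simply asserts the three defining conditions and the existence of an appropriate conformal factor.
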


\begin{proof}
Construct a metric $g$ by requiring that 
\begin{itemize}
\item $g(X,\cdot)=\alpha$,
\item $X$ is orthogonal to $\ker \alpha$,
\item arbitrary metric on $\ker \alpha$.
\end{itemize}
For such a metric, we have $g(X,Y)=\alpha(Y)$ for any $Y$ and hence $\iota_Xg=\alpha$. By taking an appropiate conformal factor in the arbitrary metric on $\ker \alpha$, we can ensure that $\mu$ is the induced Riemannian volume.
\end{proof}

In order to justify the name of a "stable Eulerisable structure", we prove a correspondence between Reeb fields of these structures and some solutions to the Euler equations for some metric. This generalizes the correspondences proved in \cite{EG}, extended in \cite{CV0} and studied in more settings \cite{CMP}.

\begin{prop}\label{corr}
In a manifold $M$ of any dimension, there is a correspondence between reparametrizations of Reeb fields of stable Eulerisable structures and solutions to the Euler equations for some metric and constant Bernoulli function.
\end{prop}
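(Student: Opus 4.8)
The plan is to prove the correspondence in both directions, using Lemma \ref{metric} as the bridge between differential-form data and metric data.

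\textbf{From stable Eulerisable structures to Euler flows.} Suppose $(\alpha,\nu)$ is a stable Eulerisable structure with Reeb field $R$, characterized by $\iota_R\nu=0$ and $\alpha(R)=1$. I would first use the condition $\ker\nu\subset\ker d\alpha$ to deduce that $\iota_R d\alpha=0$: indeed, since $\iota_R\nu=0$ means $R\in\ker\nu$, the inclusion of kernels forces $R\in\ker d\alpha$, i.e. $\iota_R d\alpha=0$. Next I would produce a volume form adapted to the problem. Since $\alpha\wedge\nu>0$, this $(m+1)$-form $\mu:=\alpha\wedge\nu$ is a volume form, and $\alpha(R)=1>0$, so Lemma \ref{metric} supplies a metric $g$ with $\iota_R g=\alpha$ and with $\mu$ the Riemannian volume. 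It remains to check the Euler equations with constant Bernoulli function for this $g$: the first equation $\iota_R d\alpha=-dB$ holds with $B$ constant because we showed $\iota_R d\alpha=0$; the second equation $d\iota_R\mu=0$ needs $\iota_R\mu=\iota_R(\alpha\wedge\nu)=\alpha(R)\nu-\alpha\wedge\iota_R\nu=\nu$, which is closed by hypothesis $d\nu=0$. So $R$ is a steady Euler flow for $g$ with constant Bernoulli function; any reparametrization $fR$ with $f>0$ is then also a steady Euler flow with constant Bernoulli (for a possibly rescaled metric), since geodesibility and the volume-preserving property are reparametrization-stable and the first equation stays $\iota_{fR}d\alpha'=0$ for a suitable $\alpha'$.

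\textbf{From Euler flows to stable Eulerisable structures.} Conversely, suppose $X$ solves the Euler equations for some metric $g$ with constant Bernoulli function: then $\iota_X d\alpha=0$ where $\alpha=\iota_X g$, and $X$ preserves the Riemannian volume $\mu_g$, i.e. $d\iota_X\mu_g=0$. Up to reparametrizing $X$ to have $\alpha(X)=g(X,X)>0$ normalized — here I would rescale so that the dual form evaluates to $1$ on the field, replacing $X$ by $X/g(X,X)$ and adjusting; one has to be slightly careful but the Gluck-type condition $\iota_X d\alpha=0$ and positivity $\alpha(X)>0$ are what matter. Set $\nu:=\iota_X\mu_g$. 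Then $\alpha\wedge\nu=\alpha\wedge\iota_X\mu_g$; evaluating, $\iota_X(\alpha\wedge\mu_g)=\alpha(X)\mu_g - \alpha\wedge\iota_X\mu_g$, and since $\alpha\wedge\mu_g=0$ (top-degree-plus-one), we get $\alpha\wedge\nu=\alpha(X)\mu_g>0$. Also $d\nu=d\iota_X\mu_g=0$ by the second Euler equation. Finally $\ker\nu\subset\ker d\alpha$: the kernel of $\nu=\iota_X\mu_g$ is exactly the line spanned by $X$ (an $(m)$-form which is interior-product of a volume form with a nonzero vector has kernel precisely that vector's span), and $\iota_X d\alpha=0$ by the first equation, so $X\in\ker d\alpha$. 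Hence $(\alpha,\nu)$ is a stable Eulerisable structure, and its Reeb field is $X$ (after the normalization), so $X$ is a reparametrization of the Reeb field of a stable Eulerisable structure.

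\textbf{Main obstacle.} The routine parts are the interior-product identities and invoking Lemma \ref{metric}; the one genuinely delicate point is handling reparametrizations cleanly on both sides, in particular making precise that an Euler flow with constant Bernoulli function, after rescaling to unit ``$\alpha$-length,'' still solves the Euler equations for some (rescaled) metric — this is essentially the observation, stated in the excerpt after Lemma \ref{Eulerchar}, that constant Bernoulli function is equivalent to $\iota_X d\alpha=0$, combined with the fact that the class of geodesible volume-preserving fields is closed under reparametrization. I would spell out that last invariance carefully, since it is exactly where ``reparametrizations of Reeb fields'' (rather than Reeb fields themselves) enters the statement.
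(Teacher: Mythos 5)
Your proposal follows essentially the same route as the paper: in one direction use Lemma \ref{metric} to build a metric with $\iota_X g=\alpha$ and a prescribed Riemannian volume, and in the other set $\nu:=\iota_X\mu$ and verify the three axioms of a stable Eulerisable structure. The backward direction is fine (you are in fact more careful than the paper in noting that $\alpha(X)=g(X,X)$ need not equal $1$, so that $X$ is a priori only a reparametrization of the Reeb field of $(\alpha,\iota_X\mu)$). The one point to repair is in the forward direction: the assertion that the volume-preserving property is ``reparametrization-stable'' is false for a fixed volume form, since $\iota_{fR}(\alpha\wedge\nu)=f\nu$ and hence $d\,\iota_{fR}(\alpha\wedge\nu)=df\wedge\nu$ need not vanish for nonconstant $f$. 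The fix --- which is exactly what the paper does --- is to apply Lemma \ref{metric} directly to $X=fR$ with connection form $\alpha$ (note $\alpha(fR)=f>0$ and $\iota_{fR}d\alpha=0$ still hold) and with the volume form $\mu=\tfrac{1}{f}\,\alpha\wedge\nu$, so that $\iota_X\mu=\iota_R(\alpha\wedge\nu)=\nu$ is closed and the second Euler equation holds; your parenthetical ``for a possibly rescaled metric'' gestures at this but does not pin down the rescaling, and as you yourself flag, this is precisely the delicate step that needs to be spelled out.
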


\begin{proof}
Suppose $X=fR$ is a reparametrization of a Reeb field of a stable Eulerisable structure $(\alpha,\nu)$ for some positive function $f>0$ in a manifold $M$. Using Lemma \ref{metric} construct a metric $g$ such that $\iota_Xg=\alpha$ and such that the Riemannian volume is $\mu=\frac{1}{f}\alpha \wedge \nu$. 
 
 Using that $R\in \ker \nu$ we deduce that $X$ preserves the volume $\mu$. In particular, it is a solution to the Euler equations for the metric $g$, with constant Bernoulli function.

Conversely, suppose that $X$ is a vector field satisfying the Euler equations for some metric $g$, with constant Bernoulli function. Denoting $\alpha= g(X,\cdot)$ and $\mu$ the Riemannian volume, $X$ satisfies the equations
\begin{equation*}
\begin{cases}
\iota_Xd\alpha&=0 \\
d\iota_X\mu&=0
\end{cases}.
\end{equation*}
Take $\nu:=\iota_X\mu$ and we have that $\alpha(X)=1$, $\iota_X\nu=0$, $d\nu=0$ and $\alpha \wedge \nu>0$. Hence $X$ is the Reeb field of the stable Eulerisable structure $(\alpha,\nu)$.
\end{proof}
The terminology becomes now clear, since the Reeb field of a stable Eulerisable structure is a solution to the Euler equations for some metric. The notion of being stabilized by a one form $\alpha$ comes from the world of stable Hamiltonian structures. Concretely, following \cite{CV}, it is said that a one dimensional foliation $\mathcal{L}$ is \emph{stabilizable} if there is some vector field $X$ generating $\mathcal{L}$ and some one form $\alpha$ such that $\alpha(X)=1$ and $\iota_X d\alpha=0$. 

We will restrict ourselves to odd dimensions in the next sections, where the curl operator works similarly to the three dimensional case. Then the dimension of $M$ will be $2n+1$ and $\nu$ is a form of degree $2n$. In this case, the Reeb field of a stable Eulerisable structure is Beltrami for some metric.

\begin{prop}\label{belt}
Let $R$ be the Reeb field of a stable Eulerisable structure $(\alpha,\nu)$ in a manifold $M$ of odd dimension. Then any reparametrization of $R$ is Beltrami for some metric $g$ and preserves the induced Riemannian volume. 
\end{prop}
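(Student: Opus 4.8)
The plan is to reuse the metric from the proof of Proposition \ref{corr} and then verify the Beltrami condition by a short pointwise linear-algebra argument. Write a reparametrization as $X=fR$ with $f\in C^\infty(M)$ and $f>0$. Then $\alpha(X)=f\,\alpha(R)=f>0$, so Lemma \ref{metric} furnishes a metric $g$ with $\iota_Xg=\alpha$ whose Riemannian volume is the prescribed form $\mu:=\tfrac1f\,\alpha\wedge\nu$ (a genuine volume form, since $\alpha\wedge\nu>0$). Using $\iota_R\nu=0$ and $\alpha(R)=1$ one computes $\iota_X\mu=\iota_R(\alpha\wedge\nu)=\alpha(R)\,\nu-\alpha\wedge\iota_R\nu=\nu$, hence $d\iota_X\mu=d\nu=0$ and $X$ preserves $\mu$. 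This already takes care of the volume-preserving assertion.

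For the Beltrami property, set $\beta:=\iota_Xg=\alpha$, so that the curl $Y$ of $X$ for $g$ is the vector field determined by $\iota_Y\mu=(d\beta)^n=(d\alpha)^n$. The key point is that $(d\alpha)^n$ is necessarily a function multiple of $\nu$. Indeed, since $R\in\ker\nu\subset\ker d\alpha$ we have $\iota_R d\alpha=0$, whence $\iota_R(d\alpha)^n=n\,(\iota_R d\alpha)\wedge(d\alpha)^{n-1}=0$. Now at any point $p$ the vector $R_p$ is nonzero, and working in a basis adapted to $R_p$ one checks that the space of $2n$-covectors on $T_pM$ annihilated by $\iota_{R_p}$ is one-dimensional; as $\nu_p$ lies in it and is nonzero (because $\alpha\wedge\nu>0$), that line is spanned by $\nu_p$. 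Therefore $(d\alpha)^n=\phi\,\nu$ for a unique $\phi\in C^\infty(M)$, smoothness being clear because $\nu$ is nowhere vanishing. I expect this one-dimensionality statement to be the only genuine content of the argument; everything else is bookkeeping with the defining relations of $(\alpha,\nu)$ and the freedom in Lemma \ref{metric} to prescribe the Riemannian volume.

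Combining the two computations, $\iota_Y\mu=(d\alpha)^n=\phi\,\nu=\phi\,\iota_X\mu=\iota_{\phi X}\mu$, and since $\iota_{(\cdot)}\mu$ is injective on vector fields we conclude $Y=\phi X$; that is, $X$ is Beltrami for the metric $g$ (with proportionality factor $\phi$, which need not be nonvanishing). Since $f>0$ was arbitrary, this applies to every reparametrization of $R$, with a metric that depends on $f$ only through the choice $\mu=\tfrac1f\,\alpha\wedge\nu$.
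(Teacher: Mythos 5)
Your proof is correct and follows essentially the same route as the paper's: reuse the metric and volume $\mu=\tfrac1f\alpha\wedge\nu$ from Proposition \ref{corr} and exploit $R\in\ker d\alpha$ to kill $\iota_R(d\alpha)^n$. The only (harmless) difference is the final step: the paper simply notes $\iota_X\iota_Y\mu=f\,\iota_R(d\alpha)^n=0$ and concludes parallelism from non-degeneracy of $\mu$, whereas you identify the line of $\iota_R$-annihilated $2n$-covectors to write $(d\alpha)^n=\phi\,\nu$ explicitly, which additionally exhibits the proportionality factor.
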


\begin{proof}
The same metric and volume that we constructed in the first implication of Proposition \ref{corr} work. Let $X=fR$ be a reparametrization of $R$ by a positive function $f$. The curl vector field of $X$ is defined as the only vector field $Y$ such that
$$ \iota_Y \mu = (d\alpha)^n. $$ 
Then $\iota_X \iota_Y \mu= f\iota_R (d\alpha)^n=0$ since $R\in \ker d\alpha$. Hence $X$ is parallel to its curl, and again preserves $\mu$.
\end{proof}

We will prove an existence theorem for stable Eulerisable structures in odd dimensions, Theorem \ref{thm:exi}, implying Theorem \ref{Eulerex}. The relation between geodesible vector fields and open book decompositions was already suggested by Gluck \cite{Gl2}, and used in \cite{HW} to construct a geodesible vector field in any odd dimensional manifold. By using techniques coming from contact and stable Hamiltonian structures, we improve the construction to obtain the existence theorem for stable Eulerisable structures.
\subsection{Open book decompositions}
We first discuss some results on open book decompositions proved in \cite{Et}. 
\begin{defi}
An open book decomposition for a $(2n+1)$-manifold $M$ is a pair $(B,\pi)$ such that
\begin{itemize}
\item $B$ is a codimension $2$ submanifold that admits a trivial neighborhood $U=B\times D^2$,
\item $\pi:M \backslash B \rightarrow S^1$ is a fibration which, when restricted to $U$, corresponds to the projection $(p,r,\theta) \mapsto \theta$ where $(r,\theta)$ are polar coordinates in $D^2$.
\end{itemize}
We call $B$ the binding and $P:=\pi^{-1}(\theta)$ a page of the open book. If the page satisfies that it is a handlebody with handles of maximum index $n$, we say that the page is \emph{almost canonical}.
\end{defi}

Given a hyperplane field $\xi$ on the binding $B$ we can construct an hyperplane field in the whole manifold $M$. Denote by $\alpha$ a one form defining $\xi$, i.e. $\ker \alpha=\xi$. Restricting ourselves to the neighborhood $U= B \times D^2$ with polar coordinates $(r,\theta)$ in $D^2$. We define
$$ \beta=\tilde f(r)d\theta + \tilde g(r) \alpha,  $$
where $\tilde f$ and $\tilde g$ are smooth functions satisfying the following conditions. \begin{equation}\label{fg}
\begin{cases}
\tilde f(r)=r^2 \text{ near } 0, \quad \tilde f(r)=1 \text{ near } 1 \\
\tilde g(r)=1 \text{ near } 0, \quad \tilde g(r)=0 \text{ near } 1
\end{cases}.
\end{equation}  The one form $\beta$ can be extended as $\pi^*d\theta$ to the whole manifold. The kernel of $\beta$ defines an hyperplane field in $M$, whose restriction to $B$ is $\xi$. If we denote $\mathcal{H}(M)$ the hyperplane fields of a manifold, we just constructed a map 
\begin{align*}
H : \mathcal{H}(B) &\longrightarrow \mathcal{H}(M) \\
 		\xi=\ker \alpha &\longmapsto \ker \beta.
\end{align*}

Under the assumption that the pages of the open book are almost canonical, the map has a useful homotopic property.
\begin{theorem}[\cite{Et}]\label{et}
Let $M$ be any manifold of dimension $2n+1>3$ and $(B,\pi)$ any open book decomposition of $M$. Then we have
\begin{enumerate}
\item The map $H$ is well defined up to homotopy.
\item If the pages of $(B,\pi)$ are almost canonical, then the map $H$ is surjective at the homotopic level. More specifically, for any hyperplane $\eta$ in $M$ there is some hyperplane $\xi$ in $B$ such that $H(\xi)$ is homotopic to $\eta$.
\end{enumerate}
\end{theorem}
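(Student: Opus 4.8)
The plan is to recast everything in terms of obstruction theory for sections of a sphere bundle, and then treat the two assertions separately. Fixing an auxiliary metric, a cooriented hyperplane field on a closed manifold $N^k$ is the same as a nowhere-zero section of the unit cotangent bundle $S(T^*N)\to N$, whose fibre is $S^{k-1}$; homotopy classes of hyperplane fields are thus homotopy classes of such sections, governed by the groups $\pi_i(S^{k-1})$. The only input I need from homotopy theory is that $\pi_i(S^{2n})=0$ for $0<i<2n$, together with $\pi_{2n}(S^{2n})=\Z$ and $\pi_{2n+1}(S^{2n})=\Z/2$ (the latter holding since $2n\ge 4$). First I would record that $\beta=\tilde f(r)\,d\theta+\tilde g(r)\,\alpha$ is genuinely nowhere zero: the pair $(\tilde f,\tilde g)$ never vanishes simultaneously by \eqref{fg}, the term $\tilde f(r)\,d\theta$ is smooth across $r=0$ because $\tilde f(r)=r^2$ there, and the extension by $\pi^*d\theta$ matches since $\tilde f\equiv 1$, $\tilde g\equiv 0$ near $r=1$. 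The geometric picture to keep in mind is that $H(\xi)=\ker\beta$ equals the page distribution $TP=\ker(\pi^*d\theta)$ on the bulk $M\setminus U$ and rotates near the binding so as to restrict to $\xi$ on $B\times\{0\}$.

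For assertion (1), I would show that the homotopy class $[H(\xi)]$ depends only on $[\xi]$ and not on the auxiliary data. There are three independent choices. Replacing the defining form $\alpha$ of $\xi$ by another $\alpha'$ with the same kernel and coorientation amounts to $\alpha'=\lambda\alpha$ with $\lambda>0$; since positive functions form a convex set, $\lambda$ can be deformed to $1$, yielding a path of nowhere-zero forms $\beta$ and hence a homotopy of $H(\xi)$. The admissible pairs $(\tilde f,\tilde g)$ satisfying \eqref{fg} (with the standard monotone, non-winding behaviour) likewise form a connected set, so any two choices are joined by a path through nowhere-zero $\beta$'s. Finally, a homotopy $\xi_t=\ker\alpha_t$ through hyperplane fields on $B$ produces, through the very same formula, a continuous family $\beta_t$, i.e. a homotopy $H(\xi_0)\simeq H(\xi_1)$. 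Combining the three gives a well-defined map $[\mathcal{H}(B)]\to[\mathcal{H}(M)]$.

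For assertion (2) I would argue by a two-stage relative homotopy. Split $M=(M\setminus U)\cup U$ with $U=B\times D^2$. The bulk $M\setminus U$ is homotopy equivalent to the mapping torus of the page $P$; since the pages are almost canonical, $P$ is a handlebody with handles of index at most $n$, so $M\setminus U$ has the homotopy type of a CW complex of dimension at most $n+1$. Because $n+1<2n$ whenever $2n+1>3$, and $\pi_i(S^{2n})=0$ in that range, there is no obstruction to homotoping $\eta|_{M\setminus U}$ to the page distribution $TP$; this is exactly where both hypotheses of the theorem enter. Performing this homotopy (extended across a collar of $\partial U$) puts $\eta$ in the form $\ker(\pi^*d\theta)$ on the bulk, so that on $\partial U$ it agrees with $\beta|_{r=1}=d\theta$. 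It then remains to homotope $\eta|_U$, rel $\partial U$, to $H(\xi)|_U$ for an appropriate $\xi$ on $B$, which I would take to be the hyperplane field induced by $\eta$ along $B\times\{0\}$.

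The heart of the matter, and the step I expect to be the main obstacle, is this last relative homotopy over the binding neighborhood, where the complement cannot be made low-dimensional: $U$ has full dimension $2n+1$, so the top homotopy groups of $S^{2n}$ come into play. Using the Thom isomorphism $H^i(U,\partial U)\cong H^{i-2}(B)$ for the disk bundle $U\to B$, the obstructions to the rel-boundary homotopy between $\eta|_U$ and $H(\xi)|_U$ are confined to $H^{2n-2}(B;\Z)$ and $H^{2n-1}(B;\Z/2)$. I would then match these against the freedom still available on the binding side: varying $\xi$ within $\mathcal{H}(B)$ changes the comparison class by elements of precisely these groups (these being the obstruction groups for hyperplane fields on $B^{2n-1}$, whose fibre is $S^{2n-2}$), while the residual $\Z/2$ top class can be absorbed by modifying the winding of the path $(\tilde f,\tilde g)$ in $\R^2\setminus\{0\}$. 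Showing that these adjustments realize every required class — equivalently, that $H$ surjects onto the homotopy classes of hyperplane fields on $U$ restricting to $d\theta$ on $\partial U$ — is the crux; once established, it upgrades the bulk homotopy to a global one and yields $H(\xi)\simeq\eta$.
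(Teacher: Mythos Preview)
The paper does not prove this theorem: it is quoted from Etnyre's preprint \cite{Et} and is used as a black box in the proof of Theorem~\ref{thm:exi}. There is therefore no ``paper's own proof'' to compare your proposal against.

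That said, a few comments on your sketch. Your treatment of part~(1) is clean and correct: independence of the defining form, of the pair $(\tilde f,\tilde g)$ within its connected component, and of $\xi$ within its homotopy class are exactly the checks one needs. For part~(2) your obstruction-theoretic strategy --- first kill obstructions on the mapping torus using that the page is $n$-dimensional up to homotopy so that $M\setminus U$ has the homotopy type of an $(n+1)$-complex and $n+1<2n$, then deal with $U$ rel $\partial U$ --- is the right shape and is indeed how Etnyre proceeds. However, your proposal does not actually carry out the crucial step: you identify the obstruction groups $H^{2n-2}(B;\Z)$ and $H^{2n-1}(B;\Z/2)$ correctly but only assert that they can be killed by varying $\xi$ and by changing the winding of $(\tilde f,\tilde g)$. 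There is a tension here with your part~(1): if altering the winding changes the homotopy class of $H(\xi)$, then $H$ is not well defined solely in terms of $[\xi]$, contradicting what you just proved; and if it does not change the class, then it cannot absorb the $\Z/2$ obstruction. In Etnyre's argument the top $\Z/2$ is handled differently, essentially by a further modification of $\xi$ on the top cell of $B$ rather than by the winding. So your outline is on the right track but, as you yourself flag, the endgame over $U$ is incomplete and the proposed mechanism for the $\Z/2$ obstruction is not quite right.
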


We will modify a little bit the map $H$ while keeping its properties. This modification will be useful in the construction of the stable Eulerisable structures of Theorem \ref{thm:exi}. To do so, take a smooth function $h:[0,1]\rightarrow \mathbb{R}$ such that $h(0)=0$, $h(1)=1$ and $h'(x)\neq 0$ only in $[\frac{1}{3},\frac{2}{3}]$.

\begin{lemma}\label{lemmH}
Consider the map $\tilde H:\mathcal{H}(B)\rightarrow \mathcal{H}(M)$ that sends $\xi$ to the hyperplane $\ker ( h(r) d\theta + (1-h(r))\alpha)$ in a neighborhood of the binding and extended in its complement as $\ker d\theta$.
Then $\tilde H$ also satisfies Theorem \ref{et}.
\end{lemma}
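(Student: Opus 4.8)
The plan is to reduce everything to Theorem \ref{et} by exhibiting a homotopy from $H$ to $\tilde H$. The two maps are built from the same data — a one form $\alpha$ defining $\xi$ on $B$, together with a ``rotation'' of the hyperplane field inside the trivializing neighbourhood $U=B\times D^2$ — and they differ only in the radial profile functions used for this rotation. So I would first show, for each fixed $\xi=\ker\alpha\in\mathcal H(B)$, that $H(\xi)$ and $\tilde H(\xi)$ are homotopic in $\mathcal H(M)$, and then argue that $\tilde H$ inherits properties (1) and (2) of Theorem \ref{et} from this.

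For the pointwise homotopy, working in $U$ with polar coordinates $(r,\theta)$ on $D^2$ I would take the family of one forms
\[
\beta_t \;=\; \big((1-t)\tilde f(r)+t\,h(r)\big)\,d\theta \;+\; \big((1-t)\tilde g(r)+t\,(1-h(r))\big)\,\alpha ,\qquad t\in[0,1],
\]
which linearly interpolates between the form $\beta$ defining $H(\xi)$ at $t=0$ and the form $h(r)\,d\theta+(1-h(r))\,\alpha$ defining $\tilde H(\xi)$ at $t=1$. The points to check are: (i) $\beta_t$ is nowhere zero for all $t$ — near $r=0$ the coefficient of $\alpha$ equals $1$, near $r=1$ the coefficient of $d\theta$ equals $1$, and for intermediate $r$ the two coefficients are nonnegative combinations of $\tilde f,h$ and of $\tilde g,1-h$ which do not vanish simultaneously, so $\beta_t$ never vanishes on $U$; (ii) near $r=1$ one has $\beta_t=d\theta$ for every $t$, so each $\beta_t$ extends to all of $M$ as $\pi^*d\theta$ just like $\beta$, yielding an honest path in $\mathcal H(M)$; and (iii) near $r=0$ one has $\beta_t=(1-t)r^2\,d\theta+\alpha$, which is smooth across $B$ and restricts to $\alpha$ there, so the path even stays among hyperplane fields restricting to $\xi$ on the binding. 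Hence $t\mapsto\ker\beta_t$ is a homotopy in $\mathcal H(M)$ from $H(\xi)$ to $\tilde H(\xi)$.

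Given this, the lemma follows formally. For (1): if $\xi_0\simeq\xi_1$ then $\tilde H(\xi_0)\simeq H(\xi_0)\simeq H(\xi_1)\simeq\tilde H(\xi_1)$ by Theorem \ref{et}(1) and the pointwise homotopy, and independence of the homotopy class of $\tilde H(\xi)$ from the auxiliary choices (the profile $h$ and the representative $\alpha$ of $\xi$) is obtained by the same kind of linear interpolation, now of the profile functions and of the positive conformal factor relating two choices of $\alpha$; thus $\tilde H$ is well defined up to homotopy. For (2): when the pages are almost canonical, Theorem \ref{et}(2) produces for any $\eta\in\mathcal H(M)$ some $\xi\in\mathcal H(B)$ with $H(\xi)\simeq\eta$, and then $\tilde H(\xi)\simeq H(\xi)\simeq\eta$, so $\tilde H$ is surjective at the homotopic level as well. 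The only genuinely substantive step is verifying (i)–(iii), i.e. that the interpolating forms stay nonvanishing and match $\pi^*d\theta$ near $\partial U$ and a form restricting to $\xi$ along $B$; everything else is just transport of Theorem \ref{et} along the homotopy $H\simeq\tilde H$, so I do not expect a real obstacle here.
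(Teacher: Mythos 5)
Your proposal is correct and follows essentially the same route as the paper: the paper also takes the linear interpolations $\tilde f_t=(1-t)\tilde f+th$ and $\tilde g_t=(1-t)\tilde g+t(1-h)$, notes that the boundary values at $r=0$ and $r=1$ are preserved so that $\beta_t$ extends by $\pi^*d\theta$, and concludes that $\ker\beta_t$ is a homotopy from $H(\xi)$ to $\tilde H(\xi)$. Your checks (i)--(iii) and the formal transport of properties (1) and (2) of Theorem \ref{et} along this homotopy are exactly the details the paper leaves implicit.
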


\begin{proof}
Clearly, the function $\tilde f$ is homotopic to $h$ by an homotopy $\tilde f_t=(1-t)\tilde f+th$, which satisfies $\tilde f_t(0)=0$ and $\tilde f_t(1)=1$. The function $\tilde g$ is homotopic to $1-h$ by an homotopy $\tilde g_t=(1-t)\tilde g+t(1-h)$, which satisfies $\tilde g_t(0)=1$ and $\tilde g_t(1)=0$. In particular, in the neighborhood of the binding the kernel of the one form
$$  \beta_t= \tilde f_t d\theta + \tilde h_t \alpha  $$
can be extended to the rest of the manifold as being equal to $\pi^*d\theta$. We get a family of one-forms whose kernels define an homotopy between $H(\xi)$ and $\tilde H(\xi)$. 
\end{proof}

\subsection{An existence result}

As in the case of stable Hamiltonian structures, we can relate stable Eulerisable structures to open books.

\begin{defi}
A stable Eulerisable structure $(\alpha,\nu)$ is supported by the open book $(B,\pi)$ if $\nu$ restricts as a positive volume form to each page of the open book.  If the restriction of $(\alpha,\nu)$ to any connected component of $B$ is a positive stable Eulerisable structure, we say that it is positively supported by the open book. Analogously, if the restriction to each connected component of $B$ is a negative stable Eulerisable structure, we say that it is negatively supported by the open book.
\end{defi}

In the three dimensional case, it was proved in \cite{CV} that any open book supports a stable Hamiltonian structure, and that any two stable Hamiltonian structures in the same cohomology class and same signs (induced orientations) in the binding circle components are connected by a stable homotopy supported by the open book. In constrast with these results, we prove that in a fixed open book decomponsition with almost cannonical pages, we can have stable Eulerisable structure positively supported by the open book in every homotopy class of hyperplane fields. Maybe asking that both stable Eulerisable structures induce the same stable Eulerisable homotopy class in the binding, or at least the same hyperplane field homotopy class, is enough to prove that they are connected by a stable homotopy supported by the open book. In the three dimensional case, this would correspond to the fact of inducing the same signs in the circle binding components and to Theorem 4.2 in \cite{CV}.

\begin{theorem}\label{thm:exi}
Let $M$ be a manifold of dimension $2n+1>3$ with a fixed open book decomposition $(B,\pi)$ with almost canonical pages. Then for every homotopy class of hyperplane field $[\eta]$ and every cohomology class $\gamma \in H^{2n}(M)$ there is a stable Eulerisable structure $(\alpha,\nu)$  positively supported by $(B,\pi)$ such that $\ker \alpha$ is homotopic to $\eta$ and $[\nu]=\gamma$.
\end{theorem}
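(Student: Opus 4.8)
The plan is to build the pair $(\alpha,\nu)$ by hand inside the open book, glueing a model near the binding to a model over the fibration $\pi$. First I would fix, on the binding $B$ (which has dimension $2n-1$, odd), a stable Eulerisable structure $(\alpha_B,\nu_B)$ whose underlying hyperplane field $\xi=\ker\alpha_B$ is chosen so that, after applying the map $\tilde H$ of Lemma \ref{lemmH}, the resulting hyperplane field on $M$ is homotopic to $\eta$; this is possible because the pages are almost canonical and so $\tilde H$ is surjective up to homotopy by Theorem \ref{et} (via Lemma \ref{lemmH}). Existence of \emph{some} stable Eulerisable structure on $B$ in the homotopy class $\xi$ is itself a consequence of the construction applied one dimension down, so in practice one runs the construction inductively, the base case being dimension one (a circle, where $(\,d\theta,\,d\theta)$ works).

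Next I would define $\alpha$ on the neighborhood $U=B\times D^2$ by $\alpha = h(r)\,d\theta + (1-h(r))\,\alpha_B$, exactly the one form whose kernel is $\tilde H(\xi)$, extended as $\pi^*d\theta$ outside $U$; by Lemma \ref{lemmH} its kernel is homotopic to $\eta$. The form $\nu$ is the delicate piece: on each page $P$, which is an open manifold with boundary $B$, I want $\nu|_P$ to be a positive volume form, and globally I need $d\nu=0$, $\ker\nu\subset\ker d\alpha$, and $\alpha\wedge\nu>0$. The natural candidate is $\nu = \pi^*(d\theta)\wedge\tau$ where $\tau$ is a suitable $(2n-1)$-form built from $\nu_B$ near the binding and from a page-volume elsewhere, plus correction terms supported near $B$ to kill the boundary behaviour of $d\nu$; the condition $\ker\nu\subset\ker d\alpha$ will force $d\alpha$ to be expressible through $d\theta$ and the binding data, which is why the specific cutoff profile $h$ (constant outside $[\tfrac13,\tfrac23]$) and the product structure of $U$ are used. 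Finally, the cohomology class $[\nu]=\gamma$ is arranged by adding to $\nu$ a closed $2n$-form representing $\gamma - [\nu_0]$ supported away from the binding and small enough in $C^0$ to preserve both $\alpha\wedge\nu>0$ and the page-positivity; since $H^{2n}(M)$ is unobstructed here and $\nu$ already has some class, this is a genuinely soft step.

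The main obstacle, I expect, is the compatibility condition $\ker\nu\subset\ker d\alpha$ together with $d\nu=0$ across the transition region $\{r\in[\tfrac13,\tfrac23]\}$: here both $\alpha$ and the natural page-volume are changing, and one must verify that the Reeb direction determined by $\iota_R\nu=0$ stays in $\ker d\alpha$ and that no exact correction term needed to enforce closedness of $\nu$ destroys positivity of $\alpha\wedge\nu$. I would handle this by choosing $\nu$ so that in $U$ it literally equals $d\theta\wedge\nu_B$ wherever $h'=0$ (i.e.\ $r$ near $0$ and near $\tfrac13$), so that the only region requiring analysis is the compact annulus $r\in[\tfrac13,\tfrac23]$, where a partition-of-unity interpolation between $d\theta\wedge\nu_B$ and a fixed page volume, combined with Lemma \ref{metric}-type flexibility, gives the needed forms; the closedness is then repaired by subtracting $d(\chi\,\lambda)$ for an appropriate primitive $\lambda$ and cutoff $\chi$, and one checks the perturbation is small. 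Once $(\alpha,\nu)$ is constructed, that it is \emph{positively} supported by $(B,\pi)$ is immediate from the fact that its restriction to each component of $B$ is the chosen positive structure $(\alpha_B,\nu_B)$.
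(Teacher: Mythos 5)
Your overall strategy matches the paper's (induction on dimension, Quinn's open books, surjectivity of $\tilde H$ from Lemma \ref{lemmH} to hit the homotopy class, then correcting the cohomology class by a form supported away from the binding), but the central step --- producing a closed $2n$-form $\nu$ with $\ker\nu\subset\ker d\alpha$ and $\alpha\wedge\nu>0$ across the transition annulus $r\in[\tfrac13,\tfrac23]$ --- is exactly the part you leave as a sketch, and the sketch as written does not close. Your candidate local model $d\theta\wedge\nu_B$ has the wrong degree: $\nu_B$ is a $(2n-2)$-form on the $(2n-1)$-dimensional binding, so $d\theta\wedge\nu_B$ is a $(2n-1)$-form, and one needs an extra $r\,dr$ factor. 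More seriously, the plan of interpolating by a partition of unity and then ``repairing closedness by subtracting $d(\chi\lambda)$'' is not shown to preserve either $\alpha\wedge\nu>0$ or the kernel condition, and there is no a priori reason the needed exact correction is small. The paper sidesteps this entirely by reversing the order of construction: it first writes down the would-be Reeb field $Y=f(r)\pp{}{\theta}+(1-f(r))\pi^*X$ (with $X$ the Reeb field of the binding structure), checks the pointwise conditions $\alpha(Y)>0$ and $\iota_Yd\alpha=0$ --- which reduce to $hf+(1-h)(1-f)>0$ and $h'(1-2f)=0$, solved by taking $f\equiv\tfrac12$ on the support of $h'$ --- then builds a $Y$-invariant volume form $\Theta$ and sets $\nu=\iota_Y\Theta$. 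Closedness is then automatic ($d\nu=\mathcal{L}_Y\Theta=0$ by volume preservation) and $\ker\nu=\langle Y\rangle\subset\ker d\alpha$ holds by construction. You need this mechanism or an equivalent one; as it stands your $\nu$ is not constructed.

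Two smaller points. The base case of your induction cannot be the circle: Theorem \ref{et} requires dimension $2n+1>3$, so the open-book step cannot produce the $3$-dimensional case from a $1$-dimensional binding. The induction must start at dimension $3$, where Lutz--Martinet gives a contact form $\alpha$ in every homotopy class of plane fields and $(\alpha,d\alpha)$ is the required structure (also, $(d\theta,d\theta)$ on $S^1$ has $\nu$ of the wrong degree). Finally, in the cohomology step you cannot make a closed representative of a fixed nonzero class ``$C^0$-small''; the correct mechanism is that $\gamma-[\nu]$ has a representative of the special form $\pi^*d\theta\wedge\beta$ with $\beta$ supported away from $U$, so that $\alpha\wedge(\pi^*d\theta\wedge\beta)=0$ identically (there $\alpha=\pi^*d\theta$), the restriction to the pages is unchanged, and $d\alpha=0$ on the support so the kernel condition survives --- no smallness is needed or available.
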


\begin{proof}
Let us first prove there is some stable Eulerisable structure supported by $(B,\pi)$ in every homotopy class of hyperplane fields, delaying the discussion about the cohomology class of $\nu$.

We will prove it by induction. As a first step, Martinet-Lutz \cite{Lut, Mar} proved that in any $3$ manifold every plane field is homotopic to a contact structure $\alpha$, which is also a stable Eulerisable structure given by $(\alpha,d\alpha)$. 

Assume now that for every manifold up to dimension $2n-1$ there is a stable Eulerisable structure in every homotopy class of non-vanishing vector fields. Let $M$ be an arbitrary compact manifold of dimension $2n+1$. In \cite{Quinn}, Quinn shows that any odd dimensional manifold $M$ of dimension at least $5$ admits an open book decomposition $(B,\pi)$ such that its pages are handlebodies with handles of index less or equal than $n$. 

We are in the hypothesis of Theorem \ref{et}, and by Lemma \ref{lemmH} any homotopy class of hyperplane fields of $M$ is in the image of $\tilde H$. Hence given a class $[\eta]$, there is a hyperplane field in $B$ such that $[\tilde H(\xi)]=[\eta]$. By hypothesis there exist a stable Eulerisable structure $(\beta,\nu')$ with connection form $\beta$ on $B$ in the homotopy class of $\xi$. Let us denote by $X$ the Reeb field of $(\beta,\nu')$. The form $\beta$ defines an hyperplane field $\xi'=\ker \beta$ homotopic to $\xi$. Hence $[\tilde H(\xi')]=[\tilde H(\xi)]$ and the form $\beta$ extends in the trivial neighborhood of the binding $U=B\times D^2$ as
$$ \alpha= h(r) d\theta + (1-h(r)) \beta,  $$
where $h(r)$ was such that $h(0)=0$, $h(1)=1$ and $h'\neq 0$ only in $[\frac{1}{3},\frac{2}{3}]$. Its derivative is $d\alpha= h'dr\wedge d\theta - h' dr\wedge \beta + (1-h)d\beta $. Let us consider a vector field  of the form
\begin{equation}\label{eq:localY}
Y= f(r)\pp{}{\theta} + (1-f(r))\pi^*X,
\end{equation}
with $f$ being a smooth function $f:[0,1]\rightarrow \mathbb{R}$ satisfying that $f(0)=0$ and $f(1)=1$. To simplify notation, we will keep denoting $X$ the vector field $\pi^*X$ defined in $U$.
\begin{center}
\begin{figure}[!ht]
	\begin{tikzpicture}[scale=1]
     \node[anchor=south west,inner sep=0] at (0,0) {\includegraphics[scale=0.15]{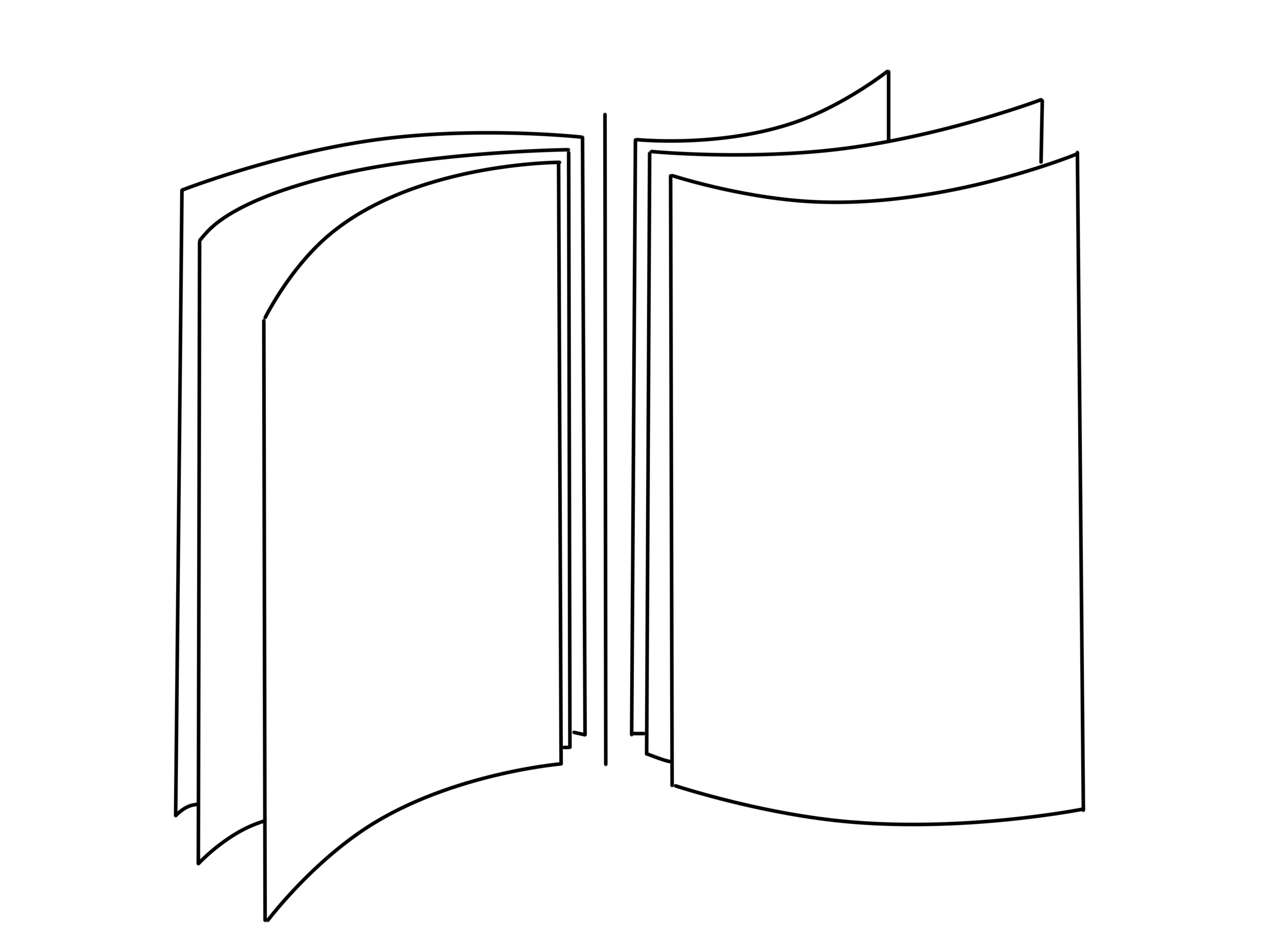}};
     
	 \node at (5,7.3) {$B$};     
     
     \draw[red,->] (4.72,3) -- (4.72,5);
     \draw[red,->] (6.1,3) arc(0:40:2);
     \draw[red,->] (7.3,3) arc(0:40:2 and 1.4);
	 \draw[red,->] (8.3,3) arc (0:40:2 and 0.8);     
     
     \draw[red,->] (3.2,3) arc(180:220:2);
     \draw[red,->] (2.4,3) arc(180:220:2 and 1.4);
     \draw[red,->] (1.8,3) arc(180:220:2 and 0.8);
     
     \draw [dashed] (5.6,3)--(5.6,4.1);
     \draw [dashed] (6.85,3)--(6.85,3.8);
     \draw [dashed] (7.85,3)--(7.85,3.4);
     
     \draw [dashed] (3.62,3)--(3.62,1.9);
     \draw [dashed] (2.9,3)--(2.9,2.2);
     \draw [dashed] (2.2,3)--(2.2,2.5);
     
     \node[red] at (9.1,3.5) {$Y$};
   \end{tikzpicture}
    
    \caption{Vector field supported by an open book}
    \label{openbook}
  \end{figure}
\end{center}
Imposing the geodesibility conditions $\iota_Y \alpha>0$ and $\iota_Y d\alpha=0$ we obtain the equations:
\begin{equation*}
\begin{cases}
	&hf+(1-h)(1-f)>0 \\
	&h'(1-2f)=0
\end{cases}
\end{equation*}
Take the function $f$ such that $f\neq 0$ in $(0,1]$ and $f=\frac{1}{2}$ in $[\frac{1}{3},\frac{2}{3}]$. Possible choices are depicted in Figure \ref{functs}.

\begin{center}
\begin{figure}[!ht]
\begin{tikzpicture}[scale=0.8]
      \draw[->] (-1,0) -- (14,0) node[right] {$r$};
      \draw[->] (0,-1) -- (0,3.5) node[above] {$y$};
      
      \node[black] at (0.6,1.2) {$f$};
      \node[blue] at (5.8,1.6) {$h$};
      
      \node at (12,-0.6) {$1$};    
      \node at (12,0) {$|$}; 
     
      \node at (4,-0.6) {$\frac{1}{3}$};    
      \node at (4,0) {$|$};  
      \node at (8,-0.6) {$\frac{2}{3}$};    
      \node at (8,0) {$|$}; 
           
      \draw[scale=2,domain=0:1.5,smooth,variable=\x,black] plot ({\x},{0.25*(1+tanh(5*((\x/1.5)-0.5)))});
      \draw[scale=2,domain=1.5:4.5,smooth,variable=\x,black] plot ({\x},{0.5});
	  \draw[scale=2,domain=4.5:5,smooth,variable=\x,black] plot ({\x},{0.5+0.25*(1+tanh(5*((\x-4.5)/(0.5)-0.5))});
      
      \draw[scale=2,domain=2:4,smooth,variable=\x, blue] plot ({\x},{0.5*(1+tanh(5*(((\x-2)/2-0.5)))});
      
       \draw[scale=2,domain=0:2,smooth,variable=\x, blue] plot ({\x},{0});
       \draw[scale=2,domain=4:6,smooth,variable=\x, blue] plot ({\x},{1});
    \end{tikzpicture}
    
    \caption{Possible choice of $f$ and $h$}
    \label{functs}
    \end{figure}
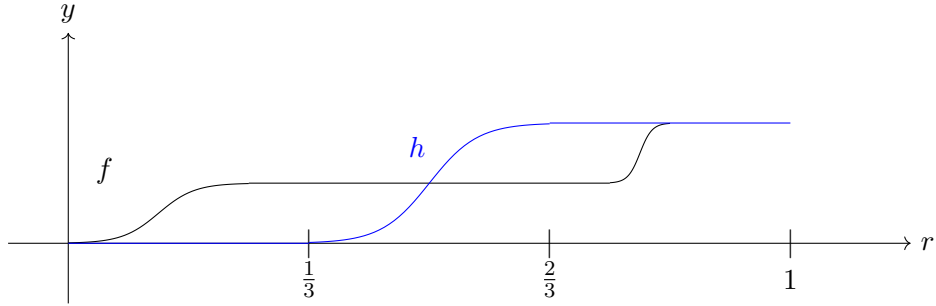
\end{center}

The vector field $Y$ can be extended as $\pi^*\pp{}{\theta}$ to the whole manifold $M$ and $\alpha$ as $\pi^*d\theta$. Hence we obtained a vector field $Y$ which is geodesible with connection form $\alpha$, and its orthogonal hyperplane field is in the class $[H(\xi')]$. 

In the trivial neighborhood $U$, there is a volume form 
$$\Theta=rdr\wedge d\theta \wedge \mu,$$
where $\mu=\beta\wedge \nu'$ is the volume form in $B$ preserved by $X$. This is a volume form in $U$ and $rdr\wedge \mu$ is a volume form in any fixed page $\{\theta=\theta_0\}$ of the trivial neighborhood $U$. Hence it can be extended to a volume form $\mu_P$ in the whole page. In particular $\Theta$ can be extended to $M$ such that away from $U$ it has the form $\pi^*d\theta \wedge \mu_P$. 

Let us check that $Y$ preserves $\Theta$. Away from $U$ this is clear, since $Y=\pi^*\pp{}{\theta}$ and hence $\mathcal{L}_Y\Theta= d(\iota_Y \Theta)=d(\mu_P)=0$. In $U$ we compute using the local expression \eqref{eq:localY} of $Y$:
\begin{align*}
\mathcal{L}_Y \Theta&= d(\iota_Y \Theta)= d(f(r)rdr\wedge \mu + (1-f(r))r\iota_X\mu \wedge dr\wedge d\theta) \\
&= 0 + (1-f(r))r(d\iota_X\mu) \wedge dr\wedge d\theta \\
&= 0,
\end{align*}
where we used that $d\iota_X \mu=d\nu'=0$, since $X$ is the Reeb field of $(\beta,\nu')$. Taking $\nu=\iota_Y \Theta$, the pair $(\alpha,\nu)$ defines a stable Eulerisable structure supported by the open book $(B,\pi)$. Since $\eta$ was arbitrary, this proves that any homotopy class of hyperplane fields contains a stable Eulerisable structure supported by the open book.
 
\hfill \newline
It remains to check that $\nu$ can be modified to obtain any homology class in $H^{2n}(M)$. This works as the three dimensional case for stable Hamiltonian structures. The open book decomposition $(B,\pi)$ can be seen as a mapping torus of a $2n$-manifold with boundary, the page $P$, with diffeomorphism given by the monodromy map of the open book. Denote $(P,\partial P)$ the page with its boundary and $(P_\varphi, \partial P_\varphi)$ the associated mapping torus. The exact sequence of the pair $(M,B)$ is
$$ H^{2n-1}(B) \xrightarrow{d*} H^{2n}(M,B) \xrightarrow{j*} H^{2n}(M)\rightarrow H^{2n}(B)=0. $$
The space $H^{2n}(M,B)$ is the same as $H^{2n}(P_\varphi,\partial P_\varphi)$. As in \cite[Lemma 4.4]{CV}, we have the following lemma in any dimension with the same proof. As before, we denote by $(r,\theta)$ coordinates in the disk component of the trivial neighborhood $U=B\times D^2$.

\begin{lemma}
Any De Rham cohomology class $\eta \in H^{2n}(M)$ has a representative of the form $\pi^*d\theta\wedge \beta$ where $\beta$ is a $(2n-1)$-form with compact support in $M\backslash B$.

\end{lemma}

The cohomology class $\eta-[\nu]$ can be represented by $\pi^*d\theta \wedge \beta$, and we can assume $\beta$ is with support in $M\backslash U$ i.e. vanishing in the trivial neighborhood $U=B\times D^2$. Defining the closed $2n$-form
$$ \tilde \nu= \nu + \pi^*d\theta \wedge \beta, $$
it satisfies that its restriction to the pages coincides with $\nu$. Hence $\tilde \nu$ is a positive volume form in the pages and represents the cohomology class $\eta$. Furthermore, in the support of $\beta$ the form $\alpha$ is given by $\pi^*d\theta$ by construction, since it is away from the neighborhood $U$. In particular $d\alpha=0$, and we have necessarily that $\ker \nu \subset \ker d\alpha$. Furthermore $\alpha \wedge \tilde \nu= \pi^*d\theta \wedge (\nu + \pi^*d\theta\wedge \beta)= \alpha \wedge \nu>0$ and so $(\alpha, \tilde \nu)$ defines a stable Eulerisable structure, positively supported by $(B,\pi)$. This concludes the proof.
\end{proof}

The homotopy classes of hyperplane fields defined by the kernel of the one form of a stable Eulerisable structure are in correspondence with the homotopy classes of non-vanishing vector fields defined by the Reeb field. From a dynamical viewpoint, we can interpret the result: every non-vanishing vector field is homotopic through non-vanishing vector fields to a geodesible and volume-preserving field.

Combining Theorem \ref{thm:exi} with Proposition \ref{corr}, we obtain the existence result for Euler steady flows Theorem \ref{Eulerex}, generalizing results in \cite{EG} to higher dimensions. Another interpretation, in terms of foliation theory is the following.

\begin{corollary}
Let $M$ be an odd dimensional manifold. In an arbitrary homotopy class of non-vanishing vector fields, there exists a metric such that $M$ can be foliated by geodesics.
\end{corollary}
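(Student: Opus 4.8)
The plan is to obtain the corollary as a direct consequence of Theorem \ref{thm:exi} (together with the base case of its inductive proof), Lemma \ref{metric}, and Gluck's characterization of geodesible fields recalled in Section \ref{sec:pre}. Fix an odd-dimensional manifold $M$ and a homotopy class of non-vanishing vector fields on it. If $\dim M = 1$ there is nothing to prove; if $\dim M = 3$ the Martinet--Lutz theorem provides a contact form in the prescribed class, which is a stable Eulerisable structure $(\alpha, d\alpha)$; and for $\dim M \ge 5$, Quinn's theorem supplies an open book decomposition with almost canonical pages, so Theorem \ref{thm:exi} applies. In all cases, using the correspondence between homotopy classes of coorientable hyperplane fields and of non-vanishing vector fields noted after Theorem \ref{thm:exi}, we produce a stable Eulerisable structure $(\alpha, \nu)$ on $M$ whose Reeb field $R$, characterised by $\alpha(R) = 1$ and $\iota_R \nu = 0$, lies in the chosen homotopy class.

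Next I would convert this structure into a metric realizing the desired geodesic foliation. Since $R \in \ker \nu$ and $\ker \nu \subset \ker d\alpha$, the Reeb field satisfies $\iota_R d\alpha = 0$, and $\alpha(R) = 1 > 0$. By Lemma \ref{metric}, applied with $X = R$, the one-form $\alpha$, and the volume form $\alpha \wedge \nu$, there is a Riemannian metric $g$ with $g(R, \cdot) = \alpha$; in particular $g(R,R) = \alpha(R) = 1$, so $R$ has unit length for $g$. From $\mathcal{L}_R \alpha = \iota_R d\alpha + d(\iota_R \alpha) = 0 + d(1) = 0$, the Koszul formula gives $g(\nabla_R R, W) = (\mathcal{L}_R \alpha)(W) = 0$ for every vector field $W$, hence $\nabla_R R = 0$. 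Thus every integral curve of $R$ is a unit-speed geodesic of $g$; this is precisely the content of Gluck's characterization, that a connection one-form $\alpha$ with $\iota_R d\alpha = 0$ forces the orbits of $R$ to be geodesics of a suitable metric.

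Finally, since $R$ is non-vanishing, its integral curves are the leaves of a one-dimensional foliation of $M$, and by the previous step each leaf is a geodesic of $g$; therefore $M$ is foliated by geodesics of $g$, with the underlying foliation in the prescribed homotopy class, as claimed. I expect no real obstacle: the only point requiring care is the passage from ``geodesible with connection one-form $\alpha$'' to ``the orbits are genuine geodesics of the metric built by Lemma \ref{metric}'', which is the classical computation of Gluck (and Sullivan, Wadsley) reproduced above, together with the separate appeal to Martinet--Lutz to cover the three-dimensional case that Theorem \ref{thm:exi} explicitly excludes.
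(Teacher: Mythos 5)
Your argument is correct and is essentially the paper's own (implicit) proof: the corollary is stated there as an immediate consequence of Theorem \ref{thm:exi}, since the Reeb field of a stable Eulerisable structure is by definition a unit-length geodesible field, with Gluck's characterization and Lemma \ref{metric} supplying the metric. You simply spell out the Koszul-formula computation behind Gluck's criterion and handle the three-dimensional case via Martinet--Lutz exactly as the base case of the induction in Theorem \ref{thm:exi} does, so nothing substantive differs.
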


It follows from the construction of the proof of Theorem \ref{thm:exi} that given a stable Eulerisable structure in the binding of any open book, we can construct one in the ambient manifold supported by the open book. Observe this holds for any open book: the hypothesis on the pages of the open book in the last Theorem is only used to show that there is a stable Eulerisable structure in every homotopy class of hyperplane fields.
\begin{corollary}\label{adOB}
Given an open book decomposition $(B,\pi)$ of $M$, a manifold of odd dimension, and a stable Eulerisable structure $(\alpha_B,\nu_B)$ on the binding, there is a stable Eulerisable structure positively supported by $(B,\pi)$ inducing $(\alpha_B,\nu_B)$ on the binding.

\end{corollary}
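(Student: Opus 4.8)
The plan is to re-run, essentially verbatim, the construction in the proof of Theorem \ref{thm:exi}, but feeding in the given structure $(\alpha_B,\nu_B)$ on the binding in place of one built there from the induction hypothesis via the surjectivity (up to homotopy) of $\tilde H$. The sole purpose of the hypothesis on the pages in Theorem \ref{thm:exi} was to guarantee, through Theorem \ref{et} and Lemma \ref{lemmH}, that every homotopy class of hyperplane fields on $M$ lies in the image of $\tilde H$, so that the inductive step could be started from \emph{some} stable Eulerisable structure on $B$; here such a structure is handed to us outright, so that hypothesis is superfluous and any open book will do.

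Concretely, write $(\alpha_B,\nu_B)=(\beta,\nu')$ and let $X$ be its Reeb field, so $\beta(X)=1$, $\iota_X\nu'=0$, $d\nu'=0$, and, since $X\in\ker\nu'\subset\ker d\beta$, also $\iota_X d\beta=0$. First I would extend $\beta$ over the trivial neighborhood $U=B\times D^2$ as $\alpha=h(r)\,d\theta+(1-h(r))\,\beta$, with $h$ as in Lemma \ref{lemmH}, and as $\pi^*d\theta$ on the rest of $M$; since $h\equiv0$ near $r=0$ this gives $\alpha=\beta$ near $B$. Next I would set $Y=f(r)\,\partial_\theta+(1-f(r))\,\pi^*X$ on $U$, with $f$ chosen as in the proof of Theorem \ref{thm:exi} ($f(0)=0$, $f(1)=1$, $f\neq0$ on $(0,1]$, $f\equiv\tfrac12$ on $[\tfrac13,\tfrac23]$), extended as $\pi^*\partial_\theta$ elsewhere. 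The same computations as there give $\alpha(Y)=hf+(1-h)(1-f)>0$ and $\iota_Y d\alpha=h'(1-2f)\,dr+(1-h)\iota_Y d\beta=0$, using $\iota_X d\beta=0$ and $h'(1-2f)\equiv0$; so $Y$ is geodesible with connection form $\alpha$. Then I would form $\Theta=r\,dr\wedge d\theta\wedge\mu$ on $U$ with $\mu=\beta\wedge\nu'$, extend $r\,dr\wedge\mu$ to a page volume form and $\Theta$ to all of $M$ as in that proof, and check $\mathcal{L}_Y\Theta=d(\iota_Y\Theta)=0$ using $d\mu=d(\beta\wedge\nu')=0$ and $d\nu'=0$. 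Putting $\nu:=\iota_Y\Theta$ one gets $\alpha\wedge\nu=\alpha(Y)\,\Theta>0$, $d\nu=\mathcal{L}_Y\Theta=0$, and $\ker\nu=\R Y\subset\ker d\alpha$, so $(\alpha,\nu)$ is a stable Eulerisable structure; and its restriction to each page --- a positive multiple of the page volume form near $B$ (as $f>0$ on $(0,1]$) and the chosen page volume form away from $U$ --- shows that $(\alpha,\nu)$ is supported by $(B,\pi)$.

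The one point not already contained in the proof of Theorem \ref{thm:exi}, and the step I expect to require the most care, is verifying that $(\alpha,\nu)$ really induces $(\alpha_B,\nu_B)$ on the binding and is \emph{positively} supported. From $\alpha=\beta$ near $B$ we read off $\alpha|_B=\alpha_B$ at once. For the transverse volume form the clean move is to write the $D^2$-factor in Cartesian coordinates $(x,y)$, in which $\partial_x,\partial_y$ extend smoothly across $B$ and $Y$ is visibly smooth there; a short contraction, using $\iota_X\mu=\nu'$, gives $\iota_{\partial_y}\iota_{\partial_x}\nu|_B=(1-f(0))\,\nu'=\nu_B$, which is precisely the natural way a transverse $2n$-form on $M$ restricts to a stable Eulerisable structure on the codimension-$2$ binding. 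Finally, since $\mu=\beta\wedge\nu'$ is the orientation of $B$ used throughout, $(\alpha_B,\nu_B)$ is positive for that orientation, so $(\alpha,\nu)$ is positively supported by $(B,\pi)$, as claimed. Everything else is a verbatim transcription of the computations in the proof of Theorem \ref{thm:exi}.
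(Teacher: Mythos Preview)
Your proposal is correct and follows exactly the paper's intended argument: the paper does not give a separate proof of this corollary but simply remarks, immediately before stating it, that it ``follows from the construction of the proof of Theorem~\ref{thm:exi}'' and that the almost-canonical-page hypothesis there was used only to hit every homotopy class. Your write-up is a faithful expansion of that remark, with the added (and welcome) care of spelling out why the resulting structure restricts to $(\alpha_B,\nu_B)$ on $B$ via the transverse contraction $\iota_{\partial_y}\iota_{\partial_x}\nu|_B=\nu'$.
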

This provides even more examples of steady solutions to the Euler equations, and also of geodesible and Beltrami volume-preserving fields. 
\begin{Remark}
Corollary \ref{adOB} holds also for even dimensional manifolds, so any open book decomposition of an even dimensional manifold, whose binding admits a stable Eulerisable structure, also admits a stable Eulerisable structure. Since the $2$-torus admits trivially such a structure, we deduce that any four manifold admitting an open book decomposition with torus binding components admits also a stable Eulerisable structure. These type of open book decompositions were considered for example in \cite{CPV}.
\end{Remark}

In \cite{CMPP} the authors prove that given a geodesible field in a manifold $M$, it can be "Reeb embedded" in the standard sphere of dimension $3\dim M+2$. This means that for a given geodesible field $X$ in $M$, there exists an embedding $e:M\rightarrow (S^{3\dim M+2},\xi_{std})$ such that there is a contact form whose Reeb field $R$ satisfies $e_*X=R$. Since the constructed steady flows are geodesible, we deduce the following corollary.

\begin{corollary}
Any homotopy class of non-vanishing vector fields of a manifold $M$ of dimension $2n+1$ can be realized as an invariant submanifold of the Reeb field of a contact form defining the standard contact structure in the sphere $(S^{6n+5},\xi_{std})$.
\end{corollary}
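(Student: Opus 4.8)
The plan is to obtain this as a formal consequence of two facts already at our disposal: the existence statement Theorem~\ref{Eulerex} (itself a consequence of Theorem~\ref{thm:exi} combined with Proposition~\ref{corr}) and the Reeb embedding theorem of \cite{CMPP}. First I would fix an arbitrary homotopy class $[X_0]$ of non-vanishing vector fields on $M^{2n+1}$. By Theorem~\ref{Eulerex} there is a metric $g$ and a vector field $X$ homotopic to $X_0$ that is a steady solution of the Euler equations with constant Bernoulli function; in particular $X$ is geodesible (by Gluck's characterization, or equivalently because it is the Reeb field of a stable Eulerisable structure, cf. Proposition~\ref{belt}).

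Next I would invoke the main result of \cite{CMPP}: every geodesible vector field $X$ on a manifold $M$ admits a Reeb embedding into the standard contact sphere of dimension $3\dim M+2$. Concretely, there is an embedding $e:M\hookrightarrow (S^{3\dim M+2},\xi_{std})$ together with a contact form $\lambda$ with $\ker\lambda=\xi_{std}$ whose Reeb field $R_\lambda$ is tangent to $e(M)$ and satisfies $e_*X=R_\lambda|_{e(M)}$. Substituting $\dim M=2n+1$ gives $3\dim M+2=6n+5$, so $e(M)\subset S^{6n+5}$ is an embedded invariant submanifold of $R_\lambda$. Since $e$ is a diffeomorphism onto its image, $e(M)$ carries the same homotopy classes of non-vanishing vector fields as $M$, and the restriction $R_\lambda|_{e(M)}$ corresponds under $e$ to $X$, which lies in the prescribed class $[X_0]$. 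This is exactly the assertion.

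The point worth emphasizing is that essentially all of the difficulty is absorbed into the two ingredients being chained: Theorem~\ref{thm:exi} (whose proof relies on Quinn's open book decompositions with almost canonical pages and on Etnyre's homotopy-theoretic statement about the map $H$, Theorem~\ref{et}), and the $h$-principle-type construction underlying the Reeb embedding theorem of \cite{CMPP}. The only genuinely delicate step in the present argument is the interpretation of the statement itself: "realized as an invariant submanifold" should be understood as saying that the homotopy class is realized on a submanifold diffeomorphic to $M$, with the vector field in the class being (the pushforward of) the restriction of the ambient Reeb field; once this is granted, no new argument beyond the dimension count $3(2n+1)+2=6n+5$ is required.
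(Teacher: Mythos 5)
Your proposal is correct and follows exactly the paper's argument: take the geodesible representative of the homotopy class furnished by Theorem~\ref{Eulerex} (via Theorem~\ref{thm:exi}), then apply the Reeb embedding theorem of \cite{CMPP} into $(S^{3\dim M+2},\xi_{std})=(S^{6n+5},\xi_{std})$. The dimension count and the interpretation of ``invariant submanifold'' are as in the paper, so nothing further is needed.
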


\subsection{Chaotic steady Euler flows}

In \cite{GK} it is proved that in the analytic case, chaotic solutions to the Euler equations are always of Beltrami type. However, the construction of such chaotic Euler flows is left as a question.  In \cite{Gh}, Ghrist gives the first examples: if one takes a Reeb field of a contact form, which is an Eulerisable flow, one can locally modify it to obtain another chaotic Reeb field of a contact form and hence a steady Euler flow. By chaotic we mean such that there is a compact invariant set of the Reeb field possessing positive topological entropy. This proves that any contact manifold in any dimension admits a non integrable steady Euler flow. Adapting Ghrist's result to the solutions constructed in Theorem \ref{Eulerex}, we can generalize it to prove that any odd dimensional manifold admits such non integrable flows. Let us recall the contact case.

Given a contact form $\alpha$ in $M$ of dimensions $2n+1$, in the neighborhood $U=D^{2n+1}$ of any point $p$ the contact form has the expression
$$\alpha= dz + \sum_{i=1}^{n} x_i dy_i,$$
where $(z,x_1,y_1,...,x_n,y_n)$ are coordinates in the neighborhood $U$. The following theorem shows the existence of a contact chaotic Reeb field whose contact form coincides with the standard one away of a compact subset of $\R^{2n+1}$ .

\begin{theorem*}[\cite{EG2,Gh}]
There is function $F(x_i,y_i,z)$ which is equal to $1$ away of a neighborhood of $0\in D^n$ such that 
$$\lambda= F.(dz+\sum_{i=1}^n x_i dy_i),$$
is a contact form in $\R^{2n+1}$ such that there is a compact invariant set of the Reeb field possessing positive topological entropy i.e. the Reeb flow is "chaotic".
\end{theorem*}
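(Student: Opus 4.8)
\medskip
\noindent\textit{Proof strategy.}
The plan is to reduce to dimension three, which is the content of \cite{Gh,EG2}, and then stabilize. In dimension three the Reeb field of $\alpha_0 = dz + x_1\,dy_1$ on $\R^3$ is $\partial_z$, whose flow is by vertical translations and hence has no recurrence at all. Multiplying by a positive function $F_3 = F_3(z,x_1,y_1)$ changes the Reeb field into an explicit vector field built from $F_3$ and its first derivatives, which still equals $\partial_z$ wherever $F_3\equiv 1$; the idea of Ghrist and Etnyre--Ghrist is to choose $F_3$, equal to $1$ outside a ball $B\subset\R^3$, so that inside $B$ the Reeb flow ``recirculates'' and traps a compact invariant set on which it is conjugate to the suspension of a Smale horseshoe. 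Since a suspended full shift on two symbols has positive topological entropy, this is precisely the desired chaotic behaviour. The existence of such an $F_3$ is seen by realizing a nontrivial template --- a branched surface carrying a suspended subshift of finite type --- as an invariant piece of the Reeb flow, or equivalently by inserting a ``contact plug'' modeled on the mapping torus of an area preserving diffeomorphism of the disk that is the identity near the boundary and has a horseshoe in the interior; near $\partial B$ the plug is standard and glues into $(\R^3,\alpha_0)$ along the region where the Reeb flow was vertical.

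Granting the three dimensional case, I would build $F$ in dimension $2n+1$ by a symmetric extension. Writing the coordinates as $(z,x_1,y_1,\dots,x_n,y_n)$, set $r^2 = \sum_{i\ge2}(x_i^2+y_i^2)$, pick a cutoff $\chi\colon[0,\infty)\to[0,1]$ with $\chi(0)=1$ supported near $0$, and put
\[
F \;=\; 1 + \chi(r^2)\,\bigl(F_3(z,x_1,y_1)-1\bigr).
\]
Then $F$ is everywhere positive, being the convex combination $(1-\chi)\cdot 1+\chi\cdot F_3$ of $1$ and $F_3>0$; it equals $1$ outside a neighbourhood of the origin; and $\lambda = F\,\alpha_0$ is automatically a contact form, since any positive rescaling of a contact form is contact (one has $\lambda\wedge(d\lambda)^n = F^{n+1}\,\alpha_0\wedge(d\alpha_0)^n$). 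Both $F$ and $\alpha_0$ are invariant under each involution $(x_i,y_i)\mapsto(-x_i,-y_i)$ with $i\ge2$, hence so is $\lambda$, and hence so is its Reeb field $R$, which is therefore tangent to the common fixed locus $N=\{\,x_i=y_i=0:\ i\ge2\,\}\cong\R^3$ of these involutions (a field invariant under an involution is tangent to the involution's fixed set). On $N$ one has $r=0$, so $F|_N=F_3$ and $dF|_N=dF_3$, and a short computation shows that the terms of $d\lambda$ containing $x_i\,dy_i$ or $dx_i\wedge dy_i$ with $i\ge2$ contribute nothing to $\iota_R d\lambda$ along $N$, since they are proportional either to the vanishing coordinates $x_i$ or to the vanishing $\partial_{x_i},\partial_{y_i}$-components of $R$. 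Consequently $\iota_R d\lambda=0$ and $\lambda(R)=1$ restricted to $N$ are precisely the defining equations of the Reeb field of $\lambda_3=F_3(dz+x_1\,dy_1)$ on $\R^3$, so by uniqueness $R|_N$ is the three dimensional chaotic Reeb field. The compact invariant set $K\subset N$ of positive topological entropy from the dimension three construction is then a compact invariant set of the Reeb flow of $\lambda$ on $\R^{2n+1}$ with the same entropy, which proves the statement.

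I expect the main obstacle to lie entirely in the three dimensional input, namely producing $F_3$ compatibly with the contact condition. One cannot simply prescribe the horseshoe dynamics by hand, because arbitrary ``recirculating'' plugs are not realizable inside Reeb flows; the content of \cite{Gh,EG2} is that the plug must arise from an honest area preserving surface map carrying a horseshoe, inserted through the open book / suspension picture, and verifying that the resulting one form is genuinely contact and matches the standard model near the boundary of the plug is the delicate part. The stabilization step above, by contrast, is routine once the equivariance under the involutions is noticed.
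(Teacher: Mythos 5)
This statement is not proved in the paper at all: it is quoted verbatim as an external result of Etnyre--Ghrist and Ghrist \cite{EG2,Gh}, so there is no internal proof to compare your argument against. Judged on its own terms, your reduction of the $(2n+1)$-dimensional statement to the $3$-dimensional one is correct and cleanly executed: positivity of $F=(1-\chi)+\chi F_3$ and the conformal identity $\lambda\wedge(d\lambda)^n=F^{n+1}\alpha_0\wedge(d\alpha_0)^n$ give the contact condition for free; invariance of $\lambda$ under the involutions $(x_i,y_i)\mapsto(-x_i,-y_i)$, $i\ge 2$, forces the Reeb field to be tangent to the fixed locus $N\cong\R^3$; and since $\lambda$ pulls back to $N$ as the contact form $\lambda_3=F_3(dz+x_1\,dy_1)$, the restricted field satisfies $\lambda_3(R|_N)=1$ and $\iota_{R|_N}d\lambda_3=i^*(\iota_R d\lambda)=0$, hence equals the three-dimensional Reeb field by uniqueness, carrying its positive-entropy compact invariant set with it. The one thing to flag is that essentially all of the content of the cited theorem lives in the three-dimensional input you take as given --- the existence of an $F_3\equiv 1$ outside a ball in the Darboux model whose Reeb flow traps a suspended horseshoe --- and your description of it (a contact plug built from an area-preserving disk map with a horseshoe, glued in along a flow box) is a summary of \cite{EG2} rather than a proof; since the statement is itself a citation, that is acceptable, but one should check that \cite{EG2} really yields a modification compactly supported in a Darboux ball, as opposed to a contact form on $S^3$ or a solid torus that still needs to be normalized into the form $F_3\cdot(dz+x_1\,dy_1)$. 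Your dimension-reduction via equivariance is the genuinely new step, and it is sound.
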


We obtain the corollary below by combining this "inserted field" with the construction in Theorem \ref{thm:exi}.

\begin{corollary}
Every odd dimensional manifold admits a chaotic non-vanishing solution to the Euler equations for some metric.
\end{corollary}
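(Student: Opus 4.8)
The plan is to reduce the corollary to Ghrist's three-dimensional construction by exploiting the inductive nature of the proof of Theorem \ref{thm:exi}. The key observation is that the Reeb field produced there is, along the binding of the open book, tangent to the binding and equal to the Reeb field of the structure carried by the binding. Indeed, in the local model \eqref{eq:localY} one has $Y = f(r)\pp{}{\theta} + (1-f(r))\pi^*X$ with $f(0)=0$, so along $B=\{r=0\}$ the field $Y$ equals $\pi^*X$, which is tangent to $B$; hence $B$ is an invariant submanifold of the ambient Reeb field and the restriction is exactly the Reeb field $X$ of the stable Eulerisable structure on $B$. Iterating this — equivalently, iterating Corollary \ref{adOB} through open books of decreasing odd dimension, which is possible by Quinn's theorem as long as the dimension is at least $5$ — one obtains a chain of invariant submanifolds $M\supset B_1\supset B_2\supset\cdots\supset N$ with $N$ a closed $3$-manifold (an iterated binding), $N$ invariant under the ambient Reeb field, and the ambient Reeb field restricting on $N$ to the $3$-dimensional Reeb field carried by $N$.

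Now I would install the chaos at the bottom of this chain. On the closed orientable $3$-manifold $N$ we are free to seed the construction with any stable Eulerisable structure; by Martinet--Lutz we take a contact structure, and then, inserting the plug of \cite{EG2,Gh} into a Darboux chart, we take the contact form on $N$ so that its Reeb field $R_N$ has a compact invariant set $K\subset N$ with positive topological entropy. Feeding this into the inductive construction of Theorem \ref{thm:exi} (i.e. applying Corollary \ref{adOB} up the chain $N\to B_{n-2}\to\cdots\to M$) produces a stable Eulerisable structure $(\alpha,\nu)$ on $M$ whose Reeb field $R$ satisfies $R|_N=R_N$. In particular $K$ is a compact invariant set of $R$, and since $R|_K=R_N|_K$ the topological entropy of $R$ on $K$ is positive.

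Finally, by Proposition \ref{corr} the Reeb field $R$ is a solution to the Euler equations for the metric furnished by Lemma \ref{metric} (with constant Bernoulli function), and it is non-vanishing with a compact invariant set of positive topological entropy, i.e. a chaotic steady Euler flow. The case $\dim M=3$ needs none of the above: it follows directly from Martinet--Lutz together with Ghrist's theorem applied in a Darboux chart of $M$, so we may assume $\dim M\geq 5$. Orientability, used to invoke Martinet--Lutz on $N$ and to have a preserved volume, is part of the standing hypotheses (the Euler equations presuppose a volume form $\mu$).

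The main obstacle is the gap between Ghrist's result, which yields chaos only for Reeb flows of \emph{contact} forms, and the stable Eulerisable structures of Theorem \ref{thm:exi}, which need not be contact near any point (e.g. $\alpha$ is closed near the binding in that construction), so one cannot simply drop the contact plug into a flow box of a general steady solution; one would otherwise have to build a plug adapted to geodesible volume-preserving flows directly. The resolution above circumvents this by installing the chaos in the three-dimensional stratum of the iterated open-book construction, where the structure can be taken genuinely contact, and then transporting it unchanged to $M$ because every binding in the chain is an invariant submanifold on which the ambient Reeb field agrees with the lower-dimensional one; the only point requiring care is the bookkeeping that the restriction identity $R|_N=R_N$ indeed passes through all the intermediate levels, which it does since at each level the ambient Reeb field restricts to the binding's Reeb field.
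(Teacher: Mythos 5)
Your proof is correct, but it takes a genuinely different route from the paper's. The paper stays inside a single open book: away from the binding, the construction of Theorem \ref{thm:exi} produces a region $V=S^1\times D^{2n}$ where $X=\partial_\theta$ and $\alpha=d\theta$, and there the form is deformed to $\tilde\alpha=d\theta+\varphi(r)\alpha_{std}$ --- which leaves the Reeb field untouched but makes $\tilde\alpha$ genuinely contact near the core circle --- so that Darboux applies and Ghrist's chaotic contact plug can be inserted directly in the top-dimensional manifold. You instead descend the inductive chain of bindings to a $3$-dimensional stratum $N$, make the seed contact form on $N$ chaotic, and transport the invariant set upward unchanged, using the (correct) observation that $f(0)=0$ in \eqref{eq:localY} forces each binding to be invariant with the ambient Reeb field restricting to the binding's Reeb field. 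Both are legitimate resolutions of the obstacle you correctly identify (Ghrist's theorem needs a contact form, while the stable Eulerisable structure of Theorem \ref{thm:exi} is nowhere contact near the pages). The paper's version is more economical: one local deformation, no need to track the whole iterated chain, and it applies verbatim to any stable Eulerisable structure admitting a flow box where $\alpha=d\theta$. Your version buys a slightly stronger conclusion --- the chaotic compact invariant set sits inside an invariant $3$-dimensional submanifold, and the entropy is visibly inherited from the three-dimensional case. Two small points you should make explicit: the bindings in Quinn's decompositions are non-empty (the pages are handlebodies, hence have non-empty boundary), so the chain genuinely bottoms out at a closed $3$-manifold; and since the corollary asks for existence only, you are free to choose the contact structure on $N$ without worrying about which homotopy class it induces upstairs.
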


\begin{proof}
The three dimensional case is covered, since any three manifold is contact. Let $M$ be a manifold of dimension $2n+1>3$ and an almost canonical open book decomposition $(B,\pi)$ on it. Using Theorem \ref{thm:exi}, we construct a stable Eulerisable structure $(\alpha, \nu)$ supported by $(B,\pi)$. By construction, in the trivial neighborhood of the binding $U=B\times D^2$ the Reeb vector field $X$ and the form $\alpha$ have the expressions
\begin{equation*}
\begin{cases}
 X&= f(r) \pp{}{\theta} + (1-f(r)) \pi^*X \\
 \alpha&= h(r)d\theta + (1-h(r))\beta
\end{cases}.
\end{equation*}
In particular, when $r>2/3$ we have $h=0$ and we can pick $f$ such that in a neighborhood $r\in (1-\varepsilon,1)$ we have $f=1$ (as the one depicted in Figure \ref{functs}). In particular there is a neighborhood of the form $V=S^1 \times D^{2n}$ with coordinates $(\theta,x_1,...,x_{2n})$ where $X|_V=\pp{}{\theta}$ and $\alpha|_V=d\theta$. Consider the function $r=\sqrt{\sum_{i=1}^{2n} x_i^{2}}$ and a function $\varphi(r)$ which is $r^2$ in a neighborhood of $r=0$ and vanishes in a neighborhood of $1$. Then we can change $\alpha$ by 
$$ \tilde \alpha= d\theta + \varphi(r) \alpha_{std}, $$
where $\alpha_{std}$ is the standard contact structure in the sphere $S^{2n-1}$ seen in $D^{2n}$.  In a neighborhood of $r=0$ we have that $\tilde \alpha$ is a contact form with Reeb field equal to $X$. It is easy to check that the vector field $X=\pp{}{\theta}$ still satisfies $\iota_X d\tilde \alpha=0$. Hence $X$ is also the Reeb field of the stable Eulerisable structure defined by $(\tilde \alpha, \nu)$. However, using Darboux theorem we can now find coordinates $(z,x_i,y_i)$ at a neighborhood $D$ of a point where $\tilde \alpha$ is a contact form  such that $X|_D=\partial_z$ and $\tilde \alpha|_D= dz + \sum_{i=1}^{n} x_i dy_i$. Inserting a contact form as the one in the previous Theorem yields a one form $\lambda$ which is contact, defines a Reeb field $R$ such that $\lambda$ coincides with $\tilde \alpha$ and $R$ with $X$ away of a small neighborhood of the point $p$. Extending $\lambda$ as $\tilde \alpha$ and $R$ as $X$ in the rest of the manifold we obtain a vector field $R$ which preserves a volume $\mu$  and such that $\iota_R \lambda=1$ and $\iota_R d\lambda=0$. Hence $R$ is the Reeb field of the stable Eulerisable structure $(\lambda, \iota_R \mu)$ and defines a steady solution to the Euler equations for some metric. Furthermore, the vector field is chaotic.
\end{proof}
The geometric formulation of geodesible volume-preserving vector fields allowed again to naturally import techniques coming from contact topology.

\section{High dimensional Beltrami fields}\label{sec:bel}
We study in this section the interactions between Beltrami, geodesible and Eulerisable fields. We construct vector fields which are Beltrami for some metric but that are not geodesible, in any odd dimensional manifold of dimension at least $5$. The construction, which uses plugs, can be done volume-preserving and yields examples of volume-preserving Beltrami fields which are not Eulerisable.

One can characterize vector fields which are Beltrami in a similar way as Gluck's characterization for geodesible fields.

\begin{lemma} \label{beltchar}
Let $M$ be a manifold of dimension $2n+1$. A vector field $X$ is a Beltrami field for some metric $g$ if and only if there is a one form $\alpha$ such that $\alpha(X)>0$ and $\iota_X (d\alpha)^n=0$. If furthermore $X$ preserves a volume $\mu$, one can construct a metric $g$ such that $\mu$ is the Riemannian volume.

\end{lemma}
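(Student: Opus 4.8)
The plan is to mimic Gluck's characterization of geodesible fields (stated earlier in the excerpt), replacing the condition $\iota_X d\alpha = 0$ by the weaker condition $\iota_X (d\alpha)^n = 0$, and to use the auxiliary Lemma~\ref{metric} to turn the one-form data into metric data. First I would prove the ``only if'' direction: suppose $X$ is Beltrami for some metric $g$, so that $\alpha := \iota_X g$ satisfies $\alpha(X) = g(X,X) > 0$ and the curl $Y$, defined by $\iota_Y \mu = (d\alpha)^n$, equals $fX$ for some function $f$. Then $\iota_X(d\alpha)^n = \iota_X \iota_Y \mu = f\,\iota_X\iota_X\mu = 0$, so $\alpha$ is the desired one-form. (This is essentially the computation already used in the proof of Proposition~\ref{belt}.)

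For the ``if'' direction, suppose $\alpha$ is a one-form with $\alpha(X) > 0$ and $\iota_X(d\alpha)^n = 0$. By Lemma~\ref{metric}, applied with any chosen volume form $\mu$, there is a metric $g$ with $\iota_X g = \alpha$ and induced Riemannian volume $\mu$. For this metric the curl $Y$ is defined by $\iota_Y \mu = (d\alpha)^n$, where indeed $d\alpha = d(\iota_X g)$ matches the curl's defining data. Then $\iota_X \iota_Y \mu = \iota_X (d\alpha)^n = 0$. The remaining step is the linear-algebra fact that, at each point, $\iota_X \iota_Y \mu = 0$ together with $X \neq 0$ forces $Y$ to be a multiple of $X$: for a nondegenerate top form $\mu$ on a vector space $V$, the contraction $\iota_X \iota_Y \mu = 0$ with $X \neq 0$ means $X \wedge Y$ is annihilated by $\mu$ after... more precisely, $\iota_Y(\iota_X\mu) = 0$ and $\iota_X \mu$ is a nonzero $2n$-form whose kernel (as a contraction) is exactly the line spanned by $X$, so $Y \in \langle X \rangle$; hence $Y = fX$ pointwise, and since $X$ is non-vanishing and $Y$ is smooth, $f$ is a smooth function. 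Thus $X$ is Beltrami for $g$.

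Finally, for the volume-preserving addendum: if $X$ additionally preserves a volume form $\mu$, one simply feeds exactly this $\mu$ into Lemma~\ref{metric} in the construction above, so that the Riemannian volume of the constructed metric $g$ is precisely the preserved volume $\mu$; the rest of the argument is unchanged. I expect the only genuinely substantive point to be the pointwise linear-algebra lemma identifying the kernel of the contraction $\iota_{(\cdot)}(\iota_X \mu)$ with the line $\mathbb{R}X$ --- everything else is a direct transcription of Gluck's argument and of computations already carried out in the excerpt. One subtlety worth noting is that, unlike the geodesible case, here $d\alpha$ need not vanish on $X$, only its $n$-th power contracted with $X$; this is exactly the slack that makes the Beltrami class strictly larger in dimension $2n+1 > 3$, and no step of the proof should inadvertently use the stronger vanishing.
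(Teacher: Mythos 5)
Your proposal is correct and follows essentially the same route as the paper: both directions use $\alpha=\iota_X g$, the defining relation $\iota_Y\mu=(d\alpha)^n$ for the curl, and Lemma~\ref{metric} to build the metric with prescribed dual form and Riemannian volume. The only difference is that you spell out the pointwise linear-algebra fact that $\ker\bigl(\iota_{(\cdot)}\iota_X\mu\bigr)=\langle X\rangle$, which the paper leaves implicit; your version of that step is correct.
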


\begin{proof}
Suppose there is such a one form. Using Lemma \ref{metric}, construct a metric $g$ such that $\iota_X g=\alpha$ and $\mu$ is the Riemannian volume. Then the vorticity field of $X$, denoted $Y$, satisfies 
$$ \iota_Y \mu = (d\alpha)^n, $$
where $\mu$ is the Riemannian volume. Contracting with $X$ and using the hypothesis we obtain that $\iota_X \iota_Y \mu=0$, which implies that $X$ is parallel to its curl and hence is Beltrami.

Conversely, if $X$ is parallel to its curl $Y$ we have that $\iota_X (\iota_Y \mu)=0$ and  $\iota_X (\iota_Y \mu)=\iota_X (d\alpha)^n$
where $\alpha=\iota_X g$.
\end{proof}

\subsection{Wilson plugs and obstructions}

Let us start by recalling Wilson's plug \cite{Wil}, used to prove the existence of non-vanishing vector fields without periodic orbits in $S^{2n+1}$ with $n>1$. We will follow the description in \cite{PPP}. \newline

\paragraph{\textbf{Standard Wilson's plug.}} We consider the manifold $W= [-2,2] \times \mathbb{T}^2 \times [-2,2] \times D^{n-4}$, and put coordinates $(z, \varphi_1, \varphi_2, r, x_1,...,x_{n-4})$. The manifold $W$ is embedded into $\R^n$ by a map $i: W \longrightarrow \R^n$ sending a point $p$ to 
$$(z, \cos{\varphi_1} (6+ (3+ r)\cos{\varphi_2}), \sin{\varphi_1} (6+ (3+r) \cos{\varphi_2}), (3+r)\sin{\varphi_2},x_1,...,x_{n-4}).$$

Let us denote $\mathbf{x}= (x_1,...,x_{n-4})$. We consider a vector field $X_W$ in $W$ with expression
$$X_W= f(z,r, \mathbf{x}) \left(\pp{}{\varphi_1} + b \pp{}{\varphi_2}\right) + g(z,r,\mathbf{x}) \pp{}{z}. $$

Choosing $b$ an irrational number and $f,g$ satisfying the following properties ensures that $W$ is a plug trapping the orbits entering through $\{z=-2, |r|\leq 1, |\mathbf{x}| \leq 1/2\}$. The properties satisfied by $f$ and $g$ are

\begin{itemize}
\item $f$ is skewsymmetric and $g$ is symmetric with respect to the $z$ coordinate,
\item $g\equiv 1$, $f \equiv 0$ close to the boundary of $W$,
\item $g \geq 0$ everywhere and vanishes only in $\{ |z|=1, |r| \leq 1, |\mathbf{x}|\leq 1/2 \}$,
\item $f\equiv 1$ in $\{ z \in [-3/2, -1/2], |r|\leq 1, |\mathbf{x}| \leq 1/2 \}$.
\end{itemize}

The same plug can be done using a manifold of the form $\tilde W= [-2,2] \times T^{n-2} \times [-2,2]$, and the trapped orbits wind around some components of the torus. The plug exist also in dimension three, however the invariant set is a circle that creates a new periodic orbit.

It is well known \cite{Gi} that Wilson's plug can be done volume-preserving, providing volume-preserving counterexamples to the generalized Seifert conjecture in $S^{2n+1}$ for $n\geq 2$. The first construction of this volume-preserving plug is due to G. Kuperberg \cite{Ku}. \newline

\paragraph{\textbf{Volume-preserving Wilson's plug.}}

In \cite[Section 2.3.1]{R1}, the explicit construction is done for three dimensions. Omitting details, let us recall the construction and give a explicit coordinate description for the case of any dimension.

Consider the manifold $P=T^{n-2}\times [1,2]\times [-1,1]$, endowed with coordinates $(\theta_1,...,\theta_{n-2},r,z)$. The first step is constructing a vector field of the form
$$X_P= H_1(r,z) + f(r,z)(\pp{}{\theta_1}+b\pp{}{\theta_2})$$
with $b$ an irrational constant number. Taking suitable functions makes $P$ a volume-preserving semi-plug (meaning that the entry and exit region do not coincide), which traps a set of zero measure isomorphic to $T^{n-2}$. This is done by taking the vector field $H_1= h_1(r,z) \pp{}{z} + h_2(r,z) \pp{}{r}$ such that $\iota_{H_1}\mu=dh$ for some volume $\mu$ of $[1,2]\times [-1,1]$ and function $h$. Taking a suitable $h$, the flow lines of $H_1$ look like in the Figure \ref{figH}, with a single singularity.
\begin{center}
\begin{figure}[!ht]
	\begin{tikzpicture}[scale=1]
     \node[anchor=south west,inner sep=0] at (0,0) {\includegraphics[scale=0.22]{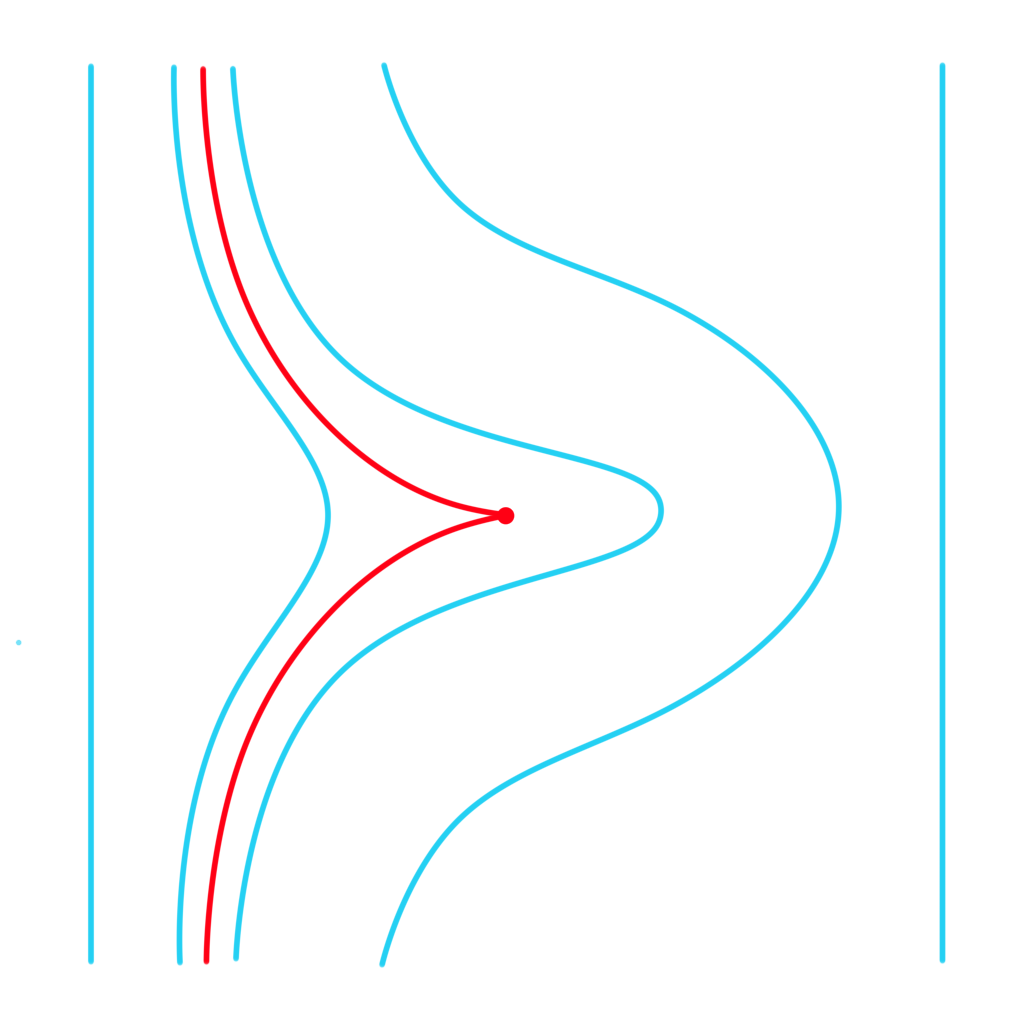}};
     
           \draw[->] (-1,0) -- (6,0) node[right] {$r$};
      \draw[->] (0,-1) -- (0,6) node[above] {$z$};
      
      \node at (0.53,-0.45) {$1$};
      \node at (0.53,0) {$|$};
      
      \node at (5.5,-0.45) {$2$};
      \node at (5.5,0) {$|$};
      
      \node at (-0.5,0.41) {$-1$};
      \node at (0,0.41) {$-$};
      
      \node at (-0.5,5.6) {$1$};
      \node at (0,5.6) {$-$};

   \end{tikzpicture}
\caption{Flow lines of $H$}
\label{figH}
\end{figure}
\end{center}
Choosing the function  $f:[1,2]\times [-1,1]\rightarrow \R^+$ such that it is zero on the boundary and positive at the singularity of $H$ is enough to make $P$ a semi-plug.  Taking the mirror-image to exchange the entry and exit regions and rescaling so that it fits $P$ yields a volume-preserving plug in $P$. Observe that again in coordinates $(r,z,\theta_1,...,\theta_{n-2})$ the constructed vector field in the plug (that we denote again $X_P$) is of the form 
\begin{equation} \label{plug}
X_P= \tilde h_1(r,z) \pp{}{z} + \tilde h(r,z) \pp{}{r} + \tilde f(r,z)(\pp{}{\theta_1}+b\pp{}{\theta_2}),
\end{equation}
and in the boundary we have $X_P|_{\partial P}= \pp{}{z}$. The function $\tilde h_1$ is positive everywhere except in the singularity, where it vanishes, but then $\tilde f$ is non-vanishing. The preserved volume is $\mu\wedge d\theta_1 \wedge ... \wedge d\theta_{n-2}$. \newline

\paragraph{\textbf{Obstructions to plugs}}
Sullivan's characterization of geodesible vector fields \cite{Sul} was used to prove that a vector field admitting a plug is not geodesible.

\begin{theorem*}[\cite{R1,PRT}]
Plugs are not geodesible in any dimension.
\end{theorem*}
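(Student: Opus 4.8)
The plan is to derive a contradiction from Sullivan's homological characterization of geodesible fields \cite{Sul}: a non-singular vector field on a closed manifold is geodesible precisely when no nonzero foliation cycle of the $1$-dimensional foliation it spans is a foliation boundary, i.e. a weak limit of boundaries of the $2$-dimensional currents obtained by sweeping a transverse arc along the flow. Equivalently, by Gluck's criterion recalled above, geodesibility produces a $1$-form $\alpha$ with $\alpha(X)>0$, $\iota_Xd\alpha=0$; after the unit-speed reparametrization one has $\alpha(X)=1$, $\mathcal L_X\alpha=0$, and $\alpha=ds+\beta$ with $\beta$ basic in every flow-box — the normal form one would try to contradict directly. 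Thus it suffices to produce, inside any plug, a nonzero foliation cycle that is a foliation boundary.

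First I would isolate what a plug gives dynamically: an orbit $\gamma$ which enters through the entry region and stays in the plug for all positive time. Its $\omega$-limit set $\Lambda$ is then a nonempty compact invariant set contained in the interior of the plug, and Krylov--Bogolyubov furnishes an invariant probability measure $m$ supported on $\Lambda$; let $z_m$ denote the associated foliation cycle, a nonzero positive closed $1$-current tangent to the flow and supported away from the boundary.

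The core step is to realize $z_m$ as a foliation boundary, and here I would exploit the asymmetry between trapped and traversing orbits that defines a plug. Pick a transverse arc $I$ in the entry region with one endpoint on a trapped orbit (so it accumulates on $\Lambda$, ideally on a uniquely ergodic minimal subset so that its Birkhoff averages converge to $z_m$) and the other endpoint on an orbit that traverses the plug and exits in finite time. Flowing $I$ forward produces foliated $2$-chains $S_T$ whose boundary consists of the flowed far end of $I$, the arc $I$ itself, the orbit segments through the two endpoints, and the arc along which the band leaves through the exit region. Normalizing by $T$ and letting $T\to\infty$, the arc-contributions and the finite traversing-orbit segment have uniformly bounded mass and vanish, while the trapped side converges to $z_m$; if the exit contribution can be made to vanish as well, this exhibits $z_m$ as a foliation boundary and contradicts Sullivan's criterion, so the plug — and any vector field containing an embedded copy of it — is not geodesible. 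The argument never refers to the dimension, only to the entry/exit architecture of the plug and the existence of a trapped orbit.

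The step I expect to be the main obstacle is the control of that exit contribution in the limit: orbits close to the trapping locus may traverse the plug arbitrarily slowly and wind arbitrarily far before exiting, so the exit arc can a priori have unbounded length and mass. Handling this is where one must leave the abstract ``trapped orbit'' picture and use the explicit normal form of the plug — for instance by choosing $I$ with its traversing endpoint uniformly away from the trapping locus, or by a finer estimate on how mass accumulates near the exit region — rather than the general trapping property alone.
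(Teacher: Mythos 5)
First, a caveat: the paper does not actually prove this statement; it is quoted from \cite{R1} and \cite{PRT}, with only the remark that Sullivan's characterization \cite{Sul} is the tool. So your proposal must be measured against those proofs. Your framework (Sullivan's criterion) is the right one, but the construction you propose does not close, and the obstacle you flag at the end is the heart of the matter, not a technicality. For the truncated swept chain $D_T$, the normalized boundary is, up to signs, $\tfrac1T(\text{top face})-\tfrac1T I-\tfrac1T\gamma_p^{[0,T]}+\tfrac1T\gamma_q^{[0,T_q]}$; the last three terms behave as you say (the $q$-term only because you truncate at the exit time), but the top face does not have mass $o(T)$. In Wilson's plug the points of $I$ just outside the trapped core take arbitrarily long to traverse and wind on the order of $T$ times in the torus directions before exiting, so at time $T$ the not-yet-exited part of the flowed arc has length comparable to $T$. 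Its normalized limit is itself a signed combination of foliation currents, with sign opposite to $z_m$ near the stopping set, so $\lim\tfrac1T\partial D_T$ is not a nonzero \emph{positive} foliation current and Sullivan's criterion yields no contradiction. Choosing the traversing endpoint ``uniformly away from the trapping locus'' does not help: $I$ must still cross the frontier of the trapped set, which is exactly where the slow, long-winding orbits live.

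The missing idea is that a plug traps orbits in \emph{both} time directions, and the proof behind the quoted result uses the resulting pair of asymptotic invariant sets rather than a single $\omega$-limit set. In Wilson's plug these are two parallel tori $T_\pm$ carrying the same irrational linear flow with opposite signs, $X|_{T_\mp}=\pm(\partial_{\varphi_1}+b\,\partial_{\varphi_2})$, joined by a flow-invariant (hence tangent) cylinder of heteroclinic orbits. If $\alpha(X)>0$ and $\iota_Xd\alpha=0$, then $\alpha|_{T_\pm}$ is closed (a $2$-form on $T^2$ annihilated by a non-vanishing vector field vanishes), and Stokes applied to the tangent cylinders swept out by the $\varphi_i$-circles at an intermediate level forces $[\alpha|_{T_+}]=[\alpha|_{T_-}]$ in $H^1(T^2;\R)$; but $\int_{T_\pm}\alpha(X)\,dm_\pm>0$ evaluates this common class against $\pm(1,b)$ with opposite signs, a contradiction. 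In Sullivan's language, what is realized as a tangent homology is a suitable convex combination $\lambda z_++(1-\lambda)z_-$ of the two oppositely oriented cycles, not your single $z_m$; indeed $z_m$ alone need not be a foliation boundary at all (its homology class in $H_1(M;\R)$ can be nonzero). So the target of your argument is wrong as well as its execution, and the entry/exit asymmetry you invoke must be replaced by the forward/backward trapping symmetry.
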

In \cite{PRT} the result was obtained for the class of Eulerisable fields.

\begin{theorem*}[\cite{PRT}]
Plugs are not Eulerisable in any dimension.
\end{theorem*}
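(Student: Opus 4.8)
I plan to argue by contradiction, using the characterization of Eulerisable fields in Lemma~\ref{Eulerchar}. Suppose a volume-preserving vector field $X$ on a closed manifold $M$ contains an embedded plug $P$ and is Eulerisable, so there is a one form $\alpha$ with $\alpha(X)>0$ and $\iota_X d\alpha=dB$ for some $B\in C^\infty(M)$. The starting point is that the Bernoulli function is a first integral: $X(B)=dB(X)=(\iota_X d\alpha)(X)=d\alpha(X,X)=0$. If $B$ is constant then $\iota_X d\alpha=0$, so $X$ is geodesible and volume-preserving, contradicting that plugs are not geodesible; hence $B$ is non-constant, and the point is to show that a \emph{continuous} first integral is incompatible with the trapping dynamics of a plug.

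The next step is to extract two features of $P$. First, $P$ contains a compact minimal invariant set $\Lambda$ — in Wilson's plug and its volume-preserving variant \eqref{plug}, an embedded torus carrying the irrational linear flow $\partial_{\theta_1}+b\,\partial_{\theta_2}$, whose orbits are dense — so the continuous first integral $B$ is constant on $\Lambda$, say $B|_\Lambda\equiv c$. Second, $P$ contains a forward-trapped orbit $\gamma\colon[0,\infty)\to P$, entering at $p_0\in\partial_-P$ with $\omega(\gamma)\subseteq\Lambda$; constancy of $B$ along $\gamma$ and continuity give $B(p_0)=\lim_{t\to\infty}B(\gamma(t))=c$. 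Finally, since $P$ is a plug \emph{inserted} into a product flow box, there is a one parameter family of genuine through-orbits $\gamma_s$, $s\in(0,1]$, entering at points $p_s\to p_0$ as $s\to 0$, with transit times $\tau_s\to\infty$, each $\gamma_s$ spending time $\tau_s-O(1)$ inside any prescribed neighbourhood of $\Lambda$ and winding around it an unbounded number of times.

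The contradiction should come from an integral estimate. On one hand $\int_{\gamma_s}\alpha=\int_0^{\tau_s}\alpha(X)(\gamma_s(t))\,dt\ge\delta\,\tau_s\to\infty$, where $\delta=\min_M\alpha(X)>0$. On the other hand, let $\Sigma_s$ be the surface swept by the orbit arcs $\gamma_t$, $t\in[s,1]$, from $p_t$ to the exit point; its boundary consists of $\gamma_s$, the fixed through-orbit $\gamma_1$, and two arcs of uniformly bounded length in $\partial_-P$ and $\partial_+P$, so Stokes gives $\int_{\gamma_s}\alpha=\int_{\Sigma_s}d\alpha+O(1)$. Since $\iota_X d\alpha=dB$ and $B$ is constant along orbits, on the orbit-swept surface $\Sigma_s$ the two form $d\alpha$ is controlled by the transverse differential of $B$; and because $B\to c$ along all of these arcs — they accumulate on $\Lambda$ and emanate from near $p_0$, where $B$ already equals $c$ — the integrand decays and $\int_{\Sigma_s}d\alpha$ should stay bounded, contradicting $\int_{\gamma_s}\alpha\to\infty$.

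The hard part is making this last bound rigorous: $\mathrm{Area}(\Sigma_s)$ itself grows like $\tau_s$ because $\gamma_s$ winds many times around $\Lambda$, so the crude estimate $|\int_{\Sigma_s}d\alpha|\le\|d\alpha\|_\infty\,\mathrm{Area}(\Sigma_s)$ is useless, and one must exploit that $d\alpha$ restricted to $\Sigma_s$ is essentially $dB$ wedged with the flow direction, together with the asymptotic constancy of $B$ there. The cleanest way to organize the argument is via the Sullivan-type characterization of Eulerisable fields from \cite{PRT}: the invariant measure on $\Lambda$ determines a foliation cycle which, because of the trapped orbit $\gamma$, bounds with the kind of vanishing flux that Eulerisability forbids, in exact parallel with the proof that plugs are not geodesible. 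Alternatively, one may try to restrict $X$ to the level set $\{B=c\}$, where the first Euler equation degenerates to $\iota_X d\alpha=0$ with $\alpha(X)>0$, exhibiting a geodesible sub-plug and reducing to the previous theorem — the difficulty then being that $c$ need not be a regular value of $B$, so the singular level set requires separate treatment.
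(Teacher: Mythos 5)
First, a remark on scope: the paper does not prove this statement at all --- it is quoted as a known result from \cite{PRT} --- so the only proof to measure yours against is the one in that reference, which runs through a Sullivan-type duality between foliation cycles and boundaries of chains tangent to the flow (in the spirit of \cite{Sul}), not through the direct Stokes computation you set up.

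Your proposal contains a genuine gap exactly where you flag it, and that flagged step is not a technicality to be ``made rigorous'' --- it is the entire content of the theorem. Your setup up to that point is correct: $B$ is a first integral, the constant-$B$ case reduces to non-geodesibility of plugs, $B$ equals $c$ on the closure of the trapped orbit, and $\int_{\gamma_s}\alpha\ge\delta\tau_s\to\infty$. But if you parametrize the swept surface by $\Phi(r,u)=\phi_{r\tau_u}(p_u)$ and use $\iota_Xd\alpha=dB$ together with the invariance of $B$ along orbits, the Stokes term is exactly
\begin{equation*}
\int_{\Sigma_s}d\alpha \;=\; \pm\int_s^1 \tau_u\,\frac{d}{du}\bigl(B(p_u)\bigr)\,du
\;=\;\pm\Bigl(\bigl[\tau_u\bigl(B(p_u)-c\bigr)\bigr]_s^1-\int_s^1\bigl(B(p_u)-c\bigr)\,\tau_u'\,du\Bigr),
\end{equation*}
so boundedness of $\int_{\Sigma_s}d\alpha$ requires a quantitative comparison between the rate at which $B(p_u)\to c$ and the rate at which $\tau_u\to\infty$ (equivalently, the blow-up of $\tau_u'$). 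Nothing in the hypotheses provides this: a smooth first integral constant on the limit set can perfectly well have $\tau_s\,|B(p_s)-c|\to\infty$, so the assertion that ``the integrand decays and $\int_{\Sigma_s}d\alpha$ should stay bounded'' is unsupported and the contradiction is not reached. Neither fallback repairs this. Invoking the Sullivan-type characterization of \cite{PRT} is essentially citing the theorem you are trying to prove, since that duality is where the real work lies; and restricting to $\{B=c\}$ fails because, as you yourself note, $c$ need not be a regular value, and even when it is, the level set is merely an invariant hypersurface containing the trapped orbit, with no reason to contain an embedded sub-plug to which the geodesible obstruction applies. The missing ingredient is precisely the current-theoretic input that converts the qualitative data (trapped orbit, diverging transit times, $B$ constant on orbit closures) into a bound dual to $\alpha$, which is what \cite{PRT} supplies and your surface-by-surface estimate cannot.
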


\subsection{Beltrami fields admitting plugs}
We proceed to construct a volume-preserving Beltrami field which admits the Wilson volume-preserving plug, and hence cannot be geodesible nor Eulerisable.

\begin{theorem} \label{main2}
There are volume-preserving Beltrami fields in any manifold of dimension $2n+1>3$ and any homotopy class of non-vanishing vector fields which are not geodesible nor Eulerisable.
\end{theorem}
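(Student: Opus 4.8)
The plan is to build on Theorem \ref{thm:exi}: start from a Reeb field of a stable Eulerisable structure in the prescribed homotopy class, destroy geodesibility and Eulerisability by inserting a volume-preserving Wilson plug, and retain a one-form certifying the Beltrami property through Lemma \ref{beltchar}. Fix $M$ of dimension $2n+1>3$ and a homotopy class $c$ of non-vanishing vector fields. Since homotopy classes of co-oriented hyperplane fields correspond bijectively to homotopy classes of non-vanishing vector fields, the construction in the proof of Theorem \ref{thm:exi} produces, in any prescribed class, a stable Eulerisable structure $(\alpha_0,\nu_0)$ with Reeb field $X_0$; moreover, near the binding, for $r$ close to $1$ one has $\alpha_0=d\theta$, and (choosing the auxiliary function $f$ to equal $1$ near $r=1$) $X_0=\partial_\theta$ on a region $V\cong S^1\times D^{2n}$, with $\mu_0=\alpha_0\wedge\nu_0$ of product form $d\theta\wedge\omega_0$, $\omega_0$ a volume form on the $D^{2n}$ factor. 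Inside $V$ I would fix a flow box (a ball) on which $X_0=\partial_z$, $\alpha_0=dz$, $\mu_0=dz\wedge\omega_0$. This is the \emph{specific normal form} the argument relies on.

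Into that flow box I would insert the volume-preserving Wilson plug. After rescaling, the plug $P$ of \eqref{plug} embeds in the flow box with its flow direction $\partial_z$ matching that of the box, and its preserved volume rescaled to agree with $\mu_0$ near $\partial P$; this is possible because the Wilson plug can be made volume-preserving (\cite{Ku,Gi}). Let $X$ equal $X_0$ on $M\setminus P$ and equal the plug field $X_P=\tilde h_1(r,z)\partial_z+\tilde h(r,z)\partial_r+\tilde f(r,z)(\partial_{\theta_1}+b\,\partial_{\theta_2})$ on $P$. Then $X$ is a smooth non-vanishing vector field (at the singular point of $X_P$ one has $\tilde h_1=0$ but $\tilde f>0$) preserving a volume $\mu$ which equals $\mu_0$ off $P$. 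Because $X$ contains an embedded plug, the theorems quoted above ("plugs are not geodesible", "plugs are not Eulerisable" \cite{R1,PRT}) show that $X$ is neither geodesible nor Eulerisable; it remains to exhibit the Beltrami one-form and to control the homotopy class.

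For the one-form, set $\alpha=\alpha_0$ on $M\setminus P$ and $\alpha=dz+u(r,z)\,d\theta_1$ on $P$, with $u\colon[1,2]\times[-1,1]\to[0,\infty)$ smooth, vanishing near $\partial P$ and strictly positive at the singular point of $X_P$. Since $u$ vanishes near $\partial P$ and $\alpha_0=dz$ there, $\alpha$ is globally defined and smooth. Then $\alpha(X)>0$: off $P$ it equals $\alpha_0(X_0)=1$, and on $P$ it equals $\tilde h_1+u\tilde f$, which is positive because $\tilde h_1\ge 0$ vanishes only at the singular point — where $u\tilde f>0$ — while $u\ge 0$ and $\tilde f\ge 0$ elsewhere. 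Moreover $\iota_X(d\alpha)^n=0$: off $P$ one has $\iota_{X_0}d\alpha_0=0$ since $X_0$ is the Reeb field of a stable Eulerisable structure, hence $\iota_{X_0}(d\alpha_0)^n=0$; on $P$ one has $d\alpha=du\wedge d\theta_1$, a decomposable two-form of rank at most two, so $(d\alpha)^n=0$ for every $n\ge 2$ and a fortiori $\iota_X(d\alpha)^n=0$. By Lemma \ref{beltchar}, $X$ is Beltrami for some metric $g$, and since $X$ preserves $\mu$ the metric can be taken so that $\mu$ is its Riemannian volume. This is exactly where the hypothesis $2n+1>3$ enters: for $n\ge 2$ the Beltrami equation $\iota_X(d\alpha)^n=0$ can be met by keeping $d\alpha$ of low rank, without imposing the stronger geodesibility condition $\iota_Xd\alpha=0$, whereas for $n=1$ the two conditions coincide.

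Finally, the passage from $X_0$ to $X$ is a modification supported in a ball and agreeing with $X_0$ near its boundary, so it changes the homotopy class by a fixed element of $\pi_{2n+1}(S^{2n})$ depending only on the (rescaled) plug model, not on $M$ or on $X_0$. Since Theorem \ref{thm:exi} yields stable Eulerisable structures — hence their Reeb fields — in every homotopy class, applying the construction to a Reeb field in the appropriately shifted class delivers a volume-preserving Beltrami field, neither geodesible nor Eulerisable, in the class $c$. I expect the main obstacle to be precisely the bookkeeping around the normal form: one must verify that the construction of Theorem \ref{thm:exi} really provides a flow box in which $\alpha_0=dz$ exactly (so that the rank-two one-form $dz+u\,d\theta_1$ glues across $\partial P$), and that the volume-preserving plug can be inserted there compatibly with $\mu_0$ and without uncontrolled change of homotopy class; the conceptual point — realizing $\iota_X(d\alpha)^n=0$ with $d\alpha$ of rank two rather than via $\iota_Xd\alpha=0$ — is then the short part.
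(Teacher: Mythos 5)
Your proposal is correct and follows essentially the same route as the paper: produce the Reeb field of a stable Eulerisable structure in the given class via Theorem \ref{thm:exi}, insert the volume-preserving Wilson plug in a flow box where $X=\partial_z$ and $\alpha=dz$, certify the Beltrami property through Lemma \ref{beltchar} using a one-form whose differential has rank at most two on the plug, and invoke the plug obstructions to geodesibility and Eulerisability. The only cosmetic differences are your choice $\alpha=dz+u\,d\theta_1$ in place of the paper's $\alpha_P=f\,d\theta_1+h_1\,dz$ (both give $\alpha(X)>0$ and $(d\alpha)^2=0$ on the plug) and your extra bookkeeping of the homotopy class, which is in fact unnecessary since the plug field is homotopic rel boundary to $\partial_z$ through non-vanishing fields (e.g.\ by the straight-line homotopy, as the $z$-component $h_1$ is nonnegative).
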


\begin{proof}
Consider $M$ any odd dimensional manifold of dimension $2n+1\geq 5$.

 Applying Theorem \ref{thm:exi}, we know it admits a stable Eulerisable structure $(\alpha,\nu)$ with geodesible Reeb field $X$ in an arbitrary homotopy class of non-vanishing vector fields. If we denote $(B,\pi)$ the open book decomposition we used to construct the structure, by construction there are points $p$ outside the trivial neighborhood of $B$ where the vector field and its connection form are $\pp{}{\theta}$ and $d\theta$. 

In a small neighborhood $U\cong \R^n$ we can take coordinates $(z, y_1,...,y_{2n})$ such that $X|_U= \pp{}{z}$ and $\alpha|_U= dz $.

Consider the manifold $P=T^{n-2}\times [1,2]\times [-1,1]$ of the previous section with its coordinates $(z,r, \theta_1,..., \theta_{n-2})$ and vector field of equation \eqref{plug}, of the form $X_P= h_1(r,z) \pp{}{z} + h_2(r,z) \pp{}{r} +  f(r,z)(\pp{}{\theta_1}+b\pp{}{\theta_2})$, where we have omitted the tildes of the functions to simplify the notation. Take the form 
$$ \alpha_P= f(z,r) d\theta_1 +  h_1(z,r)dz. $$
It satisfies $\alpha_P (X_P)= f^2 + h_1^2 >0$ at every point since the only points where $h_1$ vanishes, $f$ does not. In a neighborhood of the boundary of $P$, the form $\alpha_P$ coincides with $dz$. This implies that once $P$ is embedded in the neighborhood $U$, both the field $X_P$ and the form $\alpha_P$ can be extended as $X$ and $\alpha$ outside the embedded copy of $P$. By standard arguments (cf. \cite[page 78]{R1}) one can make sure that the volume preserved by $X_P$ coincides in the boundary of $P$ with $\alpha\wedge \nu$. Denote $\tilde X$ and $\tilde \alpha$ the vector field and one form of the plug extended as $X$ and $\alpha$ outside of it. If we further denote $\mu$ the volume given by extending the volume in the plug as $\alpha\wedge \nu$ outside of it, it is clear that $\tilde X$ preserves $\mu$.

Let us check that $\tilde X$ is, in addition to volume-preserving, a Beltrami field. Outside the embedded copy of $P$, we have $\tilde X=X$ and $\tilde \alpha=\alpha$. Hence we have
\begin{equation}\label{geod}
\iota_{X}d\alpha=0,
\end{equation}
which trivially implies $\iota_X (d\alpha)^n=0$. Inside $P$, we have that $\tilde \alpha= \alpha_P$. By looking the coordinate description of $\alpha_P$, we have
$$ d\alpha_P= \pp{f}{z} dz \wedge d\theta_1 + \pp{f}{r} dr \wedge d\theta_1 + \pp{h_1}{r}dr \wedge dz .$$
Hence $(d\alpha_P)^2=0$. Using Lemma \ref{beltchar} we deduce that $\tilde X$ is a volume-preserving Beltrami field for some metric. Crearly in $P$ the vector field cannot be geodesible, since we know plugs are not geodesible. The fact that $(d\alpha_P)^2=0$ is not a contradiction with the fact that $\tilde X$ is not geodesible. Computing the contraction of $\tilde X$ with $d\alpha_P$ we obtain
\begin{align*}
\iota_{\tilde X}d\alpha_P= (h_1(r,z)\pp{f}{z}+h_2(r,z)\pp{f}{r})  d\theta_1 + (h_2(r,z)\pp{h_1}{r}- f\pp{f}{z}) dz + (-f\pp{f}{r}-h_1(r,z)\pp{h_1}{r})dr  ,
\end{align*}
which is clearly not constantly zero. 
\end{proof}

\begin{Remark}
One can also use the standard Wilson's plug in theorem \ref{main2}.  It is not volume-preserving and so one constructs only an example of a Beltrami field (not volume-preserving) admitting a plug. It is not geodesible, and traps a set of orbits of positive measure (in the boundary of the plug).

\end{Remark}

Combining it with the obstruction to the existence of plugs, we deduce Theorem \ref{nongeod}. Furthermore, these vector fields cannot be "Reeb-embedded" in the sense of \cite{CMPP} into any contact manifold.

\begin{Remark}
In fact one can say even more. The vector fields produced by Theorem \ref{main2} cannot be embedded in any other manifold such that $X$ extends to an Eulerisable vector field.
Let $M$ be an odd dimensional manifold and $X$ a non geodesible Beltrami volume-preserving vector field. Assume that $M$ is embedded in a manifold $N$, where there is an Eulerisable vector field $Y$ such that $Y|_M=X$. Since $Y$ is Eulerisable, there is a one form $\alpha$ such that $\iota_Y \alpha>0$ and $\iota_Y d\alpha$ is exact. If we denote $e: M \rightarrow N$ the embedding, we have that the one form $e^*\alpha \in \Omega^1(M)$ satisfies that $e^*\alpha(X)>0$ and $\iota_X d(e^*\alpha)$ is exact. But then $X$ is Eulerisable in $M$ by Lemma \ref{Eulerchar}, which is a contradiction.
\end{Remark}

\subsection{Other remarks}

As additional observations, we present another source of examples of Beltrami fields and a property concerning the relation with geodesibility.

\begin{Example*}
Let $M$ be an odd dimensional manifold. If $\mathcal{F}$ is a codimension one foliation, $M$ admits a Beltrami field transverse to it. If furthermore the foliation was minimal, the Beltrami field is volume-preserving. Let us just explain the case when $\mathcal{F}$ is minimal. Denote $\alpha$ the defining form of $\mathcal{F}$. By a result of Sullivan \cite{S2}, there is a $2n$-form $\omega$ which is closed and positive in the leaves.  The vector field defined by $\iota_X\omega=0$ and $\alpha(X)=1$ preserves the volume form $\alpha\wedge \omega$. Observe that since $\alpha$ defines a foliation, we have $\alpha \wedge d\alpha=0$ implying $(d\alpha)^2=0$. By Lemma \ref{metric}, one can construct a metric such that $X$ is parallel to its curl and the Riemannian volume is $\alpha \wedge \omega$. These examples are irrotational, since their curl is vanishing. This follows from the fact that $(d\alpha)^2=0$.
\end{Example*}

Volume-preserving examples that arise from minimal foliations exist in every odd-dimensional manifold following the results in \cite{Mei}. The following observation was suggested by Daniel Peralta-Salas.

\begin{prop}
Let $X$ be a Beltrami field in a manifold $M$ of dimension $2n+1>3$.  If $\alpha=\iota_Xg$ is generic, in the sense that $d\alpha$ is of maximal rank almost everywhere, then $X$ is geodesible.
\end{prop}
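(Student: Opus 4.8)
The plan is to reduce the statement to Gluck's characterization of geodesible fields \cite{Gl}: I will show that the genericity hypothesis upgrades the Beltrami condition to the geodesibility condition $\iota_X d\alpha = 0$ on all of $M$, where $\alpha := \iota_X g$. Two facts are immediately available. First, since $X$ is non-vanishing, $\alpha(X) = g(X,X) > 0$. Second, by Lemma \ref{beltchar} (or directly: the curl $Y = fX$, so $\iota_X\iota_Y\mu = f\,\iota_X\iota_X\mu = 0$ while $\iota_Y\mu = (d\alpha)^n$), the Beltrami property gives $\iota_X (d\alpha)^n = 0$ everywhere. So everything comes down to passing from $\iota_X (d\alpha)^n = 0$ to $\iota_X d\alpha = 0$.

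The heart of the argument is a pointwise computation. Since $\dim M = 2n+1$ is odd, a $2$-form has rank at most $2n$, so ``maximal rank'' means rank exactly $2n$; let $\Omega \subset M$ be the open set where $d\alpha$ has rank $2n$, equivalently where $(d\alpha)^n \neq 0$. Fix $p \in \Omega$; then $\ker d\alpha_p$ is one-dimensional. Choosing a basis $e_0,\dots,e_{2n}$ of $T_pM$ with $e_0$ spanning $\ker d\alpha_p$ and $d\alpha_p = \sum_{i=1}^n e^{2i-1}\wedge e^{2i}$, one has $(d\alpha_p)^n = n!\, e^{1}\wedge\cdots\wedge e^{2n}$ and, for $v = \sum_i v^i e_i$,
\[
\iota_v (d\alpha_p)^n = n! \sum_{j=1}^{2n} (-1)^{j-1} v^{j}\, e^{1}\wedge\cdots\wedge \widehat{e^{j}}\wedge\cdots\wedge e^{2n}.
\]
Hence $\iota_v(d\alpha_p)^n = 0$ if and only if $v^{1} = \cdots = v^{2n} = 0$, i.e. if and only if $v \in \ker d\alpha_p$, i.e. if and only if $\iota_v d\alpha_p = 0$. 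Applying this with $v = X_p$ and using $\iota_X (d\alpha)^n = 0$ yields $\iota_X d\alpha = 0$ at every point of $\Omega$.

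It remains to globalize. The complement $M \setminus \Omega = \{\, (d\alpha)^n = 0 \,\}$ is closed and, by the ``almost everywhere'' hypothesis, has measure zero, hence empty interior; therefore $\Omega$ is dense in $M$. Since $\iota_X d\alpha$ is a smooth $1$-form that vanishes on the dense set $\Omega$, it vanishes identically on $M$. Together with $\alpha(X) > 0$, Gluck's criterion \cite{Gl} then shows that $X$ is geodesible, in fact with connection one-form $\alpha = \iota_X g$ itself. The only genuinely delicate point is this last step: the passage from ``$\iota_X d\alpha = 0$ almost everywhere'' to ``everywhere'' is not purely measure-theoretic but relies on the singular locus being \emph{closed}, so that full measure forces density, after which smoothness (continuity) of $\iota_X d\alpha$ finishes the job; the rest is the elementary linear-algebra normal form above.
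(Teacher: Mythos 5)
Your proof is correct and follows essentially the same route as the paper's: derive $\iota_X(d\alpha)^n=0$ from the Beltrami condition, use the pointwise linear-algebra fact that at points of maximal rank the kernel of contraction into $(d\alpha)^n$ coincides with $\ker d\alpha$, and conclude $\iota_X d\alpha\equiv 0$ by continuity before invoking Gluck's criterion. Your write-up is merely more explicit about the normal-form computation and about why the full-measure set of maximal-rank points is dense.
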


\begin{proof}
The curl of $X$ satisfies $\iota_Y \mu =(d\alpha)^n$ and since $X$ is Beltrami we know that $Y=fX$ for $f\in C^\infty(M)$. In particular, we deduce that $f\iota_X \mu=(d\alpha)^n$ and $f$ is non-vanishing almost everywhere by the genericity assumption. By contracting this equation with $X$, it follows that $X$ is in the kernel of $(d\alpha)^n$. Since $(d\alpha)^n$ is of maximal rank almost everywhere, it follows that $X$ is in the kernel of $d\alpha$ almost everywhere. Hence $\iota_Xd\alpha$ vanishes almost everywhere and by continuity $\iota_Xd\alpha\equiv 0$. By Gluck's characterization, we have that $X$ is geodesible.
\end{proof}

\section{Periodic orbits} \label{sec:Wei}

The constructed plug in Theorem \ref{main2} cannot be immediately used to prove the existence of Beltrami fields (volume-preserving or not) without periodic orbits in arbitrary manifolds. This is because the plug requires a specific expression of the connection form $\alpha$ in the neighborhood of the point where the plug is inserted. In this section we present the state of art on the existence of periodic orbits and prove that every manifold of dimension $2n+1>3$ admits a Beltrami field (not volume-preserving) without periodic orbits.

\subsection{The Weinstein conjecture} The Weinstein conjecture states that any Reeb vector field in a closed contact manifold has at least one periodic orbit. The conjecture is known to be true in dimension three \cite{T}, as well as for overtwisted contact structures in any dimension \cite{AH}. Concerning stable Eulerisable structures, it is known to be true in dimension three (where they coincide with stable Hamiltonian structures) in the following form.
\begin{theorem*} [\cite{HT}]
Let $M$ be a closed oriented three manifold with a stable Hamiltonian structure. If $M$ is not a $T^2$-bundle over $S^1$, then its Reeb field has a closed orbit.
\end{theorem*}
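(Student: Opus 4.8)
The plan is to adapt the Seiberg--Witten strategy that Taubes used to settle the three-dimensional Weinstein conjecture \cite{T}, in the form extended to stable Hamiltonian structures. Write the structure as $(\alpha,\nu)$ with $\nu$ a closed $2$-form on the closed oriented $3$-manifold $M$; the Reeb field $R$ is determined by $\iota_R\nu=0$ and $\alpha(R)=1$, and the stability condition $\ker\nu\subset\ker d\alpha$ is equivalent to $d\alpha=f\nu$ for some function $f\in C^\infty(M)$.

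First I would isolate the easy case where $\alpha$ is already contact. If $f$ is nowhere zero then $\alpha\wedge d\alpha=f\,\alpha\wedge\nu$ is a volume form, so $\pm\alpha$ is a contact form; moreover $\iota_R d\alpha=f\,\iota_R\nu=0$ together with $\alpha(R)=1$ shows that $R$ is exactly the associated contact Reeb field. Hence Taubes' theorem \cite{T} provides a closed orbit and we are done. The remaining, genuinely harder situation is that $f$ vanishes somewhere; note that the standard aperiodic examples on torus bundles have $f\equiv0$, i.e. $\alpha$ closed.

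For the general case I would pass to the symplectization. On $\R\times M$ the form $\Omega=\nu+ds\wedge\alpha$ satisfies $\Omega\wedge\Omega=2\,ds\wedge\alpha\wedge\nu>0$ and is therefore symplectic, and closed Reeb orbits of $R$ arise as the asymptotic ends of finite-energy pseudoholomorphic curves for a translation-invariant almost complex structure adapted to $\Omega$. The analytic engine is Taubes' deformation of the Seiberg--Witten equations on $M$ by a large parameter $r$, built from $\alpha$ and $\nu$: as $r\to\infty$ the solutions concentrate along a union of closed Reeb orbits. The existence of solutions for every $r$ is forced by the nonvanishing of the Seiberg--Witten Floer cohomology of $M$ (Kronheimer and Mrowka), for a $\mathrm{spin}^c$ structure whose first Chern class is built from $[\nu]$. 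Letting $r\to\infty$ and invoking Taubes' compactness then produces a nonempty limiting holomorphic current, whose ends are the desired closed orbits---provided this current is not empty.

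The hard part, and the step that I expect to be the main obstacle, is exactly controlling this limit. A priori the limiting current could consist only of closed components with no ends, in which case no Reeb orbit is produced; this failure can occur only when $[\nu]$ pairs degenerately with the chosen $\mathrm{spin}^c$ structure. I would then show that this degenerate alternative forces the structure to be essentially flat: $\alpha$ closed ($f\equiv0$), $\nu$ a nowhere-zero closed $2$-form whose kernel foliation consists of parallel $2$-tori, so that $M$ fibers over $S^1$ with $T^2$ fibers. Since by hypothesis $M$ is \emph{not} a $T^2$-bundle over $S^1$, this case is excluded, the limiting current must have an end, and $R$ therefore has a closed orbit. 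The technical heart---Taubes' asymptotic analysis of the perturbed Seiberg--Witten equations and the compactness theorem identifying limits with holomorphic currents---is inherited from the contact case; the genuinely new content over the Weinstein conjecture is the case analysis that pins down the torus-bundle exception.
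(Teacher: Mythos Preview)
The paper does not contain a proof of this theorem at all: it is quoted from \cite{HT} as a known result, without argument, to motivate the discussion of periodic orbits in Section~\ref{sec:Wei}. There is therefore no ``paper's own proof'' to compare against.

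That said, your sketch is a reasonable high-level outline of the actual Hutchings--Taubes strategy, though a few points are imprecise. The case split in \cite{HT} is not quite ``$f$ nowhere zero versus $f$ vanishing somewhere''; rather it is organized around whether the cohomology class of $\nu$ (or more precisely the relevant $\mathrm{spin}^c$ data) is torsion or non-torsion, and the analysis in the degenerate case does not directly force $\alpha$ to be closed. Your description of the failure mode (``the limiting current could consist only of closed components with no ends'') is not how the obstruction manifests: the issue is rather that for certain torsion $\mathrm{spin}^c$ structures the a priori energy bound needed for Taubes' compactness can fail, and the separate argument ruling this out is what produces the $T^2$-bundle exception. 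So the skeleton is right---Taubes-style Seiberg--Witten asymptotics plus a classification of the degenerate case---but the joints are placed slightly differently from where you have them. None of this is relevant to the present paper, which simply invokes the result.
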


A counterexample in the $T^2$-bundle over $S^1$ is provided by taking the mapping torus of an irrational rotation in $T^2$. This counter example generalizes to any dimension for stable Eulerisable structures. Even if we defined stable Eulerisable structures in odd dimensions, since it is the natural set for the study of Beltrami fields, the definition makes sense also in even dimensions.

\begin{Claim}
Let $N$ be a closed manifold of dimensions $n\geq 2$ such that $\chi(N)=0$. Then there is a $N$-bundle over $S^1$ endowed with a stable Eulerisable structure such that its Reeb field has no periodic orbits.
\end{Claim}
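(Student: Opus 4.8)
The plan is to generalize the three-dimensional counterexample---the mapping torus of an irrational rotation on $T^2$---by replacing $T^2$ with a suitable flow on $N$. Since $\chi(N)=0$, the manifold $N$ admits a nowhere-vanishing vector field $V$; by a standard result (Wilson, or the work on the Seifert conjecture), we may even take $V$ to have \emph{no closed orbits} when $\dim N\geq 3$, and when $\dim N=2$ we take $V$ to be an irrational flow on $T^2$ (the only closed surface with $\chi=0$ is $T^2$ or the Klein bottle; in either case an aperiodic flow exists). Let $\varphi_t$ be the flow of $V$ and form the mapping torus $M=(N\times\R)/(x,s)\sim(\varphi_1(x),s+1)$, which is an $N$-bundle over $S^1$ with coordinate $\theta\in\R/\Z$ transverse to the fibers.

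The key step is to equip $M$ with a stable Eulerisable structure whose Reeb field is the suspension vector field $R$ generated by $\partial_\theta$ (the vector field tangent to the flow direction of the suspension). First I would set $\alpha$ to be the closed one-form $d\theta$ pulled back to $M$; then $\alpha(R)=1$ and $d\alpha=0$, so the condition $\ker\nu\subset\ker d\alpha$ is automatic no matter what $\nu$ is. For $\nu$ I would take a fiberwise volume form on $N$ that is invariant under $\varphi_1$: when $\dim N\geq 3$ one can choose $V$ to be volume-preserving (the aperiodic plug constructions can be done volume-preserving, by G.\ Kuperberg), so a $\varphi_t$-invariant volume $\omega_N$ exists and descends to a closed $2n$-form $\nu$ on $M$ restricting to a positive volume on each fiber; when $\dim N=2$ the irrational rotation preserves the area form. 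Then $\alpha\wedge\nu=d\theta\wedge\nu>0$, $d\nu=0$, and $\iota_R\nu=0$ because $\nu$ is a fiberwise form and $R$ points along $\theta$. Thus $(\alpha,\nu)$ is a stable Eulerisable structure with Reeb field $R$.

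Finally, $R$ has no periodic orbit: a closed orbit of $R$ would project to the base $S^1$ with some winding number $k\geq 1$, hence correspond to a periodic point of the return map $\varphi_k=\varphi_1^{\,k}$, i.e.\ a point $x$ with $\varphi_k(x)=x$; but an aperiodic flow $\varphi_t$ has no periodic points for any positive time (if $\dim N\geq 3$ this is the aperiodicity of the plug; if $N=T^2$ the rotation by an irrational angle $(\beta_1,\dots)$ with rationally independent coordinates has no periodic points of any integer return time). This contradiction shows $R$ is aperiodic. The main obstacle I anticipate is the low-dimensional bookkeeping: ensuring that for every closed $N$ with $\chi(N)=0$ one simultaneously gets a nowhere-zero, volume-preserving, \emph{aperiodic} flow---this is immediate for $n\geq 3$ from the volume-preserving Seifert counterexamples but must be handled by hand for $n=2$ (and one should note that for the non-orientable case $N=$ Klein bottle, orientability of $M$ and of $\nu$ needs a brief check, or one simply restricts to orientable $N$ since the torus already realizes the only interesting $2$-dimensional example). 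Everything else reduces to the routine verification above that $(d\theta,\nu)$ satisfies the three defining conditions.
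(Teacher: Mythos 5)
Your mapping-torus construction is the right one, and your verification that $(d\theta,\nu)$ is a stable Eulerisable structure with Reeb field $\partial_\theta$, as well as the reduction of closed orbits of the suspension to periodic points of the return map, are all correct. The genuine gap is in the input you feed into it: you insist on suspending the time-$1$ map of an \emph{aperiodic flow} on $N$, and such a flow need not exist. For $N$ the Klein bottle there is none at all (by Kneser's theorem every nowhere-vanishing flow on the Klein bottle has a periodic orbit), so your parenthetical ``in either case an aperiodic flow exists'' is false, and ``restricting to orientable $N$'' does not prove the claim as stated. For $\dim N=3$ --- where \emph{every} closed manifold has $\chi(N)=0$, so this is not a marginal case --- the existence of a smooth volume-preserving aperiodic vector field is not known: G.~Kuperberg's volume-preserving counterexample to the Seifert conjecture is only $C^1$, so ``immediate from the volume-preserving Seifert counterexamples'' does not apply. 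Even for $\dim N\geq 4$ you would still need to produce, on an arbitrary $N$ with $\chi(N)=0$, a volume-preserving field with finitely many closed orbits sitting in flow boxes where the volume-preserving plug can be inserted; that is a construction, not a citation.

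The fix is to weaken the input, which is what the paper does: one only needs a volume-preserving \emph{diffeomorphism} $\varphi:N\to N$ without periodic points, not the time-one map of any flow. By Plante \cite{Pl} and Watanabe \cite{Wa}, such a $\varphi$ exists on every closed $N$ with $\chi(N)=0$ (for the Klein bottle one can take an irrational translation in the orientation-preserving direction, which is area-preserving and has no periodic points although it is not the time-one map of an aperiodic flow). Suspending this $\varphi$ and running your verification verbatim, with $\nu$ the $\varphi$-invariant fibre volume, gives the claim in full generality; a closed orbit of $\partial_\theta$ would be a periodic point of $\varphi$ directly, with no detour through a flow on $N$.
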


\begin{proof}
Following \cite{Pl} and \cite{Wa}, any manifold such that $\chi(N)=0$ admits a volume-preserving diffeomorphism $\varphi:N \rightarrow N$ without periodic points. Consider the suspension of this diffeomorphism, i.e. the manifold $M= N\times I / \sim$ where we identified $(p,0)$ with $(\varphi(p),1)$. If we denote $t$ a coordinate in $I$, it induces a coordinate $\theta$ in $M$. The vector field $X=\pp{}{\theta}$ has no periodic orbits, and preserves a volume form $\mu$ since $\varphi$ was volume-preserving. It is the Reeb field of the stable Eulerisable structure $(d\theta, \iota_X\mu)$.
\end{proof}

The fact that geodesible fields do not admit plugs, as well as the Weinstein conjecture for stable Hamiltonian structures, motivates the idea that some version of the Weinstein conjecture could be true for stable Eulerisable structures in high dimensions.

In the non volume-preserving case, it was proved in \cite{R2} the following positive result, with the assumption that both the vector field and the metric making its orbits geodesics are real analytic.

\begin{theorem*}
Let $M$ be a closed oriented $3$-manifold which is not a torus bundle over the circle. Then any real analytic geodesible (or equivalently Beltrami) field has a periodic orbit.
\end{theorem*}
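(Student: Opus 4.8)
The plan is to combine the rigidity of real-analytic objects with the Weinstein conjecture in the contact regime. Since periodic orbits depend only on the orbit foliation, I would first replace $X$ by its unit-length reparametrization for the geodesic metric $g$; this is again real-analytic, has the same orbits, and for it the dual form $\alpha=g(X,\cdot)$ satisfies $\alpha(X)=1$ together with the geodesibility equation $\iota_X d\alpha=0$ from Gluck's characterization \cite{Gl}. Consequently $\mathcal{L}_X\alpha=d(\alpha(X))+\iota_X d\alpha=0$, and $\mathcal{L}_X(d\alpha)=0$, so both $\alpha$ and the closed $2$-form $\omega:=d\alpha$ are invariant under the flow. Because $X$ lies in the kernel of $\omega$ everywhere, $\omega$ has rank $0$ or $2$ at each point, and by real-analyticity its zero set $Z:=\{\omega=0\}$ is either all of $M$ or a proper analytic subvariety; moreover $\mathcal{L}_X\omega=0$ makes $Z$ flow-invariant. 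This yields a natural trichotomy: $Z=\emptyset$, $Z=M$, or $Z$ a nonempty proper analytic subset.

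When $Z=\emptyset$, the restriction of $\omega$ to $\ker\alpha$ is nondegenerate (as $\ker\omega=\langle X\rangle$ there), so $\alpha\wedge\omega$ is a volume form: $\alpha$ is a contact form and $X$ is precisely its Reeb field. A periodic orbit then exists by the three-dimensional Weinstein conjecture, proved by Taubes \cite{T}. When $Z=M$, i.e.\ $\omega\equiv0$, the form $\alpha$ is a nonvanishing closed analytic $1$-form, so $\ker\alpha$ integrates to a codimension-one analytic foliation $\mathcal{F}$ transverse to $X$. Here I would split on the behaviour of the leaves. If $\mathcal{F}$ has a compact leaf, then by Sacksteder's theorem (no exceptional minimal sets for $C^2$, hence analytic, codimension-one foliations) all nearby leaves are compact and $M$ fibers over $S^1$ with fiber a closed \emph{orientable} surface $F$ (orientable because $M$ is oriented and $\alpha$ coorients $\mathcal{F}$); the flow induces a first-return diffeomorphism $\phi\colon F\to F$. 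Since $M$ is not a torus bundle over the circle, $F\neq T^2$ and $\chi(F)\neq0$, and applying the Lefschetz fixed-point theorem to the iterates $\phi^k$ produces a periodic point of $\phi$, hence a periodic orbit of $X$.

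The main obstacle is everything that remains: the minimal (dense-leaf) case in the closed regime and, above all, the mixed case in which $Z$ is a nonempty proper invariant analytic subvariety. In the mixed case $X$ is a genuine Reeb field of the contact form $\alpha$ on the open set $U:=M\setminus Z$, while along $Z$ the contact condition degenerates and $\ker\alpha$ becomes tangentially integrable, so neither Taubes' theorem (which needs a closed contact manifold) nor the foliation argument applies directly. My strategy is twofold. First, $Z$ is a flow-invariant analytic set of dimension at most two whose strata are themselves invariant; I would look for a periodic orbit inside $Z$ by induction on the dimension of the strata, using that the restricted flow on a one-dimensional invariant piece is essentially a closed orbit and that a Poincaré--Bendixson-type analysis of the analytic flow on a two-dimensional invariant stratum yields a fixed point, a periodic orbit, or a compact leaf. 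Second, if no periodic orbit is found inside $Z$, I would study how the Reeb dynamics on $U$ accumulate onto $Z$ and assemble the integrable behaviour along $Z$ with the contact behaviour on $U$ into a single codimension-one object, then invoke the rigidity of real-analytic foliations—Sacksteder's theorem to exclude exceptional minimal sets and Haefliger's theorem to constrain the holonomy—to force either a periodic orbit or the excluded torus-bundle structure.

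I expect the crux to be precisely this interface between the contact region $U$ and the degeneracy locus $Z$, together with the dense-leaf subcase: ruling out, by analyticity alone, an aperiodic minimal set that straddles $U$ and $Z$ or that sits inside a minimal transverse foliation. This is exactly where the real-analytic hypothesis is indispensable—it is what converts soft dynamical pathology into the rigid dichotomy above—and it is consistent with the fact, noted earlier in the paper, that the smooth three-dimensional case remains open.
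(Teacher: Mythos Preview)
This theorem is not proved in the paper; it is cited from Rechtman \cite{R2}. Your trichotomy on $Z=\{d\alpha=0\}$ is exactly the right framework and matches Rechtman's, and the contact case $Z=\emptyset$ via Taubes is correct, but both remaining cases have genuine gaps.

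In the closed case $Z=M$, invoking Sacksteder is misplaced (closed-$1$-form foliations have trivial holonomy; compact-leaf propagation is Reeb stability) and, more seriously, you leave the dense-leaf subcase open. The correct tool is Tischler's theorem: a closed nonvanishing $1$-form can be $C^0$-approximated by one with rational periods, so $M$ fibers over $S^1$ with fibers transverse to $X$ (as $\alpha(X)=1$) regardless of whether the leaves of $\ker\alpha$ are compact or dense, and the first-return argument on the fiber then runs uniformly. In the mixed case your first strategy is better than you give it credit for: since $X$ is nonvanishing there are no $0$-strata of $Z$, so any $1$-stratum is a closed invariant curve, i.e.\ a periodic orbit, and each $2$-stratum is then a closed invariant surface of zero Euler characteristic---a torus (or Klein bottle). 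The remaining hard case is therefore $Z$ a union of invariant tori carrying irrational linear flow. Here your second strategy fails outright: the contact structure on $U=M\setminus Z$ is maximally non-integrable and cannot be merged with the foliated locus into any global codimension-one foliation on which Sacksteder or Haefliger would bite. What is actually needed is to cut $M$ along these tori and analyze the resulting contact pieces with invariant torus boundary, bringing in $3$-manifold topology (this is precisely where the torus-bundle exclusion enters) together with results on Reeb dynamics on such pieces; that structural analysis is the substance of \cite{R2} and is absent from your outline.
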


The smooth case is still open. We will prove in the next subsection that, in the high dimensional setting, there always exist Beltrami fields without periodic orbits.

\subsection{Aperiodic Beltrami fields using round Morse functions}

In \cite{As}, Asimov introduced round handle decompositions and proved that every manifold of dimension at least $4$ satisfying $\chi(M)=0$ admits such a decomposition. This concept was later related to the existence of round Morse functions, introduced by Thurston \cite{Th}.

\begin{defi}
A \textbf{round Morse function} on a smooth manifold $M$ is a function $f:M\rightarrow \R$ such that:
\begin{itemize}
\item the set of critical points of $f$ is a disjoint union of circles,
\item the corank of $f$ in a critical point is $1$.
\end{itemize}

\end{defi}

The existence of such a function is equivalent to the existence of a round handle decomposition, a fact that was rigorously proved by Miyoshi in \cite{Mi}. Miyoshi obtained a round Morse lemma, where one can have standard Morse coordinates or twisted Morse coordinates. However, it is always possible to find a round Morse function without twisted critical circles, and hence we only state the untwisted case.

\begin{lemma}[Untwisted Round Morse Lemma]
Let $f:M^{n+1} \rightarrow \R$ be a round Morse function without twisted singular circles. Then there exist global coordinates $(\theta,x_1,...,x_{n})$ in a neighborhood $U= S^1 \times D^n$ near any critical circle $C$ such that
$$f=-x_1^2-...-x_r^2+x_{r+1}^2+...+x_n^2,$$
where $r$ is the index of the critical circle.
\end{lemma}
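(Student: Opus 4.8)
The plan is to recognize a critical circle $C$ of the round Morse function $f$ as a nondegenerate critical submanifold in the Morse--Bott sense, apply the Morse--Bott normal form transversally to $C$, and then use the untwisted hypothesis to globalize the transverse coordinates around the circle.

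First I would note that $f$ is constant on $C$: the circle $C$ is connected and $df$ vanishes identically along it, so $f\equiv c$ on $C$ for some $c\in\R$. At a point $p\in C$ the Hessian $\mathrm{Hess}_p f$ is a well-defined symmetric bilinear form on $T_pM$, and differentiating $df\equiv 0$ along $C$ shows $T_pC\subset \ker \mathrm{Hess}_p f$. The corank-one hypothesis forces $\ker \mathrm{Hess}_p f$ to be exactly one-dimensional, hence equal to $T_pC$. Therefore $\mathrm{Hess}_p f$ descends to a nondegenerate quadratic form on the normal space $\nu_p(C)=T_pM/T_pC$ of signature $(n-r,r)$, and the index $r$ is constant along the connected circle $C$. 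In other words $C$ is a nondegenerate critical circle of $f$.

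Then I would invoke the Morse--Bott lemma: there is a tubular neighborhood of $C$ diffeomorphic to a disk subbundle of the normal bundle $\nu(C)\to C$, a fiber metric, and an orthogonal splitting $\nu(C)=\nu^-\oplus\nu^+$ into the negative and positive eigenbundles of the transverse Hessian, in terms of which $f = c - \|v^-\|^2 + \|v^+\|^2$. This is the parametrized Morse lemma, proved fiberwise over $C$ by the usual homotopy/Moser deformation argument, compactness of $C$ guaranteeing a uniform neighborhood. It remains to trivialize $\nu^\pm$ over $C\cong S^1$ compatibly with the quadratic form. A real vector bundle over $S^1$ is trivial if and only if it is orientable, i.e. if and only if its structure group reduces to the identity component of the orthogonal group; the holonomies of $\nu^-$ and $\nu^+$ around $C$ lie in $O(r)$ and $O(n-r)$, and the hypothesis that $C$ is untwisted is precisely that these holonomies lie in $SO(r)$ and $SO(n-r)$. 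Hence $\nu^-\cong S^1\times\R^r$ and $\nu^+\cong S^1\times\R^{n-r}$; choosing on each fiber a continuously varying orthonormal basis (for instance via the positive square root of the fiber metric, which depends continuously on the base point) puts the fiber metric, and therefore the transverse Hessian, in standard diagonal form, up to a positive rescaling of each coordinate. Writing $(x_1,\dots,x_r)$ and $(x_{r+1},\dots,x_n)$ for the resulting coordinates on $\nu^-$ and $\nu^+$, $\theta$ for the coordinate on $S^1$, and restricting to the unit disk bundle $U=S^1\times D^n$, one gets $f = c - x_1^2 - \dots - x_r^2 + x_{r+1}^2 + \dots + x_n^2$; subtracting the critical value $c$ gives the claimed normal form.

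The main obstacle is the last step, the globalization around the circle: locally the ordinary Morse lemma applied to a transverse slice already produces the diagonal form, so the real content is that these local coordinates patch into coordinates defined on all of $S^1\times D^n$, and this is exactly where the absence of twisted critical circles enters, through the triviality (orientability) over $S^1$ of the eigen-subbundles of the transverse Hessian. A secondary technical point is establishing the Morse--Bott normal form with the explicit $\nu^-\oplus\nu^+$ splitting, but this is classical and the fact that the critical set is merely a circle makes the deformation argument routine.
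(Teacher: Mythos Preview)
The paper does not prove this lemma; it is quoted from Miyoshi \cite{Mi} and used as a black box in the proof of Theorem~\ref{aperio}. So there is no argument in the paper to compare against.

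Your sketch is the standard and correct route: recognize $C$ as a Morse--Bott critical submanifold (the corank-one hypothesis forces $\ker\mathrm{Hess}_pf=T_pC$), apply the Morse--Bott lemma to obtain $f=c-\|v^-\|^2+\|v^+\|^2$ on a disk subbundle of $\nu(C)=\nu^-\oplus\nu^+$, and then trivialize $\nu^\pm$ over $S^1$ using the untwisted hypothesis, so that the fiber metrics become the standard Euclidean ones. Two small remarks. First, the displayed formula in the paper omits the additive constant $c$; this is the usual abuse (replace $f$ by $f-c$), and your observation that one must subtract the critical value is correct but harmless. Second, the phrase ``positive square root of the fiber metric'' is not quite the mechanism you want: once $\nu^-$ and $\nu^+$ are known to be trivial (equivalently orientable, since rank-$k$ real bundles over $S^1$ are classified by $\pi_0(O(k))$), you pick any smooth global frame and run Gram--Schmidt fiberwise to obtain a global orthonormal frame, which diagonalizes the quadratic form. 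With that adjustment the argument goes through.
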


The well known relation between round Morse functions and Morse-Smale flows provides a starting point to construct aperiodic Beltrami flows.

\begin{proof}[Proof of Theorem \ref{aperio}]

 Taking into the account previous discussions, we only need to construct a vector field $X$ satisfying the following three properties:
\begin{enumerate}
\item There is a one form $\alpha$ such that $\alpha(X)>0$ and $\iota_X(d\alpha)^n=0$,
\item $X$ has a finite number of periodic orbits,
\item for every periodic orbit $\gamma$, there is a point $p\in \gamma$  and a coordinate $z$ in a neighborhood $U$ of $p$ such that $X|_U=\pp{}{z}$ and $\alpha|_U=dz$.
\end{enumerate}
If we achieve this, inserting a plug in each neighborhood of the point $p$ of each periodic orbit yields a vector field without periodic orbits and a one form $\beta$ such that $\beta(X)>0$ and $\iota_X(d\beta)^n=0$ and hence by Lemma \ref{beltchar} the vector field $X$ is a Beltrami field.

\paragraph{\textbf{Construction of a vector field satisfying (1)-(3).} \newline} 

Take a round Morse function $f$ without twisted components and a metric which is "nice" in the neighborhood of the finite amount of critical circles: i.e. it looks like the standard metric in $S^1 \times D^{2n}$ with the round Morse Lemma coordinates. Then the gradient defined by $f$ is the vector field satisfying $g(X,\cdot)=df$, and is a vector field without periodic orbits but with the set of fixed points being the critical circles of $f$. We will do a modification of this vector field around the critical circles to obtain a vector field with a finite amount of periodic orbits and construct a one form satisfying $(1)$ and $(3)$. Consider one of the critical circles $\gamma$ of $f$, let us assume that it is a maximum, since everything works analogously on each critical circle.

\textbf{Step 1: around the orbit.}

In the neighborhood $U=S^1 \times D^{2n}$ with coordinates $(\theta,x_1,...,x_{2n})$, we assume that the metric is the standard $g=d\theta^2+ \sum_{i=1}^{2n} dx_i^2$. Hence the gradient of $f$ has the following expression.
$$\operatorname{grad}(f)=\sum_{i=1}^{2n} x_i \pp{}{x_i},$$
where $f=\sum_{i=1}^{2n} x_i^2$. Take the function $\rho= \sum_{i=1}^{2n} x_i^2$ (independently of the index of the critical circle of $f$), and $\varphi(\rho)$ is a bump function which is constantly equal to $1$ around $\rho=0$ and $0$ around $\rho=1$. We can now modify the gradient of $f$, taking instead 
$$ X= \varphi(\rho) \pp{}{\theta} + \operatorname{grad}(f), $$
which has a single periodic orbit at $S^1 \times \{0 \}$.
Construct the one form 
$$ \alpha_\gamma= \varphi(\rho) d\theta + df, $$
which satisfies $\alpha_\gamma(X)>0$, $\alpha_\gamma|_{\partial U}=df$. Computing its exterior derivative we have
$$ d\alpha_\gamma= \varphi'(\rho) d\rho \wedge d\theta, $$
which satisfies $(d\alpha_\gamma)^2=0$. In a small neighborhood $V$ of the orbit where $\varphi(\rho) \equiv 1$ we have $\alpha_\gamma= d\theta + df$. The form $\alpha_\gamma$ extends outside of $U$ as $df$. Denote $\alpha$ the one-form which is $df$ outside the neighborhoods of the critical circles and the constructed $\alpha_\gamma$ on them. Doing this at every orbit, we obtain a vector field $X$ with a finite number of periodic orbits and a one form $\alpha$ satisfying $\alpha(X)>0$ and $(d\alpha)^2=0$. Only condition (3) is left to check.  Figure \ref{circle} depicts schematically the modification for a critical circle with arbitrary index.
\begin{center}
\begin{figure}[!ht]
	\begin{tikzpicture}[scale=1]
     \node[anchor=south west,inner sep=0] at (0,0) {\includegraphics[scale=0.22]{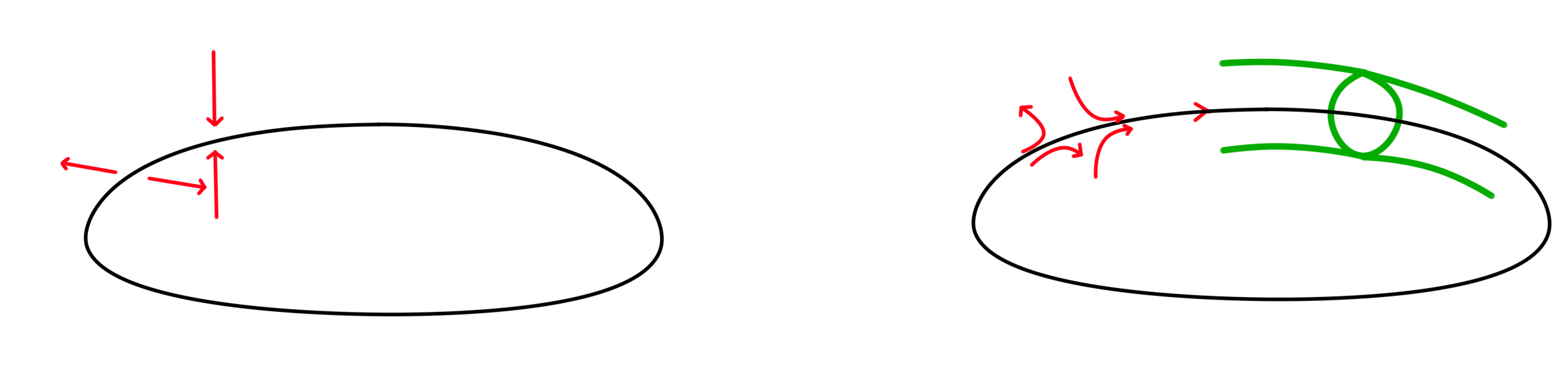}};
\node[red] at (0.4,2.5,0) {$\operatorname{grad} f$};
\node[red] at (10.3,2.9,0) {$X$};
\node[ForestGreen] at (13,3,0) {$V$};
\node[ForestGreen] at (15.3,2,0) {$\alpha|_V=d\theta + df$};

\node at (2.8,0.8,0) {$\gamma$};

\draw [->] (6.7,1.6,0)--(8.2,1.6,0);

   \end{tikzpicture}
\caption{Modification around circle}
\label{circle}
\end{figure}
\end{center}

\textbf{Step 2: around a point.}

As mentioned in the previous step, in the neighborhood $V$ of the orbit we can now assume $X=\pp{}{\theta} + \operatorname{grad}f$ and $\alpha= d\theta + df$. Around a point $p$ in the orbit $S^1\times D^{2n}$, the $S^1$-coordinate $\theta$ defines a function $z$. Hence there are coordinates $(z,x_1,...,x_{2n})$ on a small neighborhood $U$ such that $X|_U= \pp{}{z} + \sum_{i=1}^{2n} x_i \pp{}{x_i}$ and $\alpha|_U= dz + df$. By taking the neighborhood small enough, we can assume that there is a function $h$ such that $X=\pp{}{h}$ by the flow box theorem. Denote $r=z^2 + x_1^2 + ...+ x_{2n}^2$. Take $\varphi(r)$ a bump function which is $1$ around $0$ and vanishes around $1$. Construct the one form
$$  \beta =  \varphi(r) dh + (1-\varphi(r))[dz + df]. $$
Contracting it with $X$ we have that
\begin{align*}
 \beta (X)&= \varphi(r) dh(X) + (1-\varphi(r))[dz+df](X) \\
&= \varphi(r) dh(\pp{}{h}) + (1-\varphi(r)) dz(\pp{}{z} + \operatorname{grad}f) + (1-\varphi(r)) df(\pp{}{z} + \operatorname{grad}f)  \\
&= \varphi(r) + (1-\varphi(r))[1+ df(\operatorname{grad} f)].
\end{align*} 
Since $df(\operatorname{grad} f)$ is positive except at $r=0$, we deduce that $ \beta(X)>0$. Furthermore, we have $d \beta= \varphi' dr \wedge dh - \varphi' dr \wedge dz - \varphi' dr \wedge df$ which implies $(d \beta)^2=0$ and coincides with $dz+df$ on $\{r=1\}$. Denote again $\alpha$ the form $\beta$ extended as $\alpha$ outside the neighborhood $U$.
Hence in a neighborhood of $r=0$ where $\varphi(r)\equiv 1$ we have that $X|_U = \pp{}{h}$ and $\beta|_U=dh$. The orbit through $r=0$ is the isolated periodic orbit.  Figure \ref{point} depicts schematically the modification for a critical circle with arbitrary index.
\begin{center}
\begin{figure}[!ht]
	\begin{tikzpicture}[scale=1]
     \node[anchor=south west,inner sep=0] at (0,0) {\includegraphics[scale=0.35]{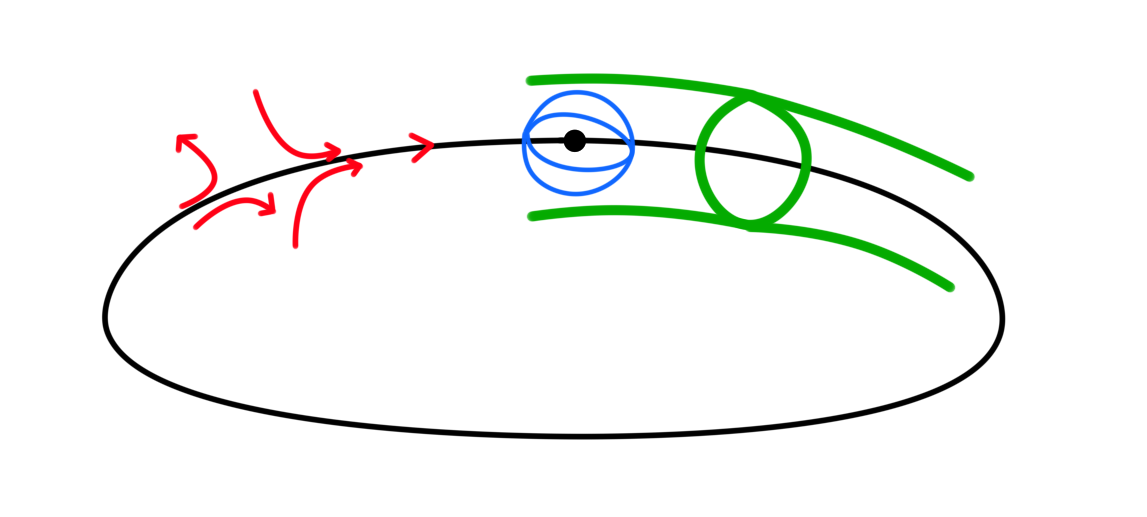}};
     \node[blue] at (5,5,0) {$\alpha|_U=dh$};
     \node[blue] at (5,4.4,0) {$X|_U=\partial_h$};
     \node[blue] at (4.5,3.2,0) {$U$};

   \end{tikzpicture}
\caption{Modification around point}
\label{point}
\end{figure}
\end{center}
In particular, condition (3) is satisfied for the closed orbit $\gamma$.

Doing this at every critical circle, we prove that the pair $(X,\alpha)$ satisfies the conditions (1)-(3), which proves the theorem.
\end{proof}

\begin{Remark}
Note that the constructed aperiodic Beltrami fields are furthermore irrotational. Since $(d\beta)^n\equiv 0$, their curl vanishes everywhere.
\end{Remark}

\end{document}